\theoremstyle{plain}
\newtheorem{theorem}{Theorem}[section]
\theoremstyle{plain}
\theoremstyle{plain}
\newtheorem{definition}[theorem]{Definition}
\theoremstyle{plain}
\newtheorem{lemma}[theorem]{Lemma}
\theoremstyle{remark}
\newtheorem{remark}[theorem]{Remark}
\theoremstyle{plain}
\newtheorem{proposition}[theorem]{Proposition}
\numberwithin{equation}{section}
\theoremstyle{plain}
\begin{document}
\title{Band width estimates with lower spectral curvature bounds}

\author{Xiaoxiang Chai}
\address{School of Mathematics and Statistics, Central China Normal University,
  Wuhan, 430079 P.R. China}
\address{Department of Mathematics, POSTECH, 77 Cheongam-Ro, Nam-Gu, Pohang, Gyeongbuk, Korea 37673}
\email{ xxchai@kias.re.kr }

\author{Yukai Sun}
\address{School of Mathematics and Statistics, Henan University, Kaifeng 475004 P. R. China and Center for Applied Mathematics of Henan Province, Henan University, Zhengzhou 450046 P. R. China}
\email{sunyukai@henu.edu.cn}

\begin{abstract}
  In this work, we use the warped \( \mu \)-bubble method to study the consequences of
  a spectral curvature bound. In particular, with a lower spectral Ricci curvature bound and
  a lower spectral scalar curvature bound, we show that the band width of a torical band is bounded above. We also obtain some rigidity results.
  \end{abstract}

\subjclass[2020]{53C24, 53C21}  
\keywords{Band width, torical band, warped \( \mu \)-bubble,
  scalar curvature rigidity, spectral curvature bound, Ricci curvature.}

\maketitle


\section{Introduction}




A Riemannian \textit{band} $(M^{n},g)$ is a compact connected orientable smooth manifold with a metric $g$ and a nonempty boundary $\partial M$ such that  \[\partial M=\partial_{-}M\cup \partial_{+}M, \quad\partial_{-}M \neq \emptyset,\quad \partial_{+}M \neq \emptyset, \quad\partial_{-}M\cap \partial_{+}M=\emptyset,\]
where \( \partial_- M  \) and \( \partial_+ M \) are unions of boundary components.
The \textit{width} of $(M,g)$ is defined as 
\[\operatorname{width}_{g}(M)=\operatorname{dist}_{g}(\partial_{+}M, \partial_{-}M),\]
where $\operatorname{dist}_{g}$ is the distance on $M$ with respect to the metric $g$.

Let $\gamma >0, \Lambda\in \mathbb{R}$ be two constants. On the Riemannian band $(M^{n},g)$, if there is a positive smooth
function $u$ on \( M \backslash \partial M \) with
\begin{equation}
    -\gamma \Delta_{g}u+\tfrac{1}{2}\operatorname{Sc}_{g}u \geq \Lambda u,\quad u|_{\partial M}=0\label{eqn: scalar-spectral}, 
\end{equation}
 where $\operatorname{Sc}_{g}$ is the scalar curvature of $g$,
then we say that \( (M,g) \) satisfies a lower \textit{spectral scalar curvature bound}. Similarly, if
\begin{equation}
  -\gamma \Delta_{g}u+2\operatorname{Ric}(x)u \geq \Lambda u,\quad u|_{\partial M}=0, \label{eqn: RIc-spectral}
\end{equation}
where
\begin{equation}
  \ensuremath{\operatorname{Ric}(x)}:= \inf_{e \in T_{x}M, e \neq 0}
  \ensuremath{\operatorname{Ric}} (e,e) |e|^{- 2} \label{eq def Ric} ,
\end{equation}
then we say that \( (M,g) \) satisfies a lower \textit{spectral Ricci curvature bound}.
We use the convention that if \( \operatorname{Ric}  \) is written without an argument or with only one argument, then it means \eqref{eq def Ric}, otherwise, it means the Ricci curvature tensor.

The spectral curvature bounds \eqref{eqn: scalar-spectral} and \eqref{eqn: RIc-spectral} are easily seen to be weaker than their pointwise counterparts. The effects of a positive scalar curvature bound on a band of the form 
$M^{n}=[-1,1]\times T^{n-1}$ with a metric $g$ were studied by Gromov \cite{G2018}. 
 Here, we denote by \( T^n \) the \( n \)-dimensional torus.
 The Riemannian band $([-1,1]\times T^{n-1}, g)$  is also called a \textit{torical band}, and we call $((-1,1)\times T^{n-1}, g)$ an \textit{open torical band}.
For the torical band, Gromov proved that if the scalar curvature is bounded below by $n(n-1)$, then the band width is bounded above by $\frac{2\pi}{n}$ for $2\leq n\leq 7$. Zhu \cite{zhu2021} proved that the width of a 3-dimensional torical band is less than or equal to $\frac{\pi}{2}$ 
when the Ricci curvature is bounded below by $2$, in particular, when the sectional curvature is bounded below by $1$.
There have been other interesting developments, such as \cite{hirsch-rigid-2025}, \cite{cecchini-scalar-2024}, and \cite{rade-scalar-2023}.


The spectral curvature bound, while weaker,
has found important applications in the stable Bernstein theorem \cite{chodosh-stable-2024-1, chodosh-stable-2024, mazet-stable-2024}; see also the works of Gilles-Christian \cite{GC2021}, Antonelli-Xu \cite{AX2024}, Antonelli-Pozetta-Xu \cite{antonelli-sharp-2024}, Hong-Wang \cite{hong-splitting-2025} and \cite{he-positive-2025} for various geometric results related to the spectral curvature bound.

A natural question is what effects \eqref{eqn: scalar-spectral} and \eqref{eqn: RIc-spectral} have on the band width of a torical band. This question was raised by Gromov \cite[Section 6.1.2]{Gromov2023} (vaguely, in Item 1), and via several methods, Hirsch-Kazaras-Khuri-Zhang \cite{hirsch-spectral-2024} proved a band width estimate of a torical band under a lower spectral scalar curvature bound.

In this work, we establish some band width estimates for torical bands using the warped \( \mu \)-bubble methods under the spectral Ricci curvature bound and the spectral scalar curvature bound. 
We also have a generalization of \cite{zhu2021} to the cases of negative and zero Ricci curvature bounds.
It is not the main theme of our article, so we put this result in Appendix \ref{sec ricci}.

\begin{theorem}\label{Thm-Ricci}
    For a Riemannian band $M^{3}=[-1,1]\times T^{2}$ with the metric $g$, let $u$ be a positive smooth function on $M\setminus \partial M$ with $u=0$ on $\partial M$ such that
    \begin{equation}
      -\gamma \Delta_{g} u+2\ensuremath{\operatorname{Ric}}u \geq  \Lambda u , \label{eq spectral ricci}
    \end{equation}
    where $\Lambda>0$, $0< \gamma<4$.  Then 
    \[\operatorname{width}_{g}(M)\leq \frac{2\pi}{\sqrt{\Lambda(4-\gamma)}}.\]
\end{theorem}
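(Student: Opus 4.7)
The plan is to argue by contradiction using a warped $\mu$-bubble argument à la Gromov/Cecchini-Zeidler, with warping constructed from the positive function $u$ supplied by \eqref{eq spectral ricci}.

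Suppose $\operatorname{width}_g(M) > 2\pi/\sqrt{\Lambda(4-\gamma)}$, set $c := \tfrac{1}{2}\sqrt{\Lambda(4-\gamma)}$, and pick $\ell \in (0, \pi/(2c))$ with $2\ell < \operatorname{width}_g(M)$. Choose a smooth profile $h\colon (-\ell, \ell) \to \mathbb{R}$ with $h(t) \to \mp \infty$ as $t \to \pm \ell$ and
\begin{equation*}
2h'(t) + h(t)^2 \leq -\Lambda(4-\gamma),
\end{equation*}
modeled on $h(t) = -2c\tan(ct)$; and a smoothed signed-distance function $\rho\colon M \to (-\ell,\ell)$ with $|\nabla \rho| \leq 1$. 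Consider the warped $\mu$-bubble functional
\begin{equation*}
\mathcal{A}(\Omega) = \int_{\partial^*\Omega \cap \mathring{M}} u^{\alpha} \, d\mathcal{H}^2 - \int_\Omega u^\alpha (h\circ\rho) \, d\mathcal{H}^3
\end{equation*}
on Caccioppoli sets $\Omega$ separating $\partial_- M$ from $\partial_+ M$, with $\alpha$ to be determined. The blow-up of $h$ keeps minimizers in the interior; by standard geometric measure theory plus regularity in dimension $3$, a minimizer exists with smooth reduced boundary $\Sigma$. Since $\Sigma$ is relatively homologous to $T^2$, at least one component $\Sigma_0$ satisfies $\chi(\Sigma_0) \leq 0$, hence $\int_{\Sigma_0} K \leq 0$ by Gauss-Bonnet.

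Next I would extract the Euler-Lagrange equation $H = h\circ\rho - \alpha \partial_\nu \log u$, and combine the second variation with the conformal substitution $\varphi = u^{-\alpha/2}\psi$ to obtain a stability inequality
\begin{equation*}
\int_{\Sigma_0} |\nabla^{\Sigma_0}\psi|^2 \geq \int_{\Sigma_0} Q\, \psi^2,
\end{equation*}
in which $Q$ collects $\operatorname{Ric}(\nu,\nu)$, $|A|^2$, $\partial_\nu h$, and derivatives of $u$ organized via the identity $\Delta^{\Sigma_0}\log u = \Delta_g \log u - \operatorname{Hess}(\log u)(\nu,\nu) - H \partial_\nu \log u$. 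Taking $\psi \equiv 1$ yields $\int_{\Sigma_0} Q \leq 0$. Combining this with the Gauss equation $2K_{\Sigma_0} = \operatorname{Sc}_g - 2\operatorname{Ric}(\nu,\nu) + H^2 - |A|^2$ and $\int_{\Sigma_0} K_{\Sigma_0} \leq 0$, substituting the Euler-Lagrange equation into $H^2$, and finally using the spectral Ricci bound \eqref{eq spectral ricci} inside the integral to trade $2\operatorname{Ric}(x)u$ for $\gamma \Delta_g u + \Lambda u$, one should reduce the combined estimate to
\begin{equation*}
0 \geq \int_{\Sigma_0} \bigl[c_1(\alpha,\gamma) u^{-2}|\nabla^{\Sigma_0} u|^2 + c_2(\alpha,\gamma)\bigl(|A|^2 - \tfrac{1}{2}H^2\bigr) + \tfrac{1}{2} h^2 + h' + \tfrac{\Lambda(4-\gamma)}{4}\bigr],
\end{equation*}
with coefficients $c_1, c_2$ depending on $\alpha, \gamma$. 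Choosing $\alpha$ so that $c_1, c_2 \geq 0$ and invoking the ODE inequality on $h$ forces the right-hand side to be strictly negative, the desired contradiction.

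The main obstacle is the algebraic bookkeeping: selecting $\alpha$ (I expect something like $\alpha = \gamma/2$ or a simple analogue) so that \emph{simultaneously} the coefficients $c_1, c_2$ are nonnegative and the cross-terms of the form $h \cdot \partial_\nu \log u$ produced by squaring the Euler-Lagrange equation are absorbed, yielding precisely the sharp constant $\Lambda(4-\gamma)/4$. The hypothesis $0 < \gamma < 4$ should appear at the step enforcing $c_1 \geq 0$, which is where the Cauchy-Schwarz inequality $|A|^2 \geq \tfrac{1}{2}H^2$ available for surfaces in $3$-manifolds is used; if $\gamma \geq 4$ this inequality reverses direction and the argument breaks, matching the theorem's constraint.
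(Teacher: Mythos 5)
Your overall strategy does match the paper's — a warped $\mu$-bubble with warping $u^\alpha$, a conformal substitution $\psi = u^{\alpha/2}\phi$ in the second variation, the Gauss equation, the spectral bound, Gauss--Bonnet on the (genus $\geq 1$) surface, and an ODE for the prescribing function $h$ — so the architecture is right. However, there are concrete errors that would prevent the details from closing, and they point to a misunderstanding of where the hypotheses bite. The warping exponent must be $\alpha = \gamma$, not $\gamma/2$: the second variation produces a term $\alpha u^{-1}\Delta_g u$ that must cancel \emph{exactly} against the $\gamma u^{-1}\Delta_g u$ released by the spectral bound $2\operatorname{Ric} \geq \gamma u^{-1}\Delta_g u + \Lambda$; the $\gamma/2$ only appears in the conformal substitution of the test function. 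More importantly, for the Ricci theorem you should use the Ricci form of the Gauss equation, $\operatorname{Ric}(\nu,\nu) + |A|^2 = \operatorname{Ric}(e_1,e_1) + \operatorname{Ric}(e_2,e_2) + H^2 - \operatorname{Sc}_\Sigma$, which eliminates $|A|^2$ exactly and produces precisely the quantity $\operatorname{Ric}(e_1,e_1) + \operatorname{Ric}(e_2,e_2) \geq 2\operatorname{Ric}$ that the spectral bound controls. Your scalar form $2K = \operatorname{Sc}_g - 2\operatorname{Ric}(\nu,\nu) + H^2 - |A|^2$ leaves a residual $|A|^2$ and an $\operatorname{Sc}_g$; bridging $\operatorname{Sc}_g$ to $\operatorname{Ric}$ via $\operatorname{Sc}_g = \operatorname{Ric}(\nu,\nu)+\operatorname{Ric}(e_1,e_1)+\operatorname{Ric}(e_2,e_2)$ just recovers the Ricci form anyway, and the $|A|^2 - \tfrac{1}{2}H^2$ term you introduce is not needed here (that Cauchy--Schwarz is what the paper uses in the \emph{scalar} theorem, not the Ricci one).

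Your explanation of why $0 < \gamma < 4$ is needed is also wrong: $|A|^2 \geq \tfrac{1}{2}H^2$ is always true and has nothing to do with $\gamma$. The constraint actually enters at (a) absorbing the cross term $\gamma\psi\langle\nabla_\Sigma w,\nabla_\Sigma\psi\rangle$ against $|\nabla_\Sigma\psi|^2$ and the already-present $(\tfrac{\gamma^2}{4}-\gamma)\psi^2|\nabla_\Sigma w|^2$, which yields the coefficient $\tfrac{4}{4-\gamma}$ on $\int|\nabla_\Sigma\psi|^2$ and requires $\gamma<4$; and (b) the completion of the square $\gamma w_\nu^2 - \gamma h w_\nu + h^2 = \gamma(w_\nu - \tfrac{h}{2})^2 + (1-\tfrac{\gamma}{4})h^2 \geq (1-\tfrac{\gamma}{4})h^2$, where the coefficient $1-\tfrac{\gamma}{4}$ must be positive. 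Finally, your ODE bookkeeping is inconsistent: with $h$ modeled on $\eta(t) = -\tfrac{2\sqrt{\Lambda}}{\sqrt{4-\gamma}}\tan\bigl(\tfrac{1}{2}\sqrt{(4-\gamma)\Lambda}\,t\bigr)$ solving $(1-\tfrac{\gamma}{4})\eta^2 + \eta' + \Lambda = 0$ \emph{exactly}, the contradiction comes from $(1-\tfrac{\gamma}{4})h^2 + h_\nu + \Lambda > 0$ thanks to $\operatorname{Lip}\zeta < 1$ strictly, and this integrates against Gauss--Bonnet to contradict $\chi(\Sigma) \leq 0$. Your proposed inequality $2h' + h^2 \leq -\Lambda(4-\gamma)$ together with the target $\tfrac{1}{2}h^2 + h' + \tfrac{\Lambda(4-\gamma)}{4}$ makes that bracket strictly \emph{negative}, which does not contradict $0 \geq \int[\dots]$.
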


By taking \( \gamma =0 \) and \( \Lambda = 4 \), we see that this band width estimate generalizes the band width estimate by Zhu \cite{zhu2021}.
Our proof strategy is by warped \( \mu \)-bubble methods. However,
the equality case of this band width estimate seems difficult to characterize by warped \( \mu \)-bubble methods.
Nonetheless, we have the following Ricci curvature rigidity.

\begin{theorem}\label{ricci rigid reformulation}
  Let \( (M = (-1,1)\times T^2,g) \) be an open torical band, and $\zeta$ be the coordinate representing the interval $(-1,1)$. Let \( \Lambda>0 \), \( 0< \gamma \leq 2\) and \( u \) satisfy \eqref{eq spectral ricci} and \( u \) tends to zero as \( \zeta \to \pm 1 \).
 If further $|\nabla \zeta| \leq 1/\ell_0$ with $\ell_0 = \pi/\sqrt{\Lambda(4-\gamma)}$,
     then 
     $(M^{3},g)$ is isometric to the band $((-\ell_0,\ell_0)\times T^{2}, g_{0})$ with
     \[g_{0}=dt^2+\phi_{1}(t)^2ds_{1}^2+\phi_{2}(t)^2ds_{2}^2, \text{ } t \in (-\ell_0, \ell_0)
     \]
     where \( s_1 \), \( s_2 \) are arc-length parameters of the circles, and
     \( \phi_{1} \), \( \phi_2 \) are positive functions given by 
\begin{align}
    \phi_{1}(t)&=\phi_{1}(0)\left(\cos(\sqrt{\Lambda(1-\tfrac{\gamma}{4})}t)\right)^{\tfrac{2-\gamma}{4-\gamma}} \exp \left ({\tfrac{\phi_{1}'(0)}{\phi_{1}(0)}\int_{0}^{t} \left(\cos(\sqrt{\Lambda(1-\tfrac{\gamma}{4})}s)\right)^{-\tfrac{1-\gamma/2}{1-\gamma/4}}ds}\right),\\
\phi_{2}(t)&=\phi_{2}(0)\left(\cos(\sqrt{\Lambda(1-\tfrac{\gamma}{4})}t)\right)^{\tfrac{2-\gamma}{4-\gamma}}\exp \left ({\tfrac{\phi_{2}'(0)}{\phi_{2}(0)}\int_{0}^{t} \left(\cos(\sqrt{\Lambda(1-\tfrac{\gamma}{4})}s)\right)^{-\tfrac{1-\gamma/2}{1-\gamma/4}}ds}\right ),
\end{align}
with \( \phi_{1}'(0) \phi_{1}(0)^{-1} = -  \phi_{2}'(0) \phi_{2}(0)^{-1}  \) and 
\( \tfrac{1}{2} (1 - \tfrac{\gamma}{2}) \Lambda \geq  2 (\tfrac{\phi_{1}'}{\phi_{1}} (0))^2  \).
\end{theorem}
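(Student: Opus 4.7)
The plan is to leverage the warped $\mu$-bubble argument behind Theorem~\ref{Thm-Ricci} and extract the rigidity from its equality case. The hypothesis $|\nabla\zeta| \leq 1/\ell_0$ implies, by integrating along any curve joining the two ends, that $\operatorname{dist}_g(\{\zeta=-1\},\{\zeta=1\}) \geq 2\ell_0$, which saturates the width bound $2\pi/\sqrt{\Lambda(4-\gamma)} = 2\ell_0$ of Theorem~\ref{Thm-Ricci}. This forces every inequality in the $\mu$-bubble argument to become an equality; in particular $|\nabla\zeta| = 1/\ell_0$ pointwise, so setting $t := \ell_0 \zeta$ produces a smooth signed distance function whose level sets $\Sigma_t := \{t=c\}$ foliate $M$ by 2-tori with $\partial_t = \nabla t$ a unit geodesic field.

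Next I would apply the warped $\mu$-bubble computation underlying Theorem~\ref{Thm-Ricci}, with $u$ as the conformal weight and with warping $h(t) = -2\alpha\lambda\tan(\lambda t)$, where $\lambda = \sqrt{\Lambda(1-\gamma/4)}$ and $\alpha = (1-\gamma/2)/(2-\gamma/8)$. Since every level set $\Sigma_t$ is now a $\mu$-bubble minimizer, all intermediate inequalities in that argument become pointwise equalities. I expect this to yield three things: (i) $u$ depends only on $t$, reducing \eqref{eq spectral ricci} to an explicit second-order ODE in $t$; (ii) $\operatorname{Ric}(\partial_t,\partial_t)$ attains the pointwise infimum $\operatorname{Ric}(x)$ at every point, so $\partial_t$ is a Ricci eigenvector with the smallest eigenvalue; (iii) the mean curvature of $\Sigma_t$ equals $h(t)$.

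Write $g = dt^2 + g_t$ on $(-\ell_0,\ell_0)\times T^2$ with second fundamental form $A_t = -\tfrac12 \partial_t g_t$. The main technical step---and the principal obstacle---is to promote the scalar data from the previous step (the mean curvature and the normal Ricci) to full diagonalization of $g_t$ in a fixed parallel frame on $T^2$. The equality forces the Ricci tensor to be block diagonal with respect to the splitting $T_xM = \mathbb{R}\partial_t \oplus T_x\Sigma_t$, which in 3D together with the traced Gauss equation shows that each $(\Sigma_t,g_t)$ has zero Gaussian curvature, hence is a flat 2-torus. The Codazzi equation for $A_t$ combined with this flatness then gives that the principal eigendirections of $A_t$ are parallel on $\Sigma_t$, and tracking them through the normal geodesic flow shows they form a $t$-independent parallel frame on $T^2 \cong \mathbb{R}^2/\Lambda$. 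Aligning a parallel coframe $\omega_1,\omega_2$ with these eigendirections yields $g_t = \phi_1(t)^2\omega_1^2 + \phi_2(t)^2\omega_2^2$.

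Finally, with the doubly-warped form in hand, the mean curvature condition reads $\phi_1'/\phi_1 + \phi_2'/\phi_2 = -2\alpha\lambda\tan(\lambda t)$, which integrates to $\phi_1\phi_2 \propto \cos(\lambda t)^{2\alpha}$. The equality in the normal Ricci component combined with the ODE for $u(t)$ decouples the system under the substitution $\psi_i = \log\phi_i$, giving $\psi_i'(t) = -\alpha\lambda\tan(\lambda t) + c_i\cos(\lambda t)^{-\beta}$ with $\beta = (1-\gamma/2)/(1-\gamma/4)$ and the constraint $c_1 + c_2 = 0$ (equivalent to $\Sigma_0$ being minimal, as forced by the first variation of the $\mu$-bubble at the midlevel). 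Integrating these ODEs produces exactly the stated formulas for $\phi_1,\phi_2$; the inequality $\tfrac12(1-\gamma/2)\Lambda \geq 2(\phi_1'(0)/\phi_1(0))^2$ emerges from requiring $u > 0$ and the metric to remain nondegenerate on all of $(-\ell_0,\ell_0)\times T^2$.
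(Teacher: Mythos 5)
Your computation of the doubly-warped metric from the equality data (steps three and four of your outline) is close in spirit to what the paper does in Appendix A, but the proposal has a fundamental logical gap in the first two steps that would sink the argument.

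You claim that because $|\nabla\zeta|\leq 1/\ell_0$ saturates the width bound, ``every inequality in the $\mu$-bubble argument becomes an equality; in particular $|\nabla\zeta|=1/\ell_0$ pointwise,'' and that every level set of $\zeta$ is then a $\mu$-bubble minimizer. But the proof of Theorem \ref{Thm-Ricci} is a contradiction argument launched under the assumption $\operatorname{width}_g(M)>2\ell_0$: when the width \emph{equals} $2\ell_0$ the contradiction never fires and no inequality anywhere is ever shown to be saturated, let alone pointwise over all of $M$. There is no ``equality case'' to extract from a contradiction proof. In fact nothing in the hypothesis guarantees that $|\nabla\zeta|=1/\ell_0$ everywhere, that $\zeta$ is smooth with nondegenerate gradient, or that an arbitrary level set of $\zeta$ is a critical point of the functional $E$. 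Producing even one stable critical point is already nontrivial: the paper (Lemma \ref{lm existence minimiser spectral ricci}, following Zhu) perturbs the prescribed curvature $\eta\mapsto\eta_\varepsilon$, solves the $\mu$-bubble problem for each $\varepsilon$, shows the resulting surfaces $\Sigma_\varepsilon$ cannot escape a fixed compact set $K$ via a Gauss--Bonnet obstruction, and then passes to a subsequential limit to get a single surface $\Sigma$ with $\nabla_\Sigma\zeta=0$.

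The second half of your foliation claim has the same problem. That a local foliation exists near $\Sigma$, that it consists of surfaces with constant $\tilde H=H+\gamma u^{-1}u_\nu-h$, and that these are \emph{all} minimizers so that the foliation extends across $M$ --- these are the content of Lemma \ref{Lem-foliation-construct} (inverse function theorem at the linearization $-\Delta_\Sigma$, which only becomes the linearization after infinitesimal rigidity is established in Lemma \ref{lm infi rig spec ric}), Proposition \ref{Prop-monotonic-formula} (a monotonicity inequality $\tfrac{d}{dt}(e^{\int\Psi}\tilde H)\leq 0$ built on Cauchy--Schwarz and Gauss--Bonnet), and Proposition \ref{Prop-minimiser}. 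None of these come for free from ``being in the equality case.'' Smaller issues: your constant $\alpha=(1-\gamma/2)/(2-\gamma/8)$ is the exponent on $\cos$, not the coefficient in $h$ (those differ for $\gamma\neq 0$), and the bound $\tfrac12(1-\tfrac{\gamma}{2})\Lambda\geq 2(\phi_1'(0)/\phi_1(0))^2$ comes from the Ricci comparison $\operatorname{Ric}(e_i,e_i)\leq\operatorname{Ric}(\partial_t,\partial_t)$ (see \eqref{eq ricci relation} and the end of the proof of Lemma \ref{lm calc spectral ricci}), not from positivity of $u$.
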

\begin{remark}
  It is interesting to note that when $2<\gamma < 4$, there is no band that achieves the optimal band width.
  \end{remark}

Now we state an estimate of the band width when the Ricci curvature is replaced by the scalar curvature.

\begin{theorem}\label{Thm-scalar}
    For a Riemannian band $M^{n}=[-1,1]\times T^{n-1}$ ($3\leq n\leq 7$) with the metric $g$, let $u$ be a positive smooth function on $M\setminus \partial M$ with $u=0$ on $\partial M$ such that
    \begin{equation} \label{eq spectral scalar}
      -\gamma \Delta_{g} u+\tfrac{1}{2}\operatorname{Sc}_{g}u \geq \Lambda u,
    \end{equation}
    where $\Lambda>0, 0< \gamma<\frac{2n}{n-1}$.  Then 
    \[\operatorname{width}_{g}(M)\leq \frac{\pi}{\sqrt{\frac{- n \gamma  + \gamma  + 2 n}{4(n-1)+2\gamma(2-n)}\Lambda}}.\]
\end{theorem}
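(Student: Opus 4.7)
The plan is to argue by contradiction via a warped $\mu$-bubble with weight $u^{\gamma}$. Suppose $\operatorname{width}_g(M) > \ell_0 := \pi/\sqrt{c\Lambda}$, where $c = (2n - \gamma(n-1))/[4(n-1) - 2\gamma(n-2)]$ is the constant from the theorem. Since the width strictly exceeds $\ell_0$, one can fix a smooth function $\rho \colon M \to (-\ell_0/2, \ell_0/2)$ with $\rho \to \pm \ell_0/2$ near $\partial_\pm M$ and $|\nabla \rho| < 1$ pointwise. Choose the prescribing function $h(\rho) = -\sqrt{\Lambda/c}\, \tan(\sqrt{c\Lambda}\, \rho)$; it diverges at the endpoints and satisfies the Riccati identity $h'(\rho) + c\, h(\rho)^2 + \Lambda = 0$ on $(-\ell_0/2, \ell_0/2)$.

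Minimize the warped $\mu$-bubble functional
\[
\mathcal{A}(\Omega) = \int_{\partial^* \Omega \setminus \partial M} u^\gamma\, dA - \int_M (\chi_\Omega - \chi_{\Omega_0})\, h\, u^\gamma\, dV
\]
among Caccioppoli sets $\Omega$ with $\partial_- M \subset \Omega$ and $\Omega \cap \partial_+ M = \emptyset$. In the range $3 \leq n \leq 7$, the standard existence and regularity theory for warped $\mu$-bubbles produces a smooth minimizer $\Sigma = \partial^*\Omega$ in the interior. The first variation yields $H_\Sigma = h - \gamma\, u_\nu/u$ on $\Sigma$. The specific weight exponent $p = \gamma/2$ is what cancels the $\operatorname{Hess}(\log u)(\nu,\nu)$ contribution in the second variation; combining stability with the Gauss identity $\operatorname{Ric}(\nu,\nu) = \tfrac{1}{2}(\operatorname{Sc} - \operatorname{Sc}_\Sigma + H^2 - |A|^2)$ and the spectral inequality \eqref{eq spectral scalar}, and substituting the first-variation expression for $H$, produces for every $\phi \in C^\infty(\Sigma)$
\[
\int_\Sigma |\nabla_\Sigma \phi|^2\, u^\gamma\, dA \geq \int_\Sigma \Bigl[\tfrac{1}{2}|A|^2 + \Lambda + \tfrac{1}{2}h^2 + \tfrac{\gamma(2-\gamma)}{2}\bigl(u_\nu/u\bigr)^2 + \gamma\, \tfrac{\Delta_\Sigma u}{u} - \tfrac{1}{2}\operatorname{Sc}_\Sigma + \partial_\nu h\Bigr]\phi^2\, u^\gamma\, dA.
\]

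Test with $\phi = u^{-\gamma/2}$, so that $\phi^2 u^\gamma = 1$; the left-hand side reduces to $(\gamma^2/4)\int_\Sigma |\nabla_\Sigma u|^2/u^2\, dA$, while the term $\gamma \int \Delta_\Sigma u/u\, dA$ simplifies via $\int \Delta_\Sigma \log u\, dA = 0$ and $\Delta_\Sigma u/u = \Delta_\Sigma \log u + |\nabla_\Sigma u|^2/u^2$. The net coefficient $\gamma(\gamma-4)/4$ of $\int |\nabla_\Sigma u|^2/u^2$ is nonpositive since $\gamma < 2n/(n-1) < 4$ and may therefore be discarded. The key algebraic step is then to use $|A|^2 \geq H^2/(n-1)$ together with $H = h - \gamma u_\nu/u$ and complete the square in the two scalar variables $(h, u_\nu/u)$: the minimum eigenvalue of the resulting positive semidefinite quadratic form in $u_\nu/u$ is exactly $c$, giving the pointwise bound $\tfrac{1}{2}|A|^2 + \tfrac{1}{2}h^2 + \tfrac{\gamma(2-\gamma)}{2}(u_\nu/u)^2 \geq c\, h^2$. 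Inserting this bound leads to
\[
\tfrac{1}{2}\int_\Sigma \operatorname{Sc}_\Sigma\, dA \geq \int_\Sigma (\Lambda + c\, h^2 + \partial_\nu h)\, dA,
\]
and the Riccati identity for $h$ together with $|\nabla \rho| < 1$ imply the right-hand side is strictly positive, so $\int_\Sigma \operatorname{Sc}_\Sigma\, dA > 0$.

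In the base case $n = 3$, stability (and hence the displayed inequality) applies component by component, so on a connected component $\Sigma_0$ of $\Sigma$ whose class in $H_2(M;\mathbb{Z}) \cong \mathbb{Z}$ is nonzero one has $\int_{\Sigma_0}\operatorname{Sc}_{\Sigma_0}\, dA > 0$; but $\Sigma_0$ admits a degree-$\neq 0$ map to $T^2$ and is therefore of genus at least one, so $\chi(\Sigma_0) \leq 0$, contradicting Gauss-Bonnet. For $3 < n \leq 7$ one instead reinterprets the rearranged stability inequality as a spectral-scalar-curvature-type bound on the $(n-1)$-dimensional torical slice inherited from $\Sigma$, with suitably updated parameters, and iterates the warped $\mu$-bubble construction to descend one dimension at a time until the two-dimensional Gauss-Bonnet step applies. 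The main obstacle I anticipate is the algebraic bookkeeping behind the quadratic-form reduction: the precise numerator $2n - \gamma(n-1)$ and denominator $4(n-1) - 2\gamma(n-2)$ in $c$ only emerge when $|A|^2 \geq H^2/(n-1)$, the first-variation substitution for $H$, and the quadratic minimization in $u_\nu/u$ are coordinated in exactly the right way, and tracking the induction parameters in the descending step in higher dimensions requires equal care to avoid sacrificing the optimal constant.
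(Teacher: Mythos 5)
Your proof reproduces the paper's argument up to and including the contradiction for \(n = 3\): the warped \(\mu\)-bubble with weight \(u^{\gamma}\), the first-variation formula \(H = h - \gamma u_{\nu}/u\), the Schoen--Yau rewrite of the Gauss equation, the substitution of the spectral inequality \eqref{eq spectral scalar}, and the completion of the square in \(u_\nu/u\) that produces the precise constant \(c = (2n - \gamma(n-1))/[4(n-1) - 2\gamma(n-2)]\) -- all of this is the same route as the paper (once your stability inequality is corrected to include the cross term \(-\gamma\int u^{-1}\langle\nabla_\Sigma u,\nabla_\Sigma\phi\rangle u^\gamma\phi\), which in any case vanishes after the substitution \(\psi=\phi u^{\gamma/2}=1\)).

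The genuine gap is the case \(3 < n \le 7\). By testing with the single function \(\phi = u^{-\gamma/2}\) (equivalently \(\psi \equiv 1\)) you reduce everything to the scalar inequality \(\tfrac12\int_\Sigma \operatorname{Sc}_\Sigma > 0\). For \(n=3\) this is enough, via Gauss--Bonnet, but for \(n>3\) the total scalar curvature integral carries no topological content. You then propose to ``iterate the warped \(\mu\)-bubble construction to descend one dimension at a time,'' but this cannot be run as described: \(\Sigma = \partial\Omega\) is a \emph{closed} \((n-1)\)-manifold, not a band with two boundary components, so there is no width to estimate and no place to impose the boundary behavior required by Lemma \ref{lem-warped-bubble}. (There is a Schoen--Yau-style homological minimization descent for PSC obstructions on closed manifolds, but it is a different construction with different bookkeeping, and nothing in your sketch supplies the needed warped-factor propagation between steps.) The paper avoids this issue by \emph{not} fixing the test function: after the Cauchy--Schwarz step it retains the full operator inequality
\[
\frac{4}{4-\gamma}\int_\Sigma |\nabla_\Sigma\psi|^2 + \frac{1}{2}\int_\Sigma \operatorname{Sc}_\Sigma\,\psi^2 > 0 \quad\text{for all }\psi,
\]
i.e.\ positivity of \(-\frac{8}{4-\gamma}\Delta_\Sigma + \operatorname{Sc}_\Sigma\), and then observes that \(\frac{4(n-2)}{n-3} > \frac{8}{4-\gamma}\) in the admissible range of \(\gamma\), so the conformal Laplacian of \(\Sigma\) is also positive. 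A conformal change then produces a PSC metric on \(\Sigma\), which is impossible since \(\Sigma\) admits a nonzero-degree map to \(T^{n-1}\). Your choice \(\psi=1\) discards precisely the \(|\nabla\psi|^2\) term that makes this conformal-Laplacian argument available, so to repair the proof you should keep \(\psi\) arbitrary rather than specialize it.
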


As mentioned earlier, the above band width estimate was proven by Hirsch-Kazaras-Khuri-Zhang \cite{hirsch-spectral-2024} via three different methods.
In particular, in three dimensions, they showed full rigidity (see \cite[Theorem 1.1]{hirsch-spectral-2024}) via spacetime harmonic functions.
And, in dimensions \( 3 \le n \le 7 \) they showed for the case \( \gamma= 1\) (see \cite[Theorem 1.3]{hirsch-spectral-2024}) using the warped \( \mu \)-bubble method. Here, we use the \( \mu \)-bubble method as well, but with a wider range of \( \gamma \), moreover, we have the following scalar curvature rigidity.

\begin{theorem}\label{thm rigid scalar}
  For an open Riemannian band $M^{n}=(-1,1)\times T^{n-1}$ (\( 3\leq n\leq 7 \))
  with the metric $g$, let $\zeta$ represents the coordinate of the interval $(-1,1)$. Let $u$ be a positive smooth function on $M$ such that
    \[-\gamma \Delta_{g} u+\tfrac{1}{2}\operatorname{Sc}_{g}u\geq\Lambda u,\]
    and \( u \to 0 \) as \( \zeta \to \pm 1 \). Here
    $\Lambda>0, 0< \gamma<\frac{2n}{n-1}$. We define
    \[ \ell_{1} = \frac{\pi}{2\sqrt{\frac{- n \gamma  + \gamma  + 2 n}{4(n-1)+2\gamma(2-n)}\Lambda}}\]
    to be half of the band width in Theorem \ref{Thm-scalar}.
    If further $|\nabla \zeta| \leq 1/\ell_{1}$, 
 then the band $(M,g)$ is isometric to $(-\ell_{1},\ell_{1})\times T^{n-1}$ with the metric $g=dt^2+\xi^2(t)g_{T^{n-1}}$
 where \( t \in (-\ell_1, \ell_1) \),
 \[\xi (t) = \left(\cos (\sqrt{\tfrac{\Lambda (- n \gamma + \gamma + 2 n)}{4 (n -
   1) + 2 \gamma (2 - n)} }\; t)\right)^{ 2 (2 - \gamma) (- n\gamma + \gamma + 2 n)^{- 1}}, \]
and \( g_{T^{n-1}} \) is a flat metric on the torus \( T^{n-1} \).
\end{theorem}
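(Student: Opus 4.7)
The plan is to sandwich the width of $M$ between the upper bound from Theorem \ref{Thm-scalar} and the lower bound forced by $|\nabla\zeta|\le 1/\ell_1$, and then to extract the warped-product structure from the equality case of the warped $\mu$-bubble argument that proved Theorem \ref{Thm-scalar}.

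First I would observe that the Lipschitz bound $|\nabla\zeta|\le 1/\ell_1$ implies $\operatorname{dist}_g(\{\zeta=0\},\{\zeta=\pm 1\})\ge \ell_1$, so $\operatorname{width}_g(M)\ge 2\ell_1$. On the other hand, applying Theorem \ref{Thm-scalar} to the compact exhaustion $M_\epsilon=[-1+\epsilon,1-\epsilon]\times T^{n-1}$ yields $\operatorname{width}_g(M_\epsilon)\le 2\ell_1$ in the limit $\epsilon\to 0$; the Dirichlet condition on $\partial M_\epsilon$ is available since $u\to 0$ at the ends and a small truncation of $u$ provides an admissible spectral function on $M_\epsilon$. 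Consequently $\operatorname{width}_g(M)=2\ell_1$ and the inequality of Theorem \ref{Thm-scalar} is saturated.

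Next I would rerun the warped $\mu$-bubble construction with the same warping function $h$ that produced the sharp bound. Because the width equals the critical value, a minimizer $\Sigma$ of the weighted $\mu$-bubble functional exists in the interior and every inequality entering the stability argument becomes an equality. This forces $\Sigma$ to be totally geodesic, the lapse to coincide with the prescribed ODE solution, and the intrinsic scalar curvature of $\Sigma$ to vanish; since $\Sigma$ is diffeomorphic to $T^{n-1}$ with $n-1\le 6$, the Schoen--Yau torus rigidity upgrades scalar-flatness to flatness of $(\Sigma, g|_\Sigma)$. Shifting the prescribed mean curvature then produces a smooth foliation $\{\Sigma_t\}_{t\in(-\ell_1,\ell_1)}$ by flat tori. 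The normal lapse $\xi(t)$ of this foliation satisfies a first-order ODE coming from the equality case of the warped $\mu$-bubble equation, whose positive solution is unique up to scale; integration gives
\[
\xi(t)=\left(\cos\left(\sqrt{\tfrac{\Lambda(2n-(n-1)\gamma)}{4(n-1)+2\gamma(2-n)}}\,t\right)\right)^{2(2-\gamma)/(2n-(n-1)\gamma)},
\]
which is the claimed warping, and the global metric assumes the form $dt^2+\xi(t)^2 g_{T^{n-1}}$ on $(-\ell_1,\ell_1)\times T^{n-1}$.

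The main obstacle I anticipate is promoting the rigid single $\mu$-bubble to a genuinely smooth foliation whose leaves all saturate their own stability inequalities. I plan to handle this by perturbing the prescribed mean curvature, invoking an implicit-function argument for existence and smooth dependence of nearby minimizers, and then showing that saturation propagates from the central slice because the kernel of the equality-case stability operator is one-dimensional (the prescribed lapse is the unique positive first eigenfunction). Once the foliation is established, identification of $\xi(t)$ with the explicit cosine-power reduces to unwinding the exponents in the ODE, which parallels the computation carried out in the sharp case of Theorem \ref{Thm-scalar}.
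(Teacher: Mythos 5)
Your high-level outline (existence of a rigid interior leaf, foliation, ODE for the warping function) matches the paper's strategy, and your final formula for $\xi(t)$ is identical to the paper's after noting $2n-(n-1)\gamma = -n\gamma+\gamma+2n$. However, there is a genuine gap at the crucial step, and a secondary one.

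\textbf{The main gap: existence of a useful minimiser.} You assert that ``because the width equals the critical value, a minimizer $\Sigma$ of the weighted $\mu$-bubble functional exists in the interior and every inequality entering the stability argument becomes an equality.'' This does not follow. If you set $h = \eta\circ\zeta$ with $\eta$ the exact solution of the ODE, then the quantity $Q = \tfrac{-n\gamma+\gamma+2n}{4(n-1)+2\gamma(2-n)}h^2 + h_\nu + \Lambda$ is merely $\geq 0$ everywhere (equal to zero only where $\langle\nabla\zeta,\nu\rangle=1$); it is not strictly positive anywhere, so there is no a priori location control on the $\mu$-bubble. Moreover $M$ is open, so the compact existence theorem (Lemma~\ref{lem-warped-bubble}) does not apply directly, and a minimizing sequence could degenerate towards a boundary component. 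The paper resolves both issues simultaneously in Lemma~\ref{scalar stable existence} by perturbing $\eta$ to $\eta_\varepsilon(t)=\eta(t+\varepsilon\alpha(t))$ with an odd ``bump'' $\alpha$: this makes the blow-up locus $\pm T_\varepsilon$ strictly interior (so one can work on a compact sub-band), and it makes $Q_\varepsilon > 0$ on the region $\{\tfrac{\ell_1}{2}<|\zeta|<T_\varepsilon\}$, which—combined with the conformal-change argument and torus topology—forces $\Sigma_\varepsilon$ to intersect the fixed compact set $K=\{|\zeta|\le\tfrac{\ell_1}{2}\}$. One then uses curvature estimates to extract a convergent subsequence with a non-trivial limit, normalises the first eigenfunctions of the conformal operators so the limit metric exists, and invokes Zhu's almost-rigidity (\cite[Proposition~3.2]{zhu-rigidity-2020}) to obtain Ricci-flatness of the limit. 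Width saturation alone, as in your first paragraph, gives none of this. (That first paragraph is in fact an unnecessary detour: the paper never proves $\operatorname{width}_g(M)=2\ell_1$ in advance—it emerges at the end from the isometry to the model.)

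\textbf{A secondary gap: propagation of rigidity along the foliation.} You propose that ``saturation propagates from the central slice because the kernel of the equality-case stability operator is one-dimensional.'' The paper instead constructs the foliation of constant $\tilde H = H+\gamma u^{-1}u_\nu - h$ by the inverse function theorem (Lemma~\ref{lm scalar foliation}, possible because the linearisation reduces to $-\Delta_\Sigma$), and then proves an ordinary differential inequality $\tilde H' + \Phi(t)\tilde H \le \phi_t L(t)$ with the integral of $\phi_t L(t)$ non-positive (Lemma~\ref{lm scalar foliation sign}, established by another application of the conformal/torus argument). This sign of $\tilde H_t$ is then fed into the first variation of $E$ to show every $\Omega_t$ is a minimiser, whence every leaf is infinitesimally rigid. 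Your uniqueness-of-eigenfunction heuristic does not, by itself, produce this monotonicity or the minimising property of neighboring leaves. (Also, the leaves are umbilic, not totally geodesic: $A^0=0$ but $H = -\gamma u^{-1}u_\nu + h \ne 0$ in general.)
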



\

The article is organized as follows:

In Section \ref{sec basic}, we introduce our main technical tool warped \( \mu \)-bubble and recall its basics.
In Section \ref{sec spectral ricci}, we show Theorems \ref{Thm-Ricci} and \ref{ricci rigid reformulation}.
The determination of the metric in Theorem \ref{ricci rigid reformulation} is deferred to Appendix \ref{subsec-determine-the-metric}.
In Section \ref{sec spectral scalar}, we show Theorems \ref{Thm-scalar} and \ref{thm rigid scalar}.
In Appendix \ref{sec ricci}, we prove a band width estimate under the pointwise Ricci curvature bound for a 3-dimensional torical band.

\

{\textbf{Acknowledgment.}} X.C. has been partially supported by the National
Research Foundation of Korea (NRF) grant funded by the Korea government (MSIT)
(No. RS-2024-00337418) and an NRF grant No. 2022R1C1C1013511. Y.S. has been partially funded by the National Key R\&D Program of China Grant 2020YFA0712800.

\

\section{Basics of warped \texorpdfstring{$\mu$}{mu}-bubble} \label{sec basic}

In this section, we collect the basics of the warped \( \mu \)-bubble including the first and second variations, and the existence theorem.

\subsection{First and second variations of warped \texorpdfstring{$\mu$}{mu}-bubble}

On the Riemannian band $(M^{n},g)$, let $h$ be a smooth function on the interior of $M$ or on $M$. We denote the interior of $M$ by $\overset{\circ}{M}$. Choose a Caccioppoli set ${\Omega}_{0}\subset M$ with boundary $\partial {\Omega}_{0}\subset \overset{\circ}{M}$ and $\partial_{-}M\subset {\Omega}_{0}$. Let $\partial^{\ast} \Omega$ denote the reduced boundary of the Caccioppoli set $\Omega$ and $u$ be a positive smooth function on $M$. We consider the following functional 
\begin{equation}\label{eqn-omega}
   E (\Omega) = \int_{\partial^{\ast} \Omega} u^{\gamma}d\mathcal{H}^{n-1} - \int_{\Omega} (\chi_{\Omega}
   - \chi_{{\Omega}_{0}}) h u^{\gamma}d\mathcal{H}^{n} 
\end{equation}
for $\Omega\in \mathcal{C}$, where $\mathcal{C}$ is defined as
\[\mathcal{C}=\{\Omega:\text{ all Caccioppoli sets } \Omega \subset M \text{ and }\Omega\triangle {\Omega}_{0}\Subset \overset{\circ}{M}\}, \]
and $\mathcal{H}^{n}$ denotes $n$-dimensional Hausdorff measure. We usually omit the measure $\mathcal{H}^{n}$ in the integral. Then the warped $\mu$-bubbles with the warped function $u^{\gamma}$ are critical points of the functional $E(\Omega)$, which when $u\equiv 1$ or $\gamma=0$, becomes the usual $\mu$-bubble. We also call it a warped \( h\)-bubble when an emphasis on \( h \) is needed (see {\cite[Section 1.6.4]{Gromov2023}}). When we are in the settings of Theorems \ref{ricci rigid reformulation} and \ref{thm rigid scalar}, we just set \( \Omega_0 = \zeta ^{-1} ([-\tfrac{1}{2},\tfrac{1}{2}]) \). 

\begin{lemma}[First variation of warped $\mu$-bubbles]\label{lem-first-variation}
Let $\Omega_{t}$ be a smooth $1$-parameter family of regions in $\mathcal{C}$ with $\Omega_{0}=\Omega$, then $\partial\Omega_t$ are variations of $\partial \Omega$ given by some normal speed $\phi\in C^{\infty}(\partial \Omega)$ and
\begin{equation} \label{eqn-first-variation}
    \tfrac{\mathrm{d}}{\mathrm{d} t} E (\Omega_t) |_{t = 0} =
   \int_{\partial \Omega} (H + \gamma u^{- 1} u_{\nu} - h) u^{\gamma} \phi ,
\end{equation}
 where $H$ is the mean curvature of $\partial \Omega$ and $\nu$ is the outwards pointing unit normal vector on $\partial \Omega$, $u_{\nu}=\nabla^{g}_{\nu}u$. In particular, a warped $\mu$-bubble $\Omega$ satisfies
  \[H=-\gamma u^{- 1} u_{\nu} + h,\]
  along the boundary $\partial \Omega$.
\end{lemma}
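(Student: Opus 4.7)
The plan is to differentiate the two summands of $E(\Omega_t)$ from \eqref{eqn-omega} separately using two standard first variation formulas, then add the results.

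First, for the weighted perimeter $\int_{\partial^*\Omega_t}u^\gamma$, I would use the general formula for a smooth weighted hypersurface area $\int_{\partial\Omega_t} f\,d\mathcal{H}^{n-1}$ under a normal deformation with speed $\phi$,
\[
\tfrac{d}{dt}\int_{\partial\Omega_t} f\,d\mathcal{H}^{n-1}\Big|_{t=0} = \int_{\partial\Omega}\bigl(Hf + \nabla_\nu f\bigr)\phi\,d\mathcal{H}^{n-1}.
\]
This is the usual first variation of area (producing the mean curvature term $Hf$) combined with the rate of change $\partial_t f = \phi\,\nabla_\nu f$ of the integrand along the flow; one derivation is via $\tfrac{d}{dt}\int_{\partial\Omega_t} f = \int_{\partial\Omega}\operatorname{div}_{\partial\Omega}(\phi f\nu)$. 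Specializing $f = u^\gamma$ and using $\nabla_\nu u^\gamma = \gamma u^{\gamma-1}u_\nu$ yields the contribution $\int_{\partial\Omega}(Hu^\gamma + \gamma u^{\gamma-1}u_\nu)\phi$.

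Second, for the bulk term, I would note that since $\Omega_t \triangle \bar\Omega \Subset \overset{\circ}{M}$ the integrand $(\chi_{\Omega_t} - \chi_{\bar\Omega})hu^\gamma$ is supported in a compact set inside $\overset{\circ}{M}$, so the integral is finite and computes a signed relative $hu^\gamma$-volume between $\Omega_t$ and $\bar\Omega$. Applying the standard identity
\[
\tfrac{d}{dt}\int_{\Omega_t} F\,d\mathcal{H}^n\Big|_{t=0} = \int_{\partial\Omega} F\phi\,d\mathcal{H}^{n-1}
\]
with $F = hu^\gamma$ (and $\bar\Omega$ fixed in $t$) gives the contribution $-\int_{\partial\Omega}hu^\gamma\phi$ to $\tfrac{d}{dt}E(\Omega_t)|_{t=0}$. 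The sign is handled automatically, independent of whether $\partial\Omega$ enters or exits $\bar\Omega$ locally, which is precisely why the integrand is written as $\chi_\Omega - \chi_{\bar\Omega}$.

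Summing the two contributions gives \eqref{eqn-first-variation}, and since $\phi$ is an arbitrary smooth normal variation, a warped $\mu$-bubble $\Omega$ must satisfy $H + \gamma u^{-1}u_\nu - h = 0$ on $\partial\Omega$, i.e.\ $H = -\gamma u^{-1}u_\nu + h$. The computation is essentially routine; the only mild obstacle is to track the sign in the bulk term consistently with the outward normal orientation and to interpret the $\chi_\Omega - \chi_{\bar\Omega}$ bookkeeping so that the two terms of $E$ can be differentiated in a unified way even when $\Omega$ lies partly inside and partly outside $\bar\Omega$.
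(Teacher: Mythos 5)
Your computation is correct and is the standard first-variation argument; the paper states Lemma~\ref{lem-first-variation} without proof, so there is no authorial derivation to compare against, but your decomposition into the weighted perimeter term and the relative $h u^\gamma$-volume term, together with the two classical first-variation identities, is exactly how one would prove it.

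One small inaccuracy in a parenthetical: the claimed identity $\tfrac{d}{dt}\int_{\partial\Omega_t} f = \int_{\partial\Omega}\operatorname{div}_{\partial\Omega}(\phi f\nu)$ is not quite right. Since $\nu$ is orthogonal to $T\partial\Omega$, the \emph{tangential} divergence $\operatorname{div}_{\partial\Omega}(\phi f\nu)$ equals $\phi f\, \operatorname{div}_{\partial\Omega}\nu = \phi f H$ and loses the term $\phi\,\nabla_\nu f$ that records the change of the weight across $\partial\Omega$. The correct general identity (which you do then apply correctly) is $\tfrac{d}{dt}\int_{\partial\Omega_t} f = \int_{\partial\Omega}\bigl(X(f) + f\operatorname{div}_{\partial\Omega}X\bigr)$, which for $X=\phi\nu$ gives $\int_{\partial\Omega}(\phi\nabla_\nu f + f\phi H)$. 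This aside does not affect your main computation or conclusion, which are correct as stated.
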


\begin{lemma}[Second variation of warped $\mu$-bubbles]\label{lem-second-variation}
Consider a warped $\mu$-bubble $\Omega$ with $\partial\Omega=\Sigma$. Assume that $\Omega_{t}$ is a smooth $1$-parameter family of smooth regions in $\mathcal{C}$ with $\Omega_{0}=\Omega$, then $\partial\Omega_t$ are variations of $\partial \Omega$ given by some normal speed $\phi\in C^{\infty}(\partial \Omega)$ and
\begin{align}
  \tfrac{\mathrm{d}^2}{\mathrm{d} t^2} E (\Omega_t) |_{t = 0}
= & \int_{\partial \Omega} [- \Delta_{\partial \Omega} \phi - |A|^2 \phi
-\ensuremath{\operatorname{Ric}}(\nu, \nu) \phi - \gamma u^{- 2} u_{\nu}^2
\phi \\
& \qquad + \gamma u^{- 1} \phi (\Delta u - \Delta_{\partial \Omega} u - H
u_{\nu}) - \gamma u^{- 1} \langle \nabla_{\partial \Omega} u,
\nabla_{\partial \Omega} \phi \rangle - h_{\nu} \phi] u^{\gamma} \phi .
\label{weighted stability}
\end{align}
\end{lemma}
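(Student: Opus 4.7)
The plan is to differentiate the first variation identity \eqref{eqn-first-variation} one more time at $t=0$ and to exploit the Euler-Lagrange equation $H + \gamma u^{-1} u_\nu - h = 0$ satisfied by the warped $\mu$-bubble $\Omega$ to drop almost all of the resulting terms. Writing
\[
\tfrac{\mathrm{d}}{\mathrm{d}t} E(\Omega_t) = \int_{\partial \Omega_t} Q_t\, \phi_t\, u^{\gamma}\, d\mathcal{H}^{n-1},\qquad Q_t := H_t + \gamma u^{-1} u_{\nu_t} - h,
\]
the vanishing of $Q_0$ on $\Omega$ forces every contribution in which the $t$-derivative does not fall on $Q_t$ itself to vanish at $t=0$, so
\[
\tfrac{\mathrm{d}^2}{\mathrm{d}t^2} E(\Omega_t)\big|_{t=0} = \int_{\partial \Omega} \dot Q\, \phi\, u^{\gamma}\, d\mathcal{H}^{n-1},
\]
with $\dot Q = \dot H + \gamma \dot{(u^{-1})} u_\nu + \gamma u^{-1} \dot u_{\nu_t} - \dot h$. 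This is the decisive simplification: I never need to differentiate the weight $u^{\gamma}$, the area measure, or the lapse $\phi_t$.

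Next I would compute the four pieces of $\dot Q$ individually. For $\dot H$, I invoke the classical normal-variation identity $\dot H = -\Delta_{\partial \Omega} \phi - (|A|^2 + \operatorname{Ric}(\nu,\nu))\phi$. Because $u$ and $h$ are fixed functions on $M$, the chain rule along the flow directly gives $\dot{(u^{-1})}|_{t=0} = -u^{-2} u_\nu \phi$ and $\dot h|_{t=0} = h_\nu \phi$. The only genuine computation is that of $\dot u_{\nu_t}$: using $u_{\nu_t} = \langle \nabla u, \nu_t\rangle$ and the standard rule $\dot \nu|_{t=0} = -\nabla_{\partial \Omega}\phi$ for a normal variation, one gets
\[
\dot u_{\nu_t}\big|_{t=0} = \operatorname{Hess}_u(\nu,\nu)\phi - \langle \nabla_{\partial \Omega} u, \nabla_{\partial \Omega}\phi\rangle,
\]
after using that $\nabla_{\partial \Omega}\phi$ is tangent to $\partial \Omega$ to kill the normal component of $\nabla u$.

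The last technical manoeuvre, and the main bookkeeping hurdle, is to eliminate $\operatorname{Hess}_u(\nu,\nu)$ in favour of the Laplacians appearing on the right-hand side of \eqref{weighted stability}. This is done by decomposing the ambient Laplacian along the hypersurface, $\Delta u = \Delta_{\partial \Omega} u + \operatorname{Hess}_u(\nu,\nu) + H u_\nu$, converting $\operatorname{Hess}_u(\nu,\nu)$ into $\Delta u - \Delta_{\partial \Omega} u - H u_\nu$; this is precisely the combination grouped together in the stated formula. Substituting the four pieces into $\dot Q$, multiplying by $\phi u^{\gamma}$, and integrating over $\partial \Omega$ then reproduces \eqref{weighted stability}. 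The principal risk is simply sign-keeping: the outward normal convention, the sign of $\dot \nu = -\nabla_{\partial \Omega}\phi$, and the hypersurface decomposition of $\Delta u$ must be synchronised so that the $Hu_\nu$ contribution ends up inside the bracket with exactly the displayed sign.
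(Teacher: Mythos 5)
Your proposal is correct and is the standard computation that the paper leaves implicit (Lemma~\ref{lem-second-variation} is stated without proof). The key observations are all in place: since $\Omega$ is a critical point, $Q_0 \equiv 0$, so differentiating $\tfrac{\mathrm{d}}{\mathrm{d}t}E(\Omega_t)=\int_{\partial\Omega_t}Q_t\,\phi_t\,u^\gamma$ a second time only picks up $\int_{\partial\Omega}\dot Q\,\phi\,u^\gamma$; the Jacobi-type identity $\dot H=-\Delta_{\partial\Omega}\phi-(|A|^2+\operatorname{Ric}(\nu,\nu))\phi$, the formula $\dot\nu=-\nabla_{\partial\Omega}\phi$, and the decomposition $\Delta u=\Delta_{\partial\Omega}u+\operatorname{Hess}_u(\nu,\nu)+Hu_\nu$ (which holds with the paper's convention $H=\operatorname{div}_{\partial\Omega}\nu$) are all applied with the correct signs and reassemble exactly into \eqref{weighted stability}.
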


\begin{remark}\label{rk stable cmp minimiser}
  If $\tfrac{\mathrm{d}^2}{\mathrm{d} t^2} E (\Omega_t) |_{t = 0}\geq 0$, we say that the warped $\mu$-bubble \( \Omega \) is \textit{stable}.
  And we also call \( \partial \Omega \) a stable hypersurface of prescribed mean curvature \( - \gamma u^{-1} u_{\nu} +h \).
  Note that a stable warped \( \mu\)-bubble \( \Omega \) is not necessarily a minimiser to the functional \( E \). For convenience, we call \( \partial \Omega \) a \textit{minimising boundary} of prescribed mean curvature \( -\gamma u^{-1} u_{\nu} + h \)
  if \( \Omega \) is a minimiser to \( E \). We often omit the references to \( u \) and \( h \) if the underlying \( u \) and \( h \) are clear from the context. 
  \end{remark}

\begin{lemma}\label{lm second var rewrite}
  The second variation for the functional \( E(\Omega) \) in Lemma \ref{lem-second-variation}, that is, \eqref{weighted stability}
   can be rewritten as
    \begin{align}
\tfrac{\mathrm{d}^2}{\mathrm{d} t^2} E (\Omega_t) |_{t = 0}= & \int_{\partial \Omega} | \nabla_{\partial \Omega} \psi |^2 +
\int_{\partial \Omega} [\gamma \psi \langle \nabla_{\partial \Omega} w,
\nabla_{\partial \Omega} \psi \rangle + (\tfrac{\gamma^2}{4} - \gamma)
\psi^2 | \nabla_{\partial \Omega} w|^2] \\
& \quad + \int_{\partial \Omega} [\gamma u^{- 1} \Delta u - (|A|^2
+\ensuremath{\operatorname{Ric}}(\nu,\nu))] \psi^2 \\
& \quad - \int_{\partial \Omega} [\gamma H w_{\nu} + h_{\nu} + \gamma
w_{\nu}^2] \psi^2 . 
\end{align}
where $w = \log u$ and \( \psi \) is any smooth function on \( \partial \Omega \).
\end{lemma}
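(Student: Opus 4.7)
The plan is a direct calculation: substitute $\phi = u^{-\gamma/2}\psi$ into \eqref{weighted stability} and perform two integrations by parts on the closed hypersurface $\partial\Omega$. The exponent $-\gamma/2$ is engineered so that $u^\gamma\phi^2 = \psi^2$; this absorbs the weight $u^\gamma$ in every term that appears simply as (coefficient)$\cdot\phi\cdot u^\gamma\phi$, and reduces the problem to a pointwise identity in $\psi$, $\nabla_{\partial\Omega}\psi$, $w=\log u$, and $\nabla_{\partial\Omega}w$.

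The pointwise-multiplicative terms (those without derivatives of $\phi$) are handled first. Using $u^{-1}u_\nu=w_\nu$, the six terms $-|A|^2\phi$, $-\operatorname{Ric}(\nu,\nu)\phi$, $-\gamma u^{-2}u_\nu^2\phi$, $\gamma u^{-1}(\Delta u)\phi$, $-\gamma u^{-1}H u_\nu\phi$, and $-h_\nu\phi$ each collapse under multiplication by $u^\gamma\phi$ to $\psi^2$ times the corresponding factor. Collecting these immediately yields the second and third lines of the target formula.

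The three derivative terms from \eqref{weighted stability} require integration by parts. For $-\Delta_{\partial\Omega}\phi$, the identity $\nabla(u^\gamma\phi)=u^\gamma(\gamma\phi\nabla w+\nabla\phi)$ gives
\[\int_{\partial\Omega}(-\Delta_{\partial\Omega}\phi)u^\gamma\phi = \int_{\partial\Omega} u^\gamma|\nabla_{\partial\Omega}\phi|^2 + \gamma u^\gamma\phi\langle\nabla_{\partial\Omega}w,\nabla_{\partial\Omega}\phi\rangle.\]
For the term $-\gamma u^{-1}(\Delta_{\partial\Omega}u)\phi$, after multiplication by $u^\gamma\phi$ it becomes $-\gamma u^{\gamma-1}\phi^2\Delta_{\partial\Omega}u$; integrating by parts against $\nabla u$ produces
\[\int_{\partial\Omega}\gamma(\gamma-1)u^\gamma\phi^2|\nabla_{\partial\Omega}w|^2 + 2\gamma u^\gamma\phi\langle\nabla_{\partial\Omega}w,\nabla_{\partial\Omega}\phi\rangle.\]
The third term $-\gamma u^{-1}\langle\nabla u,\nabla\phi\rangle\cdot u^\gamma\phi$ is already of the form $-\gamma u^\gamma\phi\langle\nabla w,\nabla\phi\rangle$. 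Expanding $\phi = u^{-\gamma/2}\psi$ via the two identities $u^\gamma\phi\nabla_{\partial\Omega}\phi = \psi(-\tfrac{\gamma}{2}\psi\nabla_{\partial\Omega}w+\nabla_{\partial\Omega}\psi)$ and $u^\gamma|\nabla_{\partial\Omega}\phi|^2 = |\nabla_{\partial\Omega}\psi|^2 - \gamma\psi\langle\nabla_{\partial\Omega}w,\nabla_{\partial\Omega}\psi\rangle + \tfrac{\gamma^2}{4}\psi^2|\nabla_{\partial\Omega}w|^2$, the coefficients of $\psi\langle\nabla_{\partial\Omega}w,\nabla_{\partial\Omega}\psi\rangle$ receive contributions $-\gamma$, $+2\gamma$, $-\gamma$ summing to $+\gamma$, while the coefficients of $\psi^2|\nabla_{\partial\Omega}w|^2$ combine as $\tfrac{\gamma^2}{4} + (\gamma^2-\gamma) - \gamma^2 = \tfrac{\gamma^2}{4}-\gamma$. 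This reproduces the first line of the target formula.

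The proof is a purely algebraic identity, so there is no conceptual obstacle. The only subtle point is the sign-tracking of the three separate contributions to the mixed gradient term; an arithmetic error in any one of them would spoil the clean $+\gamma$ coefficient. The use of closedness of $\partial\Omega$ — to suppress boundary terms in the two integrations by parts — is automatic in this setting.
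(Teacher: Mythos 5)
Your proof is correct and follows essentially the same route as the paper: substitute $\phi = u^{-\gamma/2}\psi$, integrate the three derivative terms by parts, and collect coefficients of $|\nabla_{\partial\Omega}\psi|^2$, $\psi\langle\nabla_{\partial\Omega}w,\nabla_{\partial\Omega}\psi\rangle$, and $\psi^2|\nabla_{\partial\Omega}w|^2$. (The paper substitutes before integrating by parts; you integrate by parts first and then substitute, which is an inessential reordering.) One small slip in the write-up: you state the mixed-gradient coefficient receives contributions $-\gamma$, $+2\gamma$, $-\gamma$ ``summing to $+\gamma$,'' but these sum to $0$; the correct bookkeeping is $-\gamma$ from $u^\gamma|\nabla_{\partial\Omega}\phi|^2$ and $+2\gamma$ from the combined cross terms $(\gamma+2\gamma-\gamma)\,u^\gamma\phi\langle\nabla_{\partial\Omega}w,\nabla_{\partial\Omega}\phi\rangle$, which does give $+\gamma$. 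Your final coefficients are right, so this is a typo rather than an error, but given that you flagged sign-tracking here as the one delicate point, it is worth fixing.
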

\begin{proof}
We do this by setting $\psi =\phi u^{ \gamma / 2}$ in Lemma \ref{lem-second-variation}.
We collect the following three terms
\[ \int_{\partial \Omega} [\Delta_{\partial \Omega} \phi + \gamma u^{- 1} \phi
   \Delta_{\partial \Omega} u + \gamma u^{- 1} \langle \nabla_{\partial
   \Omega} u, \nabla_{\partial \Omega} \phi \rangle] u^{\gamma} \phi, \]
and now let $\phi = u^{- \gamma / 2} \psi$, we have
\begin{align}
& \int_{\partial \Omega} [\Delta_{\partial \Omega} \phi + \gamma u^{- 1}
\phi \Delta_{\partial \Omega} u + \gamma u^{- 1} \langle \nabla_{\partial
\Omega} u, \nabla_{\partial \Omega} \phi \rangle] u^{\gamma} \phi
\\
= & \int_{\partial \Omega} [\psi \Delta_{\partial \Omega} u^{- \gamma / 2} +
2 \langle \nabla_{\partial \Omega} u^{- \gamma / 2}, \nabla_{\partial
\Omega} \psi \rangle + u^{- \gamma / 2} \Delta_{\partial \Omega} \psi +
\gamma u^{- 1 - \gamma / 2} \psi \Delta_{\partial \Omega} u \\
& \qquad + \gamma u^{- 1} \langle \nabla_{\partial \Omega} u,
\nabla_{\partial \Omega} (u^{- \gamma / 2} \psi) \rangle] u^{\gamma / 2}
\psi \\
= & \int_{\partial \Omega} \psi \Delta_{\partial \Omega} \psi \\
& \quad + \int_{\partial \Omega} [u^{\gamma / 2} \psi^2 \Delta_{\partial
\Omega} u^{- \gamma / 2} + 2 u^{\gamma / 2} \psi \langle \nabla_{\partial
\Omega} u^{- \gamma / 2}, \nabla_{\partial \Omega} \psi \rangle \\
& \hspace{3em} + \gamma u^{- 1} \psi^2 \Delta_{\partial \Omega} u + \gamma
u^{\gamma / 2 - 1} \psi \langle \nabla_{\partial \Omega} u, \nabla_{\partial
\Omega} (u^{- \gamma / 2} \psi) \rangle] \\
= & - \int_{\partial \Omega} | \nabla_{\partial \Omega} \psi |^2 \\
& \quad + \int_{\partial \Omega} [- \langle \nabla_{\partial \Omega}
(u^{\gamma / 2} \psi^2), \nabla_{\partial \Omega} u^{- \gamma / 2} \rangle +
2 u^{\gamma / 2} \psi \langle \nabla_{\partial \Omega} u^{- \gamma / 2},
\nabla_{\partial \Omega} \psi \rangle \\
& \hspace{3em} - \gamma \langle \nabla_{\partial \Omega} (u^{- 1} \psi^2),
\nabla_{\partial \Omega} u \rangle + \gamma u^{\gamma / 2 - 1} \psi \langle
\nabla_{\partial \Omega} u, \nabla_{\partial \Omega} (u^{- \gamma / 2} \psi)
\rangle] .
\end{align}
In the last line, we have used integration by parts on the two terms containing
$\Delta_{\partial \Omega} u$ and the term containing $\Delta_{\partial \Omega}
\psi$. By a direct calculation, we conclude that
\begin{align}
& \int_{\partial \Omega} [\Delta_{\partial \Omega} \phi + \gamma u^{- 1}
\phi \Delta_{\partial \Omega} u + \gamma u^{- 1} \langle \nabla_{\partial
\Omega} u, \nabla_{\partial \Omega} \phi \rangle] u^{\gamma} \phi
\\
= & - \int_{\partial \Omega} | \nabla_{\partial \Omega} \psi |^2 +
\int_{\partial \Omega} [- \gamma \psi \langle \nabla_{\partial \Omega} w,
\nabla_{\partial \Omega} \psi \rangle + (\gamma - \tfrac{\gamma^2}{4})
\psi^2 | \nabla_{\partial \Omega} w|^2]
\end{align}
 in \eqref{weighted stability}. The remaining calculation is direct.
\end{proof}

\subsection{Existence theorem}

For $u\equiv 1$, from \cite[Proposition 2.1]{zhu2021} and \cite[Section 5.1]{Gromov2023}, we have the following existence result of the $\mu$-bubble.
\begin{lemma}[Existence of $\mu$-bubble]\label{lm existence mu bubble}
For a Riemannian band $(M^{n},g)$ with $3\leq n\leq 7$, if either $h\in C^{\infty}(\overset{\circ}{M})$ with $h\to \pm\infty$ on $\partial_{\mp}M$, or $h\in C^{\infty}(M)$ with
\[h|_{\partial_{-}M}>H_{\partial_{-}M},\quad h|_{\partial_{+}M}<H_{\partial_{+}M}\]
where $H_{\partial_{-}M}$ is the mean curvature of $\partial_{-}M$ with respect to the inward normal and $H_{\partial_{+}M}$
is the mean curvature of $\partial_{+}M$ with respect to the outward normal. Then there exists an $\Omega\in \mathcal{C}$ with smooth boundary such that
\[E(\Omega)=\inf_{\Omega'\in\mathcal{C}}E(\Omega').\]
\end{lemma}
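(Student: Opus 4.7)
The plan is to apply the direct method of the calculus of variations to the functional \( E \) on the class \( \mathcal{C} \) of Caccioppoli sets. With \( u \equiv 1 \) the weight \( u^\gamma \) reduces to \( 1 \) and \( E(\Omega) \) is the usual perimeter plus a bulk term \( -\int_\Omega (\chi_\Omega - \chi_{\bar\Omega}) h \). The three ingredients required are: compactness of a minimizing sequence in \( \mathrm{BV} \); a barrier argument keeping the reduced boundary of the limit away from \( \partial M \); and interior regularity of the limit in dimensions \( 3\leq n \leq 7 \).

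For the compactness step, I would first treat the case \( h \in C^\infty(M) \). Lower semicontinuity of the perimeter under \( L^1 \) convergence of characteristic functions is classical De Giorgi theory, and the bulk term is \( L^1 \)-continuous in \( \chi_\Omega \). Coercivity comes from comparing with \( \bar\Omega \) itself, which gives a uniform perimeter bound on any minimizing sequence, so the \( \mathrm{BV} \) compactness theorem furnishes a subsequential limit \( \Omega_\infty \in \mathcal{C} \). The singular case \( h \to \pm\infty \) would be reduced to the smooth case by truncation: pick \( h_k \in C^\infty(M) \) agreeing with \( h \) on \( \{ \mathrm{dist}(\cdot,\partial M) \geq 1/k \} \) and still satisfying the sign condition at \( \partial M \), solve the problem for each \( h_k \), and then pass to the limit.

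The main obstacle, where I would spend most of the care, is the barrier argument preventing \( \partial^\ast \Omega_\infty \) from touching \( \partial M \). In the smooth case, I would foliate a collar of each boundary component by hypersurfaces \( \Sigma_s \) of mean curvature close to \( H_{\partial_\pm M} \), obtained e.g.\ from the level sets of a small normal perturbation of the distance function to \( \partial M \). The hypotheses \( h|_{\partial_- M} > H_{\partial_- M} \) and \( h|_{\partial_+ M} < H_{\partial_+ M} \), combined with the Euler--Lagrange equation \( H = h \) derived in Lemma \ref{lem-first-variation}, would then allow a strong maximum principle argument at a hypothetical tangency of \( \partial^\ast \Omega_\infty \) with some \( \Sigma_s \) to yield a contradiction. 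In the blow-up case the argument is more direct and even cleaner: if \( \partial^\ast \Omega_\infty \) met a thin collar of \( \partial_\mp M \) where \( \pm h \) is very large, a small modification of \( \Omega_\infty \) that swallows or deletes the intersecting slab strictly decreases \( E \), since the divergent bulk gain dominates the bounded perimeter cost.

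Once \( \partial^\ast \Omega_\infty \Subset \overset{\circ}{M} \) is established, I would invoke the standard interior regularity theory for almost-minimizers of perimeter with a bounded prescribed-mean-curvature term (Tamanini, Duzaar--Steffen). This yields smoothness of \( \partial^\ast \Omega_\infty \) away from a singular set of Hausdorff dimension at most \( n-8 \); this set is empty in the range \( 3 \leq n \leq 7 \), and the Euler--Lagrange equation \( H=h \) is then satisfied classically, producing the desired smooth minimizer \( \Omega \).
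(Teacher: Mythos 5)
The paper does not in fact prove this lemma; it is imported verbatim from Zhu \cite[Proposition 2.1]{zhu2021} and Gromov \cite[Section 5.1]{Gromov2023}, so there is no internal argument to compare against. That said, your outline — direct method, coercivity by testing against \( \bar\Omega \), De Giorgi lower semicontinuity plus BV compactness, a barrier argument to keep \( \partial^\ast\Omega_\infty \) away from \( \partial M \), and then almost-minimizer regularity (Tamanini, Duzaar--Steffen) to conclude smoothness in \( 3\le n\le 7 \) — is indeed the standard route followed by those references, and most of it is sound as a sketch.

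The one step that, as written, does not close is the barrier argument in the smooth case. You propose to derive a contradiction at a hypothetical point of tangency of \( \partial^\ast\Omega_\infty \) with a leaf \( \Sigma_s \) of a collar foliation by invoking the \emph{strong maximum principle together with the Euler--Lagrange equation \( H=h \)}. But at that stage \( \Omega_\infty \) is only a set of finite perimeter: its regularity, and hence the meaning of the pointwise identity \( H=h \) along \( \partial^\ast\Omega_\infty \), is established only \emph{after} the boundary has been pushed into the interior. The argument as phrased is therefore circular. The standard repair — and the one used in the cited sources — is a direct comparison at the BV level: form the competitor \( \Omega_\infty \cup \{d<s\} \) near \( \partial_- M \) (resp.\ \( \Omega_\infty \setminus \{d'<s\} \) near \( \partial_+ M \)), apply the divergence theorem to \( X=\nabla d \) on the symmetric difference, and use the strict inequalities \( h|_{\partial_- M}>H_{\partial_- M} \), \( h|_{\partial_+ M}<H_{\partial_+ M} \) (which persist on a thin collar) to show that \( E \) strictly decreases, contradicting minimality. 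You in fact already argue exactly this way in the blow-up case; the same mechanism works in the smooth case and removes the circularity. Alternatively one could cite a maximum-principle result valid for stationary varifolds or perimeter minimizers without a priori smoothness (Simon, White, Ilmanen), but then one should not simultaneously invoke the classical Euler--Lagrange equation. A smaller remark: in the blow-up case, the uniform perimeter bound from testing against \( \bar\Omega \) is available only after observing that the sign of \( h \) near each component of \( \partial M \) makes the bulk term bounded below over \( \mathcal{C} \); this is implicit in your truncation scheme but worth stating, since otherwise coercivity is not obvious.
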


For the warped $\mu$-bubble with $u\geq\delta>0$ in \eqref{eqn-omega}, from \cite[Proposition 12]{CL2024}, we have
\begin{lemma}[Existence of warped $\mu$-bubble]\label{lem-warped-bubble}
For a Riemannian band $(M^{n},g)$ with $3\leq n\leq 7$, if $h\in C^{\infty}(\overset{\circ}{M})$ in the functional $E(\Omega)$ satisfies $h\to \pm\infty$ on $\partial_{\mp}M$, then there exists an $\Omega\in \mathcal{C}$ with smooth boundary such that
\[E(\Omega)=\inf_{\Omega'\in\mathcal{C}}E(\Omega').\]
\end{lemma}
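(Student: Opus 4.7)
My plan is to prove existence by the classical direct method in the calculus of variations, in the spirit of the $\mu$-bubble existence proof of Zhu (Lemma \ref{lm existence mu bubble}) and of Chodosh--Li, adapted to the weighted setting where the perimeter is measured with the density $u^{\gamma}$ and the bulk integrand is $h u^{\gamma}$. The key simplification relative to general weighted problems is that $u \geq \delta > 0$ is bounded uniformly away from zero, so the weighted perimeter is comparable to the standard BV perimeter.

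First I would fix a minimizing sequence $\{\Omega_k\} \subset \mathcal{C}$ with $E(\Omega_k) \to \inf_{\mathcal{C}} E$. The role of the assumption $h \to \pm \infty$ on $\partial_\mp M$ is to provide barriers confining the sequence. Concretely, I would construct, for each large $C > 0$, smooth foliations near $\partial_\pm M$ by equidistant hypersurfaces whose mean curvature (with the appropriate normal) is bounded in absolute value by some fixed constant depending only on $g$. Comparing with the prescribed mean curvature equation $H = h - \gamma u^{-1} u_\nu$ from Lemma \ref{lem-first-variation}, in the region where $|h|$ is larger than this constant plus $\sup |\gamma u^{-1} u_\nu|$, any piece of $\partial^* \Omega_k$ can be pushed into $\overset{\circ}{M}$ against such a leaf with a strict decrease of $E$. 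This sweeping argument traps $\Omega_k \triangle \bar{\Omega}$ inside a fixed compact set $K \Subset \overset{\circ}{M}$, on which $u^{\gamma}$ and $h u^{\gamma}$ are both bounded.

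Next, the boundedness of $E(\Omega_k)$ together with the $L^\infty$ bound on $h u^\gamma$ over $K$ gives a uniform bound on $\int_{\partial^* \Omega_k} u^\gamma$, and since $u \geq \delta > 0$, this forces $\sup_k \mathcal{H}^{n-1}(\partial^* \Omega_k) < \infty$. BV compactness then yields a subsequence converging in $L^1$ to some $\Omega_\infty \in \mathcal{C}$ still supported in $K$. Lower semicontinuity of the weighted perimeter under $L^1$ convergence (for continuous positive densities, this follows from the Reshetnyak-type representation) combined with continuity of the bulk term produces $E(\Omega_\infty) = \inf_{\mathcal{C}} E$.

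For regularity, $\partial \Omega_\infty$ is a local minimizer of a weighted perimeter functional with smooth bounded weight $u^\gamma$ perturbed by a smooth prescribed-mean-curvature term; equivalently it is an $(\omega,1)$-almost minimizer of the standard perimeter in the sense of Tamanini. The Almgren--De Giorgi--Federer regularity theory then yields smoothness of $\partial \Omega_\infty$ away from a singular set of Hausdorff dimension at most $n - 8$, which is empty in our range $3 \leq n \leq 7$. The main technical obstacle is the barrier step: one must verify quantitatively that near $\partial_{\pm} M$ the $h$-induced gain beats the perimeter cost of the perturbation uniformly in $\Omega$, and this is precisely where the divergence $h \to \pm \infty$ on $\partial_\mp M$ is indispensable; the remaining compactness, lower semicontinuity and regularity steps are standard geometric measure theory.
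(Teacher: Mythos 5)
The paper does not give a proof of this lemma; it defers entirely to the cited result \cite[Proposition~12]{CL2024}, whose proof is the same direct-method argument you outline. Your sketch is correct in substance: confinement of a minimizing sequence using the divergence of $h$ near $\partial_\mp M$ (with the term $\gamma u^{-1}u_\nu$ controlled because $u\ge\delta>0$ and $u$ is smooth on the compact band), BV compactness, lower semicontinuity of the $u^\gamma$-weighted perimeter, and Tamanini-type almost-minimizer regularity for $3\le n\le 7$, with smoothness upgraded by bootstrapping the prescribed mean curvature equation. One small caveat: the barrier step is usually carried out as a cut-and-replace comparison via the coarea formula (replacing $\Omega_k$ by $\Omega_k\cup V_-$ or $\Omega_k\setminus V_+$ for sublevel sets $V_\pm$ of the distance to $\partial_\pm M$), rather than literally flowing $\partial^*\Omega_k$ against a foliation leaf, but the mechanism you describe is the right one and there is no genuine gap.
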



\section{Band width estimate with the spectral Ricci curvature condition} \label{sec spectral ricci}

In this section, we prove the band width estimate with a spectral Ricci curvature bound (Theorem \ref{Thm-Ricci})
and the Ricci curvature rigidity Theorem \ref{ricci rigid reformulation}.
The proof of Theorem \ref{ricci rigid reformulation} is quite involved, and we divide the proof into several subsections. 

\subsection{Proof of the band width estimate}

We first prove Theorem \ref{Thm-Ricci}.

\begin{proof}[Proof of Theorem \ref{Thm-Ricci}]
  We prove by contradiction. We assume that
  \[\operatorname{width}_{g}(M)> 2\ell_{0}:=
 \frac{2\pi}{\sqrt{\Lambda(4-\gamma)}}
    ,\]
  then there exists a small $\delta>0$ such that
  the band
  \[ M\setminus\{x\in M: \text{ }\operatorname{dist}_{g}(x,\partial M)<\delta\}\]
  still has length greater than \( 2 \ell_0 \) and
  we can perturb it into a smooth band \( \tilde M \)
  so that the band width of \(  \tilde M \) is greater than \( 2\ell_{0} \). 
  By \cite[Lemma 7.2]{cecchini-scalar-2024}, there exists a smooth function $\chi:\tilde{M}\to [-\ell_0,\ell_0]$
on $\tilde{M}$ such that the Lipschitz constant $\operatorname{Lip}\chi<1$, $\chi^{-1}(-\ell_{0})=\partial_{-}\tilde{M}$ and $\chi^{-1}(\ell_{0})=\partial_{+}\tilde{M}$. Then the function  
\[h:=-
  \frac{2\sqrt{\Lambda}}{\sqrt{4-\gamma}}\tan\left(\frac{1}{2}\sqrt{(4-\gamma)\Lambda}\;\chi\right)=:\eta \circ \chi\]
on $\tilde{M}\setminus \partial \tilde{M}$ tends to $-\infty$ as $x\to \partial_{+}\tilde{M}$ and to $\infty$ as $x\to \partial_{-}\tilde{M}$.
We also have $u$ bounded below by a positive number on $\tilde{M}$.
It is easy to check that \( \eta: (-\ell_0,\ell_0)\) satisfies the ODE
\begin{equation} \label{eq ode spec ric}
  (1-\tfrac{1}{4}\gamma)\eta^2 + \eta' + \Lambda =0, \eta'<0.
  \end{equation}
  Using Lemma \ref{lem-warped-bubble}, we have a minimising warped $\mu$-bubble $\Omega$ such that $H=-\gamma u^{- 1} u_{\nu} + h$ along \( \partial \Omega \) and the
  second variation
$\tfrac{\mathrm{d}^2}{\mathrm{d} t^2} E (\Omega_t) |_{t = 0}$ is non-negative for any smooth variations of $\Omega$ such that $\Omega_0 = \Omega$.

Given any function $\psi \in C^{\infty}(\partial \Omega)$, we consider the variations $ \Omega_t$ of $\Omega$ such that normal speed of the variations $\partial \Omega_{t}$ is $\phi=u^{\gamma/2}\psi$ and
we can firstly rewrite the second variation as in
Lemma \ref{lm second var rewrite} and we now rewrite the second variation further based on Lemma \ref{lm second var rewrite}.
Recall \cite[(5.2)]{zhu2021}: let $\{e_1, e_2 \}$ be a local orthonormal frame of
$\partial \Omega$, then
\begin{equation}
  \ensuremath{\operatorname{Ric}} (\nu,\nu) + |A|^2 =\ensuremath{\operatorname{Ric}}
  (e_1,e_{1}) +\ensuremath{\operatorname{Ric}} (e_2,e_{2}) + H^2 - \operatorname{Sc}_{\partial \Omega} .
  \label{eq:zhu 5.2}
\end{equation}
 Since
\begin{equation}
  \gamma u^{- 1} \Delta u -\ensuremath{\operatorname{Ric}} (e_1,e_{1})
  -\ensuremath{\operatorname{Ric}} (e_2,e_2) \leq \gamma u^{- 1} \Delta u -
  2\ensuremath{\operatorname{Ric}} \leq - \Lambda \label{eq:apply lowest
  eigen}
\end{equation}
by \eqref{eq def Ric} and \eqref{eq spectral ricci},
we have that
\begin{align}
  & \int_{\partial \Omega} [\gamma u^{- 1} \Delta u - (|A|^2
  +\ensuremath{\operatorname{Ric}}(\nu,\nu))] \psi^2 \\
  = & \int_{\partial \Omega} [\gamma u^{- 1} \Delta u -
  (\ensuremath{\operatorname{Ric}}(e_1,e_{1}) +\ensuremath{\operatorname{Ric}}(e_2,e_{2})
  + H^2 - \operatorname{Sc}_{\partial \Omega})] \psi^2 \\
  = & \int_{\partial \Omega} (\operatorname{Sc}_{\partial \Omega} - H^2) \psi^2 +
  \int_{\partial \Omega} [\gamma u^{- 1} \Delta u
  -\ensuremath{\operatorname{Ric}}(e_1,e_{1}) -\ensuremath{\operatorname{Ric}}(e_2,e_{2})]
  \psi^2 \\
  \leq & \int_{\partial \Omega} (\operatorname{Sc}_{\partial \Omega} - H^2 - 
  \Lambda) \psi^2 . 
\end{align}

Using the above and that $H = - \gamma w_{\nu} + h$ (recall that $w=\log u$) in Lemma \ref{lm second var rewrite}, we arrive
\begin{align}
0 \leq & \int_{\partial \Omega} | \nabla_{\partial \Omega} \psi |^2 +
\int_{\partial \Omega} [\gamma \psi \langle \nabla_{\partial \Omega} w,
\nabla_{\partial \Omega} \psi \rangle + (\tfrac{\gamma^2}{4} - \gamma)
\psi^2 | \nabla_{\partial \Omega} w|^2] \\
& \quad + \int_{\partial \Omega} [\operatorname{Sc}_{\partial \Omega} - (- \gamma w_{\nu} +
h)^2 -  \Lambda] \psi^2 \\
& \quad - \int_{\partial \Omega} [\gamma (- \gamma w_{\nu} + h) w_{\nu} +
h_{\nu} + \gamma w_{\nu}^2] \psi^2 \\
= & \int_{\partial \Omega} | \nabla_{\partial \Omega} \psi |^2 + \operatorname{Sc}_{\partial
\Omega} \psi^2 + \int_{\partial \Omega} [\gamma \psi \langle
\nabla_{\partial \Omega} w, \nabla_{\partial \Omega} \psi \rangle +
(\tfrac{\gamma^2}{4} - \gamma) \psi^2 | \nabla_{\partial \Omega} w|^2]
\\
& \quad - \int_{\partial \Omega} [\gamma w_{\nu}^2 - \gamma h w_{\nu} + h^2
- | \nabla h| +  \Lambda] \psi^2 .
\end{align}
By Cauchy-Schwarz inequality,
\begin{equation}
  \int_{\partial \Omega} [\gamma \psi \langle \nabla_{\partial \Omega} w,
   \nabla_{\partial \Omega} \psi \rangle + (\tfrac{\gamma^2}{4} - \gamma)
   \psi^2 | \nabla_{\partial \Omega} w|^2] \leq \tfrac{1}{4} \gamma (1 -
   \tfrac{\gamma}{4})^{- 1} \int_{\partial \Omega} | \nabla_{\partial \Omega}
   \psi |^2 , \label{eq cs 1}
 \end{equation}
and
\begin{equation} \gamma w_{\nu}^2 - \gamma h w_{\nu} + h^2 = \gamma (w_{\nu} - \tfrac{1}{2}
   h)^2 + (1 - \tfrac{1}{4} \gamma) h^2  \geq (1 - \tfrac{1}{4}
   \gamma) h^2 ,  \end{equation}
which is positive by the assumption $0 < \gamma < 4$. Therefore,
\begin{align}
0 \leq & (1 + \tfrac{1}{4} \gamma (1 - \gamma / 4)^{- 1})
\int_{\partial \Omega} | \nabla_{\partial \Omega} \psi |^2 + \int_{\partial
\Omega} \operatorname{Sc}_{\partial \Omega} \psi^2 \\
& \quad - \int_{\partial \Omega} \left[ (1 - \tfrac{1}{4} \gamma) h^2 - |
\nabla h| +  \Lambda \right] \psi^2 .
\label{stability for h}
\end{align}
Then by the ODE \eqref{eq ode spec ric} and that \( \operatorname{Lip} \chi < 1 \),
\begin{align}
& \tfrac{4}{4-\gamma}
\int_{\partial \Omega} | \nabla_{\partial \Omega} \psi |^2 + \int_{\partial
\Omega} \operatorname{Sc}_{\partial \Omega} \psi^2 \\
>&  \int_{\partial \Omega} \left[ (1 - \tfrac{1}{4} \gamma) (\eta\circ \chi)^2 +\eta'\circ \chi +  \Lambda \right] \psi^2 =0.
\end{align}
Since $\partial \Omega$ is homologous to $T^{2}$, we can choose a connected component of $\partial \Omega$ which admits a map of non-zero 
degree to \( T^2 \). So \( \chi (\Sigma) \leq 0 \).
By taking \( \psi =0 \) on other components and \( \psi =1 \) on $\Sigma$, and applying the Gauss-Bonnet theorem, we obtain a contradiction
to the above inequality.
\end{proof}



\subsection{Existence of a non-trivial minimiser} \label{sub exist ricci}
From here in this section, we are devoted to the proof of Theorem \ref{ricci rigid reformulation}. First, 
we construct a non-trivial minimiser to the weighted functional
\eqref{eqn-omega} by using an argument of J. Zhu
\cite{zhu2021}.

\begin{lemma}\label{lm existence minimiser spectral ricci}
  Let \( M \) be as in Theorem \ref{ricci rigid reformulation}, then there exists a minimiser \( \Omega \)
  of the functional \eqref{eqn-omega}.
  \end{lemma}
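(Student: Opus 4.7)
The existence result in Lemma~\ref{lem-warped-bubble} does not apply directly to $M$ because $u$ vanishes on $\partial M$. I would therefore apply it on an exhaustion of $M$ by compact subbands on which $u\geq\delta>0$, and then pass to a limit while controlling the minimising boundaries via a barrier/competitor argument using the level sets of $\chi=\ell_0\zeta$.

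\textbf{Setup.} Put $\chi=\ell_0\zeta\colon M\to(-\ell_0,\ell_0)$, so $|\nabla\chi|\leq 1$ by the hypothesis $|\nabla\zeta|\leq 1/\ell_0$, and take
\[
h(x)=-\frac{2\sqrt{\Lambda}}{\sqrt{4-\gamma}}\tan\!\Bigl(\tfrac{1}{2}\sqrt{(4-\gamma)\Lambda}\,\chi(x)\Bigr),
\]
which is smooth on $\overset{\circ}{M}$ and tends to $\pm\infty$ as $\zeta\to\mp 1$. (This is precisely the function used in Theorem~\ref{Thm-Ricci} at the sharp width threshold.) Fix a reference set $\bar\Omega=\{\zeta\leq 0\}$ and exhaust $M$ by the compact subbands $M_k=\{|\zeta|\leq 1-1/k\}$. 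On each $M_k$, the smooth positive function $u$ is bounded below by some $\delta_k>0$, and $h|_{M_k}$ blows up to $\pm\infty$ on $\partial_\mp M_k$. Lemma~\ref{lem-warped-bubble} then yields a minimiser $\Omega_k$ of the restricted functional, with smooth boundary satisfying the Euler--Lagrange identity $H=-\gamma u^{-1}u_\nu+h$ from Lemma~\ref{lem-first-variation} and the stability of Lemma~\ref{lem-second-variation}.

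\textbf{Uniform interior bound and limit.} The core step is to show that $\partial\Omega_k\subset\{|\chi|\leq\ell_0-\eta\}$ for some $\eta>0$ independent of $k$. I would compare $\Omega_k$ with the competitor $\Omega_k\setminus\{\chi>\ell_0-\eta\}$: excising this thin slab changes the weighted boundary area by at most the slice integral $\int_{\{\chi=\ell_0-\eta\}\cap\Omega_k}u^\gamma$, while the bulk term changes by $\int_{\Omega_k\cap\{\chi>\ell_0-\eta\}}(-h)u^\gamma$. By the coarea formula with $|\nabla\chi|\leq 1$ and because $-h\to+\infty$ as $\chi\to\ell_0$, the bulk term dominates the slice term for $\eta$ small, contradicting minimality unless $\partial\Omega_k$ already sits in the smaller region; a symmetric argument handles the side $\zeta\to-1$. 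Once $\partial\Omega_k$ is confined to a fixed compact $K\Subset\overset{\circ}{M}$, standard compactness for Caccioppoli sets and the interior regularity theory of minimising boundaries in dimensions $\leq 7$ produce a limit $\Omega\in\mathcal{C}$ with smooth $\partial\Omega\subset K$. The Euler--Lagrange equation and stability pass to the limit, giving the desired stable critical point.

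\textbf{Main obstacle.} The delicate point is quantifying the barrier estimate. Since $M$ realises the equality case $\operatorname{width}_g(M)=2\ell_0$ of Theorem~\ref{Thm-Ricci}, the choice of $h$ is borderline-critical and the level sets $\{\chi=c\}$ are only Lipschitz. Extracting a definite gap $\eta>0$ uniformly in $k$ therefore requires a careful slicing calculation and control of the integrability of $hu^\gamma$ near $\partial M$ (using the rate at which $u$ vanishes there). Once this uniform confinement is in hand, the remainder of the construction closely follows the exhaustion strategy of Zhu~\cite{zhu2021}, adapted to the warping weight $u^\gamma$.
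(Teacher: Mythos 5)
Your exhaustion plan breaks down at the very first step. On the compact subband $M_k=\{|\zeta|\le 1-1/k\}$, the function
\[
h=-\tfrac{2\sqrt{\Lambda}}{\sqrt{4-\gamma}}\tan\bigl(\tfrac12\sqrt{(4-\gamma)\Lambda}\,\chi\bigr)
\]
does \emph{not} blow up on $\partial M_k$: it only blows up as $\zeta\to\pm1$, i.e.\ at $\partial M$, and is smooth and bounded on all of $M_k$. Hence the hypothesis of Lemma~\ref{lem-warped-bubble} (that $h\to\pm\infty$ on $\partial_\mp M_k$) fails, and you cannot invoke it to produce $\Omega_k$. Your competitor/slicing argument is then built on objects you have not constructed. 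Moreover, even a modified $h_k$ that blows up on $\partial M_k$ would no longer satisfy the exact ODE $(1-\gamma/4)h^2+h'+\Lambda=0$ needed downstream, and without control of the \emph{sign} of the defect, the later Gauss--Bonnet argument would not go through.

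The paper resolves both issues simultaneously by a carefully designed perturbation rather than a plain exhaustion. It sets $\eta_\varepsilon(t)=\eta\bigl(t+\varepsilon\alpha(t)\bigr)$ with $\alpha$ odd, $\alpha>0$ on $(0,\ell_0]$, $\alpha'>0$ on $[0,\ell_0/2)$, $\alpha'<0$ on $(\ell_0/2,\ell_0]$. Then $\eta_\varepsilon\to\pm\infty$ at $\mp T_\varepsilon$ for some $T_\varepsilon<\ell_0$, so $h_\varepsilon=\eta_\varepsilon\circ\zeta$ genuinely blows up on the boundary of the compact subband $\zeta^{-1}((-T_\varepsilon,T_\varepsilon))$, where $u\ge\delta>0$ holds and Lemma~\ref{lem-warped-bubble} applies. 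The defect $(1-\tfrac{\gamma}{4})\eta_\varepsilon^2+\eta_\varepsilon'+\Lambda=\varepsilon\,\alpha'(t)\,\eta'(t+\varepsilon\alpha(t))$ has a definite sign: positive on $\ell_0/2<|t|<T_\varepsilon$, negative on $|t|<\ell_0/2$. This sign, fed into the stability inequality and Gauss--Bonnet, rules out $\Sigma_\varepsilon$ lying entirely in the outer annulus; this is the mechanism that confines the surfaces, not a slab-excision competitor estimate, which is delicate here precisely because $u$ vanishes at $\partial M$ at an unknown rate (you note this yourself as the ``main obstacle,'' but the issue is not merely technical -- the paper sidesteps it entirely). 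Finally, the paper passes $\varepsilon_k\to0$ and uses the fact that $\int_{\Sigma_{\varepsilon_k}}(\langle\nu_{\varepsilon_k},\nabla\zeta\rangle-1)\eta'_{\varepsilon_k}\circ\zeta$ is controlled by $\varepsilon_k\cdot\mathrm{Area}(\Sigma_{\varepsilon_k}\cap K)\to0$, which forces $\langle\nu,\nabla\zeta\rangle\to1$ and hence $\nabla_\Sigma\zeta=0$ in the limit, i.e.\ $\Sigma$ is a level set of $\zeta$. Your proposal never addresses this last point, yet it is precisely what the subsequent rigidity analysis (Lemma~\ref{lm infi rig spec ric}) requires of the constructed $\Omega$.
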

\begin{proof}
  For convenience, we multiply \( \zeta \) by \( \ell_0 \) and still denote the new resulting function by \( \zeta \).
Hence,
\[ \zeta (\partial_{\pm} M) = \pm \ell_0, \text{ and }  |\nabla \zeta| \leq 1. \]
We 
choose an odd, smooth function $\alpha (t) : [- \ell_0, \ell_0] \to \mathbb{R}$ such
that $\alpha (t) > 0$ on $(0, \ell_0]$, $\alpha' (t) > 0$ on $[0,
\tfrac{\ell_0}{2})$, $\alpha' (t) < 0$ on $(\tfrac{\ell_0}{2}, \ell_0]$. 

Let $\eta$ be the function given in \eqref{eq ode spec ric}. 
We define $\eta_{\varepsilon} (t) = \eta (t + \varepsilon \alpha (t))$ on a
sub-interval $(- T_{\varepsilon}, T_{\varepsilon})$ of $[- \ell_0, \ell_0]$ such
that $\eta_{\varepsilon} (t) \to \pm \infty$ as $t \to \mp T_{\varepsilon}$,
and we easily find that
\[ (1 - \tfrac{1}{4} \gamma) \eta_{\varepsilon}^2 + \eta_{\varepsilon}' +
   \Lambda = \varepsilon \alpha' (t) \eta' (t + \varepsilon \alpha (t)), \]
and
\begin{align}
  (1 - \tfrac{1}{4} \gamma) \eta_{\varepsilon}^2 + \eta_{\varepsilon}' +
  \Lambda & > 0 \text{ if } \tfrac{\ell_0}{2} < |t| < T_{\varepsilon},
  \label{crucial of perturbation} \\
  (1 - \tfrac{1}{4} \gamma) \eta_{\varepsilon}^2 + \eta_{\varepsilon}' +
  \Lambda & < 0 \text{ if } |t| < \tfrac{\ell_0}{2} . 
\end{align}

Define $h_{\varepsilon} (x) = \eta_{\varepsilon} (\zeta (x))$, where \( x\in M \).
Fix a sufficiently small number \( \varepsilon_0 >0 \). By Sard's lemma, \( \zeta^{-1}( \pm T_{\varepsilon}) \) are both regular surfaces of \( M \) for almost all \( \varepsilon \in (0,\varepsilon_0) \).
We use such \( \varepsilon \). Due to the
condition $\eta_{\varepsilon} (t) \to \pm \infty$ as $t \to \mp
T_{\varepsilon}$, and the existence
result of Lemma \ref{lem-warped-bubble}, we can construct a minimising \( h_{\varepsilon} \)-bubble \( \Omega_{\varepsilon} \subset \zeta^{-1}((-T_{\varepsilon}, T_{\varepsilon})) \).
Note $\partial\Omega_{\varepsilon}$ and $h_{\varepsilon}$ satisfy the inequality
\begin{align}
0 \leq & (1 + \tfrac{1}{4} \gamma (1 - \gamma / 4)^{- 1})
\int_{\partial \Omega_\varepsilon} | \nabla_{\partial \Omega_\varepsilon} \psi |^2 + \int_{\partial
\Omega_{\varepsilon}} \operatorname{Sc}_{\partial \Omega_\varepsilon} \psi^2 \\
& \quad - \int_{\partial \Omega_\varepsilon} \left[ (1 - \tfrac{1}{4} \gamma) h_\varepsilon^2 + 
\nabla_{\nu_\varepsilon} h_\varepsilon +  \Lambda \right] \psi^2 ,
\label{stability for h epssilon}
\end{align}
by the proof of Theorem \ref{Thm-Ricci}.
We claim that
$\partial\Omega_{\varepsilon}$ cannot lie entirely in the region
\[\{x \in M :
\text{ } \tfrac{\ell_0}{2} < | \zeta (x) | < T_{\varepsilon} \}.\] 
That is,
$\partial \Omega_{\varepsilon}$ has a non-empty intersection with the compact set \[K :=
\{x \in M : | \zeta(x) | \leq \tfrac{\ell_0}{2} \}.\]

Indeed, we can show this by selecting a connected component of \( \partial \Omega \).
Since \( \partial \Omega \) is homologous to a level set of \( \zeta \), we can pick a connected component \( \Sigma_{\varepsilon} \) such that the projection of \( \Sigma_{\varepsilon} \) to the \( T^2 \)-factor is a map of non-zero degree.  Taking \( \psi =1 \) on \( \Sigma_{\varepsilon} \) and \( \psi=0 \) in \eqref{stability for h epssilon} on other components, applying \eqref{crucial of perturbation}, and applying the Gauss-Bonnet theorem yields \( \chi (\Sigma_{\varepsilon}) >0 \) which contradicts the choice of \( \Sigma_{\varepsilon} \).
By the curvature estimates (see \cite[Theorem 3.6]{zhou-existence-2020}) and compactness, we can pick a sequence $\varepsilon_k
\to 0$ such that $\Sigma_{\varepsilon_{k}}$ converges locally and smoothly
to a surface $\hat{\Sigma}$ such that $\hat{\Sigma}$ has non-empty
intersection with $K$. 
We see that
each $\Sigma_{\varepsilon_{k}}$ satisfies the inequality
\begin{align}
  \int_{{\Sigma_{\varepsilon_{k}}} } (\langle\nu_{\varepsilon_{k}}, \nabla \zeta\rangle - 1)
  \eta_{\varepsilon_{k}}' \circ \zeta \leq & - \int_{\Sigma_{\varepsilon_{k}}} \left[ (1 -
  \tfrac{\gamma}{4}) (\eta_{\varepsilon_{k}} \circ \zeta)^2 + \Lambda +
  \eta_{\varepsilon_k}' \circ \zeta \right] \\
  = & - \varepsilon_k \int_{\Sigma_{\varepsilon_{k}}} \alpha' (\zeta) \eta' (\zeta + \varepsilon
  \alpha (\zeta)) \\
  = & - \varepsilon_k \left( \int_{\Sigma_{\varepsilon_{k}} \cap K} + \int_{\Sigma_{\varepsilon_{k}}
  \backslash K} \right) \alpha' (\zeta) \eta' (\zeta + \varepsilon \alpha
  (\zeta)) \\
  \leq & C \varepsilon_k \ensuremath{\operatorname{Area}} (\Sigma_{\varepsilon_{k}} \cap
  K) \to 0, 
\end{align}
as $k \to \infty$ where we have dropped the negative part, that is, the
integration on $\Sigma_{\varepsilon_{k}} \backslash K$. Note that $\eta_{\varepsilon_k}' < 0$
for sufficiently large $k$ and $\langle \nu_{\varepsilon_{k}}, \nabla \zeta
\rangle \leq 1$. We can conclude that the limit of $\nu_{\varepsilon_{k}}$
exists and we denote it by $\nu$. Hence $\mathrm{d} \zeta (\nu) = 1$, and so
$\nabla_{\Sigma} \zeta = 0$ by $\operatorname{Lip}\zeta\leq 1$. Therefore the surface $\Sigma$
lies in the level set $\zeta^{- 1} (t_0)$ for some $t_0 \in [- \tfrac{\ell_0}{2},
\tfrac{\ell_0}{2}]$.
\end{proof}

\subsection{Construction of a foliation}
For convenience, we define the notion of infinitesimal rigidity.
First, we show that \( \Sigma \) is infinitesimally rigid (see Lemma \ref{lm infi rig spec ric}), then we apply the inverse function theorem to obtain a foliation near \( \Sigma \). 
\begin{definition} \label{def inf rig spec ric}
  We call a surface \( \Sigma \) infinitesimally rigid if \( \Sigma \) satisfies all the following relations:
  \begin{align}
    \operatorname{Ric} (e_1, e_1) & = \operatorname{Ric} (e_2, e_2)
    = \operatorname{Ric}, \\
    - \gamma u^{- 1} \Delta u + 2\operatorname{Ric} & = \Lambda,
    \\
    \ensuremath{\operatorname{Sc}}_{\Sigma} & = 0, \\
    H & = - \gamma w_{\nu} + h, \\
    (1 - \tfrac{1}{4} \gamma) h^2 + h_{\nu} + \Lambda & = 0, \text{ } h' = - |
    \nabla h|, \text{ } \nabla \zeta = \nu, \\
    w = \log u, w_{\nu} = \tfrac{1}{2} h & , \text{ } h \text{ are constants
    along } \Sigma 
  \end{align}
  are satisfied along \( \Sigma \).
  \end{definition}

\begin{lemma} \label{lm infi rig spec ric}
  Let $\Omega$ be constructed in Lemma \ref{lm existence minimiser spectral
  ricci}, then $\Sigma = \partial \Omega$ is infinitesimally rigid.
\end{lemma}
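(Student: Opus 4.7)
The plan is to saturate every inequality used in the proof of Theorem \ref{Thm-Ricci} by first testing the stability of the warped $\mu$-bubble $\Omega$ from Lemma \ref{lm existence minimiser spectral ricci} against $\psi = 1$, and then---after extracting pointwise equalities---re-testing with a first eigenfunction on the torus. Starting from Lemma \ref{lm second var rewrite}, I would apply the Zhu-type identity \eqref{eq:zhu 5.2}, the spectral Ricci bound \eqref{eq spectral ricci}, and the pointwise eigenvalue inequality $\operatorname{Ric}(e_i, e_i) \geq \operatorname{Ric}$, then substitute $H = -\gamma w_\nu + h$ and complete the square to obtain
\begin{align}
-H^2 - \gamma H w_\nu - \gamma w_\nu^2 = -\gamma \bigl(w_\nu - \tfrac{h}{2}\bigr)^2 - \bigl(1 - \tfrac{\gamma}{4}\bigr) h^2.
\end{align}
With $\psi = 1$ the stability inequality collapses to
\begin{align}
0 \leq \int_\Sigma \operatorname{Sc}_\Sigma - \int_\Sigma \Bigl[ \bigl(\gamma - \tfrac{\gamma^2}{4}\bigr) |\nabla_\Sigma w|^2 + \gamma \bigl(w_\nu - \tfrac{h}{2}\bigr)^2 + \bigl(1 - \tfrac{\gamma}{4}\bigr) h^2 + h_\nu + \Lambda + P \Bigr],
\end{align}
where $P := -\gamma u^{-1} \Delta u + \operatorname{Ric}(e_1, e_1) + \operatorname{Ric}(e_2, e_2) - \Lambda \geq 0$ collects the defects of the spectral and eigenvalue inequalities. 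Because the constructed $h = \eta(\zeta)$ satisfies the ODE \eqref{eq ode spec ric} and $|\nabla \zeta| \leq 1$, the bracket $(1 - \gamma/4) h^2 + h_\nu + \Lambda$ is also pointwise non-negative.

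Next I would restrict attention to the connected component $\Sigma_0 \subseteq \zeta^{-1}(t_0)$ produced at the end of Lemma \ref{lm existence minimiser spectral ricci}, which equals the full torus slice $T^2$. Gauss-Bonnet gives $\int_{\Sigma_0} \operatorname{Sc}_{\Sigma_0} = 0$, so every non-negative integrand above must vanish pointwise on $\Sigma_0$. This simultaneously produces $\nabla_{\Sigma_0} w = 0$, the spectral Ricci equality $-\gamma u^{-1} \Delta u + 2\operatorname{Ric} = \Lambda$ together with $\operatorname{Ric}(e_1, e_1) = \operatorname{Ric}(e_2, e_2) = \operatorname{Ric}$, the boundary identity $w_\nu = h/2$ (so $H = (1 - \gamma/2) h$ via the warped $\mu$-bubble equation), and $(1 - \gamma/4) h^2 + h_\nu + \Lambda = 0$. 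Combining the latter with the ODE for $\eta$ and $|\nabla \zeta| \leq 1$ forces $\langle \nabla \zeta, \nu \rangle = 1$ and hence $\nabla \zeta = \nu$ along $\Sigma_0$; consequently $\zeta$ and $h$ are constant on $\Sigma_0$, $\Sigma_0$ is a level set of $h$, and $h' = -|\nabla h|$.

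The only remaining item of Definition \ref{def inf rig spec ric} is pointwise vanishing of $\operatorname{Sc}_{\Sigma_0}$, which is not directly visible from the $\psi = 1$ test. To obtain it, I would feed every equality already proven back into the full second variation with an arbitrary test function $\psi$. The cross term $\gamma \psi \langle \nabla_{\Sigma_0} w, \nabla_{\Sigma_0} \psi \rangle$ vanishes because $\nabla_{\Sigma_0} w = 0$, the $h$-dependent boundary contributions cancel via the saturated ODE, and the stability reduces to the clean operator inequality
\begin{align}
0 \leq \int_{\Sigma_0} |\nabla_{\Sigma_0} \psi|^2 + \int_{\Sigma_0} \operatorname{Sc}_{\Sigma_0} \psi^2
\end{align}
on the 2-torus. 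Taking a positive first eigenfunction $\psi_1$ of $-\Delta_{\Sigma_0} + \operatorname{Sc}_{\Sigma_0}$ with eigenvalue $\lambda_1 \geq 0$, the identity $\int \Delta \psi_1 / \psi_1 = \int |\nabla \psi_1|^2/\psi_1^2$ together with Gauss-Bonnet forces $\lambda_1 = 0$ and $\psi_1$ constant, whence $\operatorname{Sc}_{\Sigma_0} \equiv 0$ pointwise. The main obstacle I expect is precisely this last upgrade: the $\psi = 1$ test only controls $\operatorname{Sc}_{\Sigma_0}$ in the mean, and the Kazdan-Warner-type positive-eigenfunction argument on $T^2$ is required to convert this to pointwise vanishing; some care is also needed to verify that the component $\Sigma_0$ provided by Lemma \ref{lm existence minimiser spectral ricci} really fills the level torus so that $\chi(\Sigma_0) = 0$, and to bookkeep which inequalities in the proof of Theorem \ref{Thm-Ricci} become pointwise equalities.
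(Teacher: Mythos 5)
Your proposal is correct and follows the same overall strategy as the paper's proof: test the stability inequality against $\psi = 1$, apply Gauss--Bonnet and the ODE for $\eta$ to force every intermediate inequality into an equality, and then run a second spectral argument to upgrade $\int_\Sigma \operatorname{Sc}_\Sigma = 0$ to pointwise vanishing. Two cosmetic differences are worth noting. First, you work with the \emph{pre}-Cauchy--Schwarz form of the stability inequality from Lemma~\ref{lm second var rewrite}, retaining the terms $(\gamma - \tfrac{\gamma^2}{4})\lvert\nabla_\Sigma w\rvert^2$, $\gamma(w_\nu - h/2)^2$ and the defect $P$ explicitly; this makes the single $\psi = 1$ test yield all the saturations $\nabla_\Sigma w = 0$, $w_\nu = h/2$, $\operatorname{Ric}(e_i,e_i) = \operatorname{Ric}$, etc.\ at once, whereas the paper works with the Cauchy--Schwarz'd form $B(\psi,\psi)$ from \eqref{eq ineq with L B}, proves $\operatorname{Sc}_\Sigma = 0$, $\nabla\zeta = \nu$ and $(1-\gamma/4)h^2 + h_\nu + \Lambda = 0$ directly, and then appeals to ``tracing back'' for the remaining relations---your route spells out what that tracing back amounts to. Second, for $\operatorname{Sc}_\Sigma = 0$ the paper uses the slightly shorter observation that $B(1,1) = 0$ minimizes the non-negative form $B$, so constants lie in the kernel of $L$, hence $L1 = \operatorname{Sc}_\Sigma - Q = \operatorname{Sc}_\Sigma = 0$; your positive-first-eigenfunction argument for $-\Delta_\Sigma + \operatorname{Sc}_\Sigma$ on the flat-Euler-characteristic torus is the Fischer--Colbrie--Schoen route and is equally valid, but involves an extra integration-by-parts identity. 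On the $\chi(\Sigma_0) = 0$ point you flag: the paper's route is to use $\chi(\Sigma) \le 0$ a priori (``by construction'') and then force $\chi(\Sigma) = 0$ out of the Gauss--Bonnet chain, which works on all of $\Sigma$ at once; your topological route (that $\Sigma_0$ is an open-and-closed $2$-submanifold of the connected level torus $\zeta^{-1}(t_0)$ and hence equals it) also closes the gap, but this uses that Lemma~\ref{lm existence minimiser spectral ricci} has already located $\Sigma_0$ inside a level set, and it establishes rigidity only on that component rather than on all of $\partial\Omega$ as the paper's $\chi(\Sigma) \le 0$ argument does.
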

\begin{proof}
  First, according to \eqref{stability for h}, $\Sigma$ satisfies the
following inequality
\begin{align}
  0 \leq & (1 + \tfrac{1}{4} \gamma (1 - \gamma / 4)^{- 1}) \int_{\Sigma}
  | \nabla_{\Sigma} \psi |^2 + \int_{\Sigma}
  \operatorname{Sc}_{\Sigma} \psi^2 \\
  & \quad - \int_{\Sigma} \left[ (1 - \tfrac{1}{4} \gamma) h^2 + h_{\nu} +
  \Lambda \right] \psi^2 = :-\int_{\Sigma} \psi L \psi =: B (\psi, \psi)
  \label{eq ineq with L B} 
\end{align}
for all $\psi \in C^{\infty} (\Sigma)$. Here,
\[ L = - (1 + \tfrac{1}{4} \gamma (1 - \gamma / 4)^{- 1}) \Delta_{\Sigma}
   +\ensuremath{\operatorname{Sc}}_{\Sigma} - \left[ (1 - \tfrac{1}{4} \gamma)
   h^2 + h_{\nu} + \Lambda \right] . \]
In the following, we show that $\ensuremath{\operatorname{Sc}}_{\Sigma} = 0$, $\nabla \zeta = \nu$
and $(1 - \tfrac{1}{4} \gamma) h^2 + h_{\nu} + \Lambda = 0$ using a now
standard argument due to \cite{fischer-colbrie-structure-1980}. 
By taking $\psi = 1$ on $\Sigma$ and applying the Gauss-Bonnet theorem in
\eqref{eq ineq with L B},
\[ 4 \pi \chi (\Sigma) \geq \int_{\Sigma}
   \ensuremath{\operatorname{Sc}}_{\Sigma} \geq \int_{\Sigma} \left[ (1 -
   \tfrac{1}{4} \gamma) h^2 + h_{\nu} + \Lambda \right] \geq 0. \]
By construction, the Euler characteristic of $\Sigma$ satisfies the bound
$\chi (\Sigma) \leq 0$. The last inequality follows from
$h_{\nu} \geq \eta' | \nabla \zeta| \geq \eta'$ and the ODE \eqref{eq ode spec ric}.
Hence all inequalities in the above
have to be equalities, and we obtain that $\chi (\Sigma) = 0$ and $\nabla \zeta =
\nu$. By considering $\psi = 1$ in $B (\psi, \psi) \geq 0$, we obtain
that $B (1, 1) = 0$. Hence, constants are the first eigenfunction of the
operator $L$ with zero as the eigenvalue, that is, $L 1 = 0$. Hence
$\ensuremath{\operatorname{Sc}}_{\Sigma} = 0$. 

The rest of the infinitesimal rigidity
follows easily by tracing back the derivation of \eqref{eq ineq with L B}.
\end{proof}

Now we use the inverse function theorem to construct a foliation near \( \Sigma \).

\begin{lemma}\label{Lem-foliation-construct}
  Let \( \Omega \) be a stable critical point of $E(\Omega)$ and \( \Sigma= \partial \Omega \). We can construct a local foliation $\{\Sigma_{t}\}_{-\epsilon< t< \epsilon}$ such that $\Sigma_{t}$ is of
  constant \( H+\gamma w_{\nu} -h \), $\Sigma_{0}=\Sigma$,
  \begin{enumerate}
      \item each $\Sigma_{t}$ is a graph over $\Sigma$ with graph function $\rho_{t}$ along outward unit normal vector field $\nu$ such that
      \begin{equation}
          \left.\frac{\partial \rho_{t}}{\partial t}\right|_{t=0}=1 \text{ and } \frac{1}{\operatorname{vol}(\Sigma)}\int_{\Sigma}\rho_{t}dv=t;
      \end{equation}
      \item and $H_{t}+\gamma\omega_{\nu}-h$ is constant on $\Sigma_{t}$.
  \end{enumerate}
  Here, \( d v \) is the volume element of \( \Sigma \) and $H_{t}$ is the mean curvature of $\Sigma_{t}$.
\end{lemma}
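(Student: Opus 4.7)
The plan is to apply the inverse function theorem in Hölder spaces to a normal-graph parametrization of surfaces near $\Sigma$. For small $\rho \in C^{2,\alpha}(\Sigma)$, set $\Sigma_\rho := \{\exp_x(\rho(x)\nu(x)) : x \in \Sigma\}$, with induced mean curvature $H_\rho$ and outward unit normal $\nu_\rho$, write $w = \log u$, and consider the prescribed-mean-curvature-type functional
\begin{equation}
   \mathcal{F}(\rho) := H_\rho + \gamma\langle\nabla w,\nu_\rho\rangle - h,
\end{equation}
pulled back to $\Sigma$ via the normal exponential map. Every $C^{2,\alpha}$-nearby surface is of the form $\Sigma_\rho$, and I would seek the family $\rho_t$ in the form $\rho_t = t + \eta(t)$ with $\eta(t) \in C^{2,\alpha}(\Sigma)$ satisfying $\int_\Sigma \eta(t)\, dv = 0$, so that the normalization $\operatorname{vol}(\Sigma)^{-1}\int_\Sigma \rho_t\, dv = t$ is built in. The second assertion of the lemma then translates into requiring that $\mathcal{F}(\rho_t)$ equals a (spatially) constant function $C(t)$ on $\Sigma$.

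The key step is the linearization of $\mathcal{F}$ at $\rho = 0$. A direct calculation using the standard formulas for the variation of the mean curvature and of the normal vector gives
\begin{equation}
   D\mathcal{F}|_0 \phi = -\Delta_\Sigma \phi - \bigl(|A|^2 + \ensuremath{\operatorname{Ric}}(\nu,\nu)\bigr)\phi + \gamma (\nabla^2 w)(\nu,\nu)\phi - \gamma\langle\nabla_\Sigma w, \nabla_\Sigma \phi\rangle - h_\nu \phi,
\end{equation}
whose weighted quadratic form $\int_\Sigma \phi\cdot (D\mathcal{F}|_0 \phi)\, u^\gamma\, dv$ is, after the substitution $\psi = u^{\gamma/2}\phi$ used in Lemma \ref{lm second var rewrite}, exactly the symmetric form $\int_\Sigma \psi L \psi\, dv$ of the operator $L$ appearing in the proof of Lemma \ref{lm infi rig spec ric}. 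Infinitesimal rigidity from Lemma \ref{lm infi rig spec ric} gives $\ensuremath{\operatorname{Sc}}_\Sigma = 0$, $(1-\tfrac{\gamma}{4})h^2 + h_\nu + \Lambda = 0$, and $w$ constant along $\Sigma$, so the zeroth-order potential of $L$ vanishes identically and $L$ reduces to a strictly positive constant multiple of $-\Delta_\Sigma$ acting on the flat $2$-torus $\Sigma$. Hence $\ker L = \mathbb{R}\cdot 1$, and since $u^{\gamma/2}$ is constant along $\Sigma$ the substitution $\psi \leftrightarrow \phi$ is a bijection between constants, so $\ker D\mathcal{F}|_0 = \mathbb{R}\cdot 1$ as well. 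Consequently the operator
\begin{equation}
   \phi \longmapsto D\mathcal{F}|_0\phi - \operatorname{vol}(\Sigma)^{-1}\int_\Sigma D\mathcal{F}|_0 \phi \, dv
\end{equation}
is a Fredholm isomorphism from the mean-zero $C^{2,\alpha}$-functions to the mean-zero $C^{0,\alpha}$-functions.

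Applying the implicit function theorem to the map
\begin{equation}
   (\eta, t) \longmapsto \mathcal{F}(t+\eta) - \operatorname{vol}(\Sigma_{t+\eta})^{-1}\int_{\Sigma_{t+\eta}}\mathcal{F}(t+\eta)
\end{equation}
on a neighbourhood of $(0,0)$ in the product of the mean-zero subspace with $\mathbb{R}$ then produces a unique smooth family $t \mapsto \eta(t)$ with $\eta(0) = 0$, for which $\mathcal{F}(\rho_t)$ is constant on $\Sigma_{\rho_t}$ with $\rho_t := t + \eta(t)$; this yields the foliation. Differentiating the equation at $t = 0$ gives $D\mathcal{F}|_0(1+\eta'(0)) = C'(0)$, and because $1 \in \ker D\mathcal{F}|_0$ we conclude $D\mathcal{F}|_0 \eta'(0) = C'(0)$ with $\eta'(0)$ mean-zero; invertibility on the mean-zero subspace forces $\eta'(0) = 0$ and hence $\partial_t \rho_t|_{t=0} = 1$. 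The principal technical obstacle I anticipate is the careful computation of $D\mathcal{F}|_0$, especially the contribution of the warping term $\gamma\langle\nabla w,\nu_\rho\rangle$, and the bookkeeping that matches the resulting quadratic form with the operator $L$ of Lemma \ref{lm infi rig spec ric}; once this identification is in place, the Fredholm invertibility on mean-zero functions and the inverse function theorem close the argument by standard means, with smoothness of $\rho_t$ in $t$ coming from standard elliptic regularity applied to the quasilinear equation $\mathcal{F}(\rho_t) = C(t)$.
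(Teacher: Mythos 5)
Your proposal takes the same route as the paper: parametrize nearby surfaces as normal graphs, study the prescribed-mean-curvature operator $\mathcal{F}(\rho)=H_\rho+\gamma\langle\nabla w,\nu_\rho\rangle-h$, show that under infinitesimal rigidity its linearization has one-dimensional kernel, and invoke the inverse/implicit function theorem on the mean-zero complement. Your final step (differentiating $\mathcal{F}(\rho_t)=C(t)$ to deduce $\eta'(0)=0$ and hence $\partial_t\rho_t|_{t=0}=1$) also matches what the paper implicitly does.

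There is, however, one incorrect intermediate claim that you should repair. You assert that the weighted quadratic form $\int_\Sigma \phi\,(D\mathcal{F}|_0\phi)\,u^\gamma\,dv$ is, after $\psi=u^{\gamma/2}\phi$, \emph{exactly} the symmetric form $\int_\Sigma \psi L\psi\,dv$ of the operator $L$ from the proof of Lemma \ref{lm infi rig spec ric}. This is false: that operator $L=-\tfrac{4}{4-\gamma}\Delta_\Sigma+\operatorname{Sc}_\Sigma-[(1-\tfrac{\gamma}{4})h^2+h_\nu+\Lambda]$ was obtained from the second variation only \emph{after} applying the Cauchy--Schwarz inequality in \eqref{eq cs 1}, using the spectral Ricci bound, and completing a square --- so $\int\psi L\psi$ is an upper bound for the weighted quadratic form, not equal to it; and even at the infinitesimally rigid $\Sigma$, where the potential vanishes and $\nabla_\Sigma w=0$, the second variation becomes $\int|\nabla_\Sigma\psi|^2$ while $\int\psi L\psi=\tfrac{4}{4-\gamma}\int|\nabla_\Sigma\psi|^2$, which still differ by that constant factor. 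The paper avoids this by substituting the infinitesimal-rigidity relations ($\operatorname{Ric}(e_1,e_1)=\operatorname{Ric}(e_2,e_2)=\operatorname{Ric}$, $-\gamma u^{-1}\Delta u+2\operatorname{Ric}=\Lambda$, $\operatorname{Sc}_\Sigma=0$, $w_\nu=\tfrac12 h$, $H=(1-\tfrac{\gamma}{2})h$, $(1-\tfrac{\gamma}{4})h^2+h'+\Lambda=0$, and $u$ constant on $\Sigma$) directly into the linearization and observing that every lower-order term cancels, leaving $D\mathcal{F}|_0\phi=-\Delta_\Sigma\phi$. This direct computation is the precise ingredient you need, and it does hold; I recommend replacing the claimed identity with that cancellation, after which your kernel and Fredholm argument and the inverse function theorem step are correct and unchanged.
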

\begin{proof}
  Let
  \[\hat{C}^{\alpha}(\Sigma)=\left\{\phi\in C^{\alpha}(\Sigma): \text{ }\int_{\Sigma}\phi dv =0\right\}\]
  for some $\alpha\in(0,1)$.
  Consider the map
  \begin{eqnarray}
    \tilde{\Phi}:C^{2,\alpha}(\Sigma)&\to& \hat{C}^{\alpha}(\Sigma)\times \mathbb{R},\\
     \rho&\mapsto& \left(H_{\rho}+\gamma\omega_{\nu}-h-\frac{1}{\operatorname{vol}(\Sigma)}{\int_{\Sigma}\left(H_{\rho}+\gamma\omega_{\nu}-h\right)dv}, \frac{1}{\operatorname{vol}(\Sigma)}{\int_{\Sigma}\rho dv}\right),
  \end{eqnarray}
  where $H_{\rho}$ is the mean curvature of the graph over $\Sigma$ with graph function $\rho$.
%
%
%
  We use the infinitesimal rigidity of \( \Sigma \) (see Definition \ref{def inf rig spec ric}) to
calculate the first variation of $H + \gamma u^{- 1} u_{\nu} - h$:
\begin{align}\label{subsec-rigidity-ana}
  & \delta_{\phi \nu} (H + \gamma u^{- 1} u_{\nu} - h) \\
  = & - \Delta_{\Sigma} \phi - (\ensuremath{\operatorname{Ric}}(\nu,\nu) + |A|^2)
  \phi - \gamma u^{- 2} u_{\nu}^2 \phi - \gamma u^{- 1} \langle \nabla_{\Sigma} u, \nabla_{\Sigma} \phi \rangle \\
  & \quad + \gamma u^{- 1} (\Delta u - \Delta_{\Sigma} u - H u_{\nu}) \phi - h_{\nu} \phi \\
  = & - \Delta_{\Sigma} \phi -\ensuremath{\operatorname{Ric}} (e_1,e_{1}) \phi
  -\ensuremath{\operatorname{Ric}} (e_2,e_{2}) \phi + \ensuremath{\operatorname{Sc}}_{\Sigma} \phi - H^2 \phi - \gamma w_{\nu}^2 \phi \\
  & \quad + \gamma u^{- 1} \phi \Delta u - \gamma u^{- 1} \phi
  \Delta_{\Sigma} u - \gamma w_{\nu} (h - \gamma w_{\nu}) \phi - h_{\nu} \phi \\
  = & - \Delta_{\Sigma} \phi - \Lambda \phi  - (h - \gamma \tfrac{1}{2}
  h)^2 \phi - \gamma (\tfrac{1}{2} h)^2 \phi \\
  & \quad  - \gamma (\tfrac{1}{2} h) (h - \tfrac{1}{2} \gamma h) \phi - h_{\nu} \phi \\
  = & - \Delta_{\Sigma} \phi - ((1 - \tfrac{1}{4} \gamma) h^2 + h' + \Lambda)
  \phi \\
  = & - \Delta_{\Sigma} \phi . 
\end{align}
Then, the linearization of $\tilde{\Phi}$
at $\rho=0$
given by
  \begin{eqnarray}
    D\tilde{\Phi}|_{\rho=0}: C^{2,\alpha}(\Sigma)\to \hat{C}^{\alpha}(\Sigma)\times \mathbb{R},\quad \psi\mapsto \left(\Delta_{\Sigma} \psi,\frac{1}{\operatorname{vol}(\Sigma)}\int_{\Sigma}\psi dv\right),
  \end{eqnarray}
  is invertible. By the inverse function theorem, we can find a family of functions $\rho_{t}:\Sigma\to \mathbb{R}$ with $t\in (-\epsilon,\epsilon)$ with the following properties:
  \begin{enumerate}
    \item the function $\rho_{t}$ satisfies $\rho_{0}\equiv0$,
    \begin{equation}\label{Eqn-construct-rho}
      \left.\frac{\partial \rho_{t}}{\partial t}\right|_{t=0}\equiv 1, \text{ and } \frac{1}{\operatorname{vol}(\Sigma)}\int_{\Sigma}\rho_{t} dv =t
    \end{equation}
  \item the graphs $\Sigma_{t}$ over $\Sigma$ with the graph function $\rho_{t}$ is of constant \( H + \gamma u^{-1} u_{\nu} -h \).
  \end{enumerate}
  From \eqref{Eqn-construct-rho}, with the value of $\epsilon$ decreased a little bit, the speed $\partial_t\rho_{t}$ will be positive for \( t \in (-\epsilon, \epsilon) \),
from which it follows that the graphs $\{\Sigma_{t}\}_{t\in (-\epsilon,\epsilon)}$ form a foliation around $\Sigma$.
\end{proof}

\begin{lemma}
  Let $\tilde{\Omega}_{t}$ be the region enclosed by $\Sigma_{t}$ and $\Sigma$, and 
  \begin{eqnarray}
\Omega_{t}=\begin{cases}
    \Omega \cup \tilde{\Omega}_{t},\mbox{ if } 0<t< \epsilon,\\
    \Omega \setminus \tilde{\Omega}_{t},\mbox{ if } -\epsilon <t<0.
\end{cases}    
\end{eqnarray} 
Then $\Omega_{t}$ is also a minimiser of $E(\Omega)$.
\end{lemma}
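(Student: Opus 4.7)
The plan is to deduce that $\Omega_t$ is a critical point—equivalently, the constant $c_t := (H_t + \gamma u^{-1} u_{\nu_t} - h)|_{\Sigma_t}$ supplied by Lemma~\ref{Lem-foliation-construct} vanishes—and that the second variation at $\Omega_t$ is non-negative. By Lemma~\ref{lem-first-variation} applied along the foliation,
\[
A'(t) := \tfrac{d}{dt} E(\Omega_t) = c_t \, W(t),
\]
where $W(t) = \int_{\Sigma_t} u^\gamma \phi_t$ with $\phi_t > 0$ the normal speed of the foliation, positive for $|t|$ small because $\partial_t \rho_t|_{t=0} \equiv 1$. Since $\Omega_0 = \Omega$ is a minimiser of $E$ on $\mathcal{C}$ (Lemma~\ref{lm existence minimiser spectral ricci}) and each $\Omega_t\in\mathcal{C}$, the inequality $A(t) \ge A(0)$ forces $A'(0)=0$, and hence $c_0=0$, recovering the critical point property of $\Omega_0$.

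To upgrade $c_0=0$ to $c_t\equiv 0$, I would exploit the structural identity $L_t \phi_t = c'_t$ on each leaf, where $L_t$ is the linearisation of $H + \gamma u^{-1} u_\nu - h$ on $\Sigma_t$, combined with two inputs at $t=0$: first, the linearisation identity $L_0 = -\Delta_\Sigma$ derived in the proof of Lemma~\ref{Lem-foliation-construct} thanks to the infinitesimal rigidity of Lemma~\ref{lm infi rig spec ric}; second, the vanishing $A''(0) = B(1,1)=0$ obtained from Lemma~\ref{lem-second-variation} and the same infinitesimal rigidity. These, combined with the minimality constraint $A(t) \ge A(0)$, should force $A$ to remain at its minimum, i.e.\ $A \equiv A(0)$ and hence $c_t\equiv 0$. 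Once $c_t\equiv 0$, the stability of $\Omega_t$ follows from continuity: at $t=0$ the stability inequality \eqref{stability for h} gives $L_0\ge 0$, and smooth dependence on $t$ keeps $L_t$ non-negative for small $|t|$, yielding the stability of $\Omega_t$ via Lemma~\ref{lem-second-variation}. Alternatively, once $A(t)\equiv A(0)$ is established, $\Omega_t$ realises $\inf_{\mathcal{C}} E$, so $\Omega_t$ is itself a minimiser and hence a stable critical point by Remark~\ref{rk stable cmp minimiser}.

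The main obstacle is rigorously propagating $c_0=0$ to $c_t\equiv 0$. The constraints $A(t)\ge A(0)$, $A'(0)=0$ and $A''(0)=0$ alone do not force $A$ to be constant (for instance $A(t)=t^4$ is consistent with all three), so one cannot simply invoke minimality and Taylor expansion. The argument must leverage the specific construction of the foliation as a family of constant-$c_t$ surfaces together with the transport of infinitesimal rigidity from $\Sigma_0$ to nearby leaves—for example, by using the identity $L_t \phi_t = c'_t$ together with the fact that $\phi_0\equiv 1$ lies in the kernel of $L_0$—to close the argument. This propagation of rigidity along the foliation is precisely the structural input that enables the warped product metric reconstruction in the remainder of the proof of Theorem~\ref{ricci rigid reformulation}.
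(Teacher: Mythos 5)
You correctly reduce the claim to showing $c_t\equiv 0$, correctly compute $A'(t) = c_t W(t)$, and---importantly---correctly flag that $A(t)\ge A(0)$ together with $A'(0)=A''(0)=0$ does \emph{not} force $A\equiv A(0)$. So the gap you identify is genuine, and your proposal as written does not close it: the structural inputs you list (the kernel identity $L_0=-\Delta_\Sigma$, infinitesimal rigidity at $t=0$, $A''(0)=0$) are all statements only at $t=0$ and cannot, by themselves, control the sign of $c_t$ for $t\neq 0$.

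What the paper actually does (Propositions~\ref{Prop-monotonic-formula} and \ref{Prop-minimiser}) is turn the linearisation identity you call $L_t\phi_t=c_t'$ into a one-sided Riccati/Gronwall inequality for $\tilde H(t):=H_t+\gamma w_{\nu_t}-h$, using the geometric hypotheses \emph{along every leaf}, not just at $t=0$. Starting from the pointwise first variation of $H+\gamma u^{-1}u_\nu-h$, one substitutes the rewrite \eqref{eq:zhu 5.2} and the spectral Ricci bound \eqref{eq spectral ricci}, completes the square in $w_{\nu_t}$, discards the non-negative terms $\tilde H_t^2$ and $(1-\tfrac{\gamma}{4})h^2+h_{\nu_t}+\Lambda$, and integrates over $\Sigma_t$ (the divergence term vanishes, and Gauss--Bonnet kills $\operatorname{Sc}_{\Sigma_t}$ since each leaf is a torus). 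This gives
\[
-\tilde H'(t)\int_{\Sigma_t}\tfrac{1}{\phi_t}\ \ge\ \tilde H(t)\int_{\Sigma_t}\left(-\gamma w_{\nu_t}+2h\right),
\]
i.e.\ $\tilde H'+\Psi(t)\tilde H\le 0$ for a continuous $\Psi$. With $\tilde H(0)=0$ this forces $\tilde H(t)\le 0$ for $t\ge 0$ and $\tilde H(t)\ge 0$ for $t\le 0$. Feeding this into $A'(t)=\tilde H(t)\int_{\Sigma_t}u^\gamma\phi_t$ yields $A(t)\le A(0)$ for all $|t|<\varepsilon$; since $\Omega_0$ minimises $E$, equality holds and every $\Omega_t$ is a minimiser, hence a stable critical point---which is precisely your ``alternatively'' route, now made rigorous. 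Your other suggestion, propagating $L_0\ge 0$ to $L_t\ge 0$ by continuity, cannot work on its own: non-negativity of the bottom eigenvalue of a Schr\"{o}dinger operator is a closed condition, not an open one, so a priori nothing prevents the first eigenvalue of $L_t$ from dipping below zero for $t\neq 0$.
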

\begin{proof}
    The first step is to prove that $H_{t}+\gamma\omega_{\nu}-h=0$ on $\Sigma_{t}$ for $t\in (-\epsilon,\epsilon)$. Then the stable minimal property of $\Omega_{t}$ follows. We do that in Proposition \ref{Prop-monotonic-formula} and \ref{Prop-minimiser}.
\end{proof}

\begin{proposition}\label{Prop-monotonic-formula}
There exists a continuous function $\Psi (t)$ such
  that
  \begin{equation}
    \tfrac{\mathrm{d}}{\mathrm{d} t} \left( \exp (\int_0^t \Psi (\tau)
    \mathrm{d} \tau) \tilde{H} \right) \leq 0 \label{eq:H tilde inequality}
  \end{equation}
  where
  \begin{equation}
    \tilde{H} = H + \gamma w_{\nu} - h. \label{H tilde}
  \end{equation}
\end{proposition}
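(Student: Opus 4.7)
The plan is to differentiate the constant value $c(t) := \tilde H|_{\Sigma_t}$ along the foliation flow and run essentially the same estimate chain as in the proof of Theorem~\ref{Thm-Ricci}, but with the leaf mean curvature now taking the value $H_t = h + c(t) - \gamma w_{\nu_t}$ in place of $h - \gamma w_\nu$. The resulting inequality, read as a differential inequality in $t$, produces the integrating factor $\exp(\int_0^t \Psi(\tau)\,d\tau)$.

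Since each $\Sigma_t$ carries the constant $\tilde H_t = c(t)$, the first variation of $\tilde H$ in the direction $\phi_t \nu_t$ with speed $\phi_t = \partial_t \rho_t$ yields $c'(t) = J_t \phi_t$ pointwise on $\Sigma_t$, where $J_t$ is the Jacobi-type operator implicit in the integrand of \eqref{weighted stability}. Multiplying by $\phi_t u^\gamma$ and integrating, the right-hand side rewrites via Lemma~\ref{lm second var rewrite} as the second-variation form with $\psi = \phi_t u^{\gamma/2}$. I would then apply, in succession, the Gauss equation \eqref{eq:zhu 5.2}, the spectral Ricci inequality $\gamma u^{-1} \Delta u \leq 2\operatorname{Ric} - \Lambda$, the algebraic identity
\[
-H_t^2 - \gamma H_t w_{\nu_t} - \gamma w_{\nu_t}^2 \leq -(1-\tfrac{\gamma}{4})(h + c(t))^2
\]
(obtained from $\gamma w_{\nu_t} = h + c(t) - H_t$), the Cauchy-Schwarz estimate \eqref{eq cs 1} on the $\nabla_{\Sigma_t} w$ cross term, and the bound $-h_\nu \leq -\eta'(\zeta) = (1-\tfrac{\gamma}{4})h^2 + \Lambda$ coming from $|\nabla\zeta|\leq 1$ and the ODE \eqref{eq ode spec ric}. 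The $\Lambda$-terms and $(1-\gamma/4)h^2$-terms then cancel precisely as in Theorem~\ref{Thm-Ricci}, leaving
\[
c'(t) \int_{\Sigma_t}\phi_t u^\gamma \leq \tfrac{4}{4-\gamma}\int_{\Sigma_t}|\nabla_{\Sigma_t}\psi|^2 + \int_{\Sigma_t}\operatorname{Sc}_{\Sigma_t}\psi^2 - (1-\tfrac{\gamma}{4}) c(t) \int_{\Sigma_t}(2h + c(t))\psi^2.
\]

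The main obstacle is that the first two terms on the right do not visibly factor through $c(t)$. To handle this I would exploit Gauss-Bonnet on the torus $\Sigma_t$, namely $\int_{\Sigma_t}\operatorname{Sc}_{\Sigma_t} = 4\pi \chi(\Sigma_t) = 0$, to write $\int \operatorname{Sc}_{\Sigma_t}\psi^2 = \int \operatorname{Sc}_{\Sigma_t}(\psi^2 - \overline{\psi^2})$ and absorb it into $\int|\nabla_{\Sigma_t}\psi|^2$ via a Poincar\'e-type inequality with constant varying continuously in $t$. Infinitesimal rigidity (Lemma~\ref{lm infi rig spec ric}) forces $\psi_0 = u^{\gamma/2}|_{\Sigma_0}$ to be constant and $\operatorname{Sc}_{\Sigma_0}=0$, so the residual vanishes at $t=0$ together with $c(t)$ (note $c(0)=0$, and the rigidity computation inside the proof of Lemma~\ref{Lem-foliation-construct} gives $c'(0)=0$ as well). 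Dividing the final estimate by $\int_{\Sigma_t}\phi_t u^\gamma$ and packaging the residual as a continuous function $R(t)$ (extended by continuity at zeros of $c(t)$ using Taylor expansion around $t=0$) produces a continuous
\[
\Psi(t) = (1-\tfrac{\gamma}{4})\,\frac{\int_{\Sigma_t}(2h + c(t))\psi^2}{\int_{\Sigma_t}\phi_t u^\gamma} + R(t)
\]
for which $c'(t) + \Psi(t) c(t) \leq 0$, integrating to \eqref{eq:H tilde inequality}.
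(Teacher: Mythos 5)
There is a genuine gap in your argument, and it is precisely the one you flag as ``the main obstacle.'' Multiplying the pointwise variation of $\tilde H$ by $\phi_t u^\gamma$ and substituting $\psi=\phi_t u^{\gamma/2}$ produces the stability-form quantity of Lemma~\ref{lm second var rewrite}, which leaves the terms
$\tfrac{4}{4-\gamma}\int_{\Sigma_t}|\nabla_{\Sigma_t}\psi|^2 + \int_{\Sigma_t}\operatorname{Sc}_{\Sigma_t}\psi^2$
on the right. These are not proportional to $c(t)=\tilde H(t)$, and a Poincar\'e inequality cannot make the first of them negative. Your proposed fix is to show the residual vanishes at $t=0$ (because $c(0)=c'(0)=0$ and $\psi_0$ is constant) and then ``extend by continuity.'' But the required inequality $c'(t)+\Psi(t)c(t)\le 0$ must hold for every $t\in(-\varepsilon,\varepsilon)$, and there is no reason the residual is a multiple of $c(t)$ away from $t=0$; if $c(t_\star)=0$ for some $t_\star\neq 0$ but the residual does not vanish there, no continuous $\Psi$ can absorb it. In short, $R(t)$ as you define it need not be finite, let alone continuous, so the differential inequality does not follow.

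The paper avoids this entirely by never passing to the integral second-variation form. Instead it reads the first variation of $\tilde H$ pointwise on $\Sigma_t$, multiplies by $\phi_t^{-1}$ (not by $\phi_t u^\gamma$), and observes that the problematic Laplacian/gradient cluster
$\phi_t^{-1}\Delta_{\Sigma_t}\phi_t + \gamma u^{-1}\Delta_{\Sigma_t}u + \gamma u^{-1}\phi_t^{-1}\langle\nabla_{\Sigma_t}u,\nabla_{\Sigma_t}\phi_t\rangle$
equals
$\operatorname{div}_{\Sigma_t}\bigl(\tfrac{\nabla_{\Sigma_t}\phi_t}{\phi_t}+\gamma\tfrac{\nabla_{\Sigma_t}u}{u}\bigr)$
plus a manifestly non-negative quadratic expression. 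Integrating over $\Sigma_t$, the divergence contributes nothing and $\int_{\Sigma_t}\operatorname{Sc}_{\Sigma_t}=0$ by Gauss--Bonnet (no $\psi^2$ weight!), so the gradient terms disappear completely. What survives is exactly a term linear in $\tilde H$, giving
$-\tilde H'(t)\int_{\Sigma_t}\phi_t^{-1}\ge \tilde H(t)\int_{\Sigma_t}(-\gamma w_{\nu_t}+2h)$,
hence $\Psi(t)=\bigl(\int_{\Sigma_t}\phi_t^{-1}\bigr)^{-1}\int_{\Sigma_t}(-\gamma w_{\nu_t}+2h)$. You should replace your integral-stability approach with this pointwise divergence-structure argument; the choice of $\phi_t^{-1}$ as the multiplier, rather than $\phi_t u^\gamma$, is the essential point you are missing.
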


\begin{proof}
    Let $\Phi : \Sigma \times (- \varepsilon,
\varepsilon) \to M$ parametrize the local foliation, $Y = \tfrac{\partial
\Phi}{\partial t}$, and $\phi_t = \langle Y, \nu_t \rangle$. Since we have shown
that $\phi_0$ is a constant. We can fix $\varepsilon$ sufficiently small so that
$\phi_t > 0$ for all $t \in (- \varepsilon, \varepsilon)$. Recall that the
first variation gives
\begin{align}
  & - \tilde{H}' (t) \\
  = & - \tfrac{\mathrm{d}}{\mathrm{d} t} (H_t + \gamma w_{\nu} - h)
  \\
  = & \Delta_{\Sigma_t} \phi_t + (\ensuremath{\operatorname{Ric}}(\nu_t,\nu_t) +
  |A_t |^2) \phi_t + \gamma u^{- 2} u_{\nu}^2 \phi_t \\
  & \quad - \gamma u^{- 1} (\Delta u - \Delta_{\Sigma_t} u - H u_{\nu})
  \phi_t + \gamma u^{- 1} \langle \nabla_{\Sigma_t} u, \nabla_{\Sigma_t}
  \phi_t \rangle \\
  & \quad + h_{\nu_t} \phi_t . 
\end{align}
Using the rewrite \eqref{eq:zhu 5.2}, the definition of $\operatorname{Ric}$,
\eqref{H tilde}, and the spectral bound \eqref{eq spectral ricci}, and with
suitable grouping of terms, we see
\begin{align}
  & - \tilde{H}' (t) \phi_t^{- 1} \\
  \geq & (\phi_t^{- 1} \Delta_{\Sigma_t} \phi_t + \gamma  u^{- 1}
  \Delta_{\Sigma_t} u + \gamma u^{- 1} \langle \nabla_{\Sigma_t} u,
  \nabla_{\Sigma_t} \phi_t \rangle \phi_t^{- 1}) \\
  & \quad + \Lambda - \ensuremath{\operatorname{Sc}}_{\Sigma_t} + h_{\nu_t} + H^2 + \gamma w_{\nu}^2 + \gamma H w_{\nu} . 
\end{align}
Inserting \eqref{H tilde} in the above and using
\begin{align}
  & (\phi_t^{- 1} \Delta_{\Sigma_t} \phi_t + \gamma  u^{- 1}
  \Delta_{\Sigma_t} u + \gamma u^{- 1} \langle \nabla_{\Sigma_t} u,
  \nabla_{\Sigma_t} \phi_t \rangle \phi_t^{- 1}) \\
  = & \ensuremath{\operatorname{div}}_{\Sigma_t} \left(
  \frac{\nabla_{\Sigma_t} \phi_t}{\phi_t} + \gamma \frac{\nabla_{\Sigma_t}
  u}{u} \right) + (1 - \tfrac{\gamma}{4}) \left| \tfrac{\nabla_{\Sigma_t}
  \phi_t}{\phi_t} \right|^2 + \gamma \left| \frac{\nabla_{\Sigma_t} u}{u} +
  \frac{\nabla_{\Sigma_t} \phi_t}{2 \phi_t} \right|^2 \\
  \geq & \ensuremath{\operatorname{div}}_{\Sigma_t} \left(
  \frac{\nabla_{\Sigma_t} \phi_t}{\phi_t} + \gamma \frac{\nabla_{\Sigma_t}
  u}{u} \right) 
\end{align}
yields
\begin{align}
  & - \tilde{H}' (t) \phi_t^{- 1} \\
  \geq & \ensuremath{\operatorname{div}}_{\Sigma_t} \left(
  \frac{\nabla_{\Sigma_t} \phi_t}{\phi_t} + \gamma \frac{\nabla_{\Sigma_t}
  u}{u} \right) \\
  & \quad + \Lambda - \ensuremath{\operatorname{Sc}}_{\Sigma_t} + h_{\nu_t} + \tilde{H}^2 + \tilde{H} (-
  \gamma w_{\nu} + 2 h) + (\gamma w_{\nu}^2 + h^2 - \gamma h w_{\nu}) .
\end{align}
Applying the elementary inequality
\[ \gamma w_{\nu}^2 + h^2 - \gamma h w_{\nu} = \gamma (w_{\nu} - \tfrac{1}{2}
   h)^2 + (1 - \tfrac{1}{4} \gamma) h^2 \geq (1 - \tfrac{1}{4} \gamma)
   h^2 \]
on the last term on the bracket and the trivial bound $\tilde{H}^2 \geq
0$, we obtain that
\begin{align}
  & - \tilde{H}' (t) \phi_t^{- 1} \\
  \geq & \tilde{H} (- \gamma w_{\nu} + 2 h)
  +\ensuremath{\operatorname{div}}_{\Sigma_t} \left( \frac{\nabla_{\Sigma_t}
  \phi_t}{\phi_t} + \gamma \frac{\nabla_{\Sigma_t} u}{u} \right) -
  \ensuremath{\operatorname{Sc}}_{\Sigma_t} \\
  & \quad + \left( \Lambda + h_{\nu_t} + (1 - \tfrac{1}{4} \gamma) h^2
  \right) . 
\end{align}
Note that $\Lambda + h_{\nu_t} + (1 - \tfrac{1}{4} \gamma) h^2 \geq 0$,
and finally,
\[ - \tilde{H}' (t) \phi_t^{- 1} \geq \tilde{H} (- \gamma w_{\nu} + 2 h)
   +\ensuremath{\operatorname{div}}_{\Sigma_t} \left( \frac{\nabla_{\Sigma_t}
   \phi_t}{\phi_t} + \gamma \frac{\nabla_{\Sigma_t} u}{u} \right) -
   \ensuremath{\operatorname{Sc}}_{\Sigma_t} . \]
We integrate the above on $\Sigma_t$ and we find by the divergence theorem and
Gauss-Bonnet theorem ($\Sigma_t$ is a torus since it is a small perturbation of \( \Sigma \)) that,
\[ - \tilde{H}' (t) \int_{\Sigma_t} \tfrac{1}{\phi_t} \geq \tilde{H} (t)
   \int_{\Sigma_t} (- \gamma w_{\nu} + 2 h) . \]
We set $\Psi (t) = (\int_{\Sigma_t} \tfrac{1}{\phi_t})^{- 1} \int_{\Sigma_t}
(- \gamma w_{\nu_t} + 2 h)$, then
\[ \tilde{H}' + \Psi (t) \tilde{H} \leq 0. \]
By solving this inequality, we finish the proof of the
lemma.
\end{proof}

\begin{proposition}\label{Prop-minimiser}
  Every $\Omega_t$ is a minimiser to $E (\Omega)$.
\end{proposition}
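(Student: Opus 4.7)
The plan is to exploit the sign constraint on $\tilde H$ established in Proposition \ref{Prop-monotonic-formula}, combined with the fact that $\Omega = \Omega_0$ is itself a minimiser (Lemma \ref{lm existence minimiser spectral ricci}), to show that $E(\Omega_t)$ is in fact constant in $t$. Once $E(\Omega_t) = E(\Omega)$, minimality of $\Omega_t$ is immediate, since every competitor $\Omega' \in \mathcal{C}$ is bounded below by the same value $E(\Omega)$.

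The main computation is a layer-cake formula for $E(\Omega_t) - E(\Omega)$. Parametrise the sliver between $\Sigma_0$ and $\Sigma_t$ by the foliation map $\Phi : \Sigma \times (-\epsilon, \epsilon) \to M$, and set $Y = \partial_t \Phi$, $\phi_s = \langle Y, \nu_s \rangle > 0$. Then $dV = \phi_s \, dA_s \, ds$, and the first variation of the weighted area is
\[
  \tfrac{d}{ds}\int_{\Sigma_s} u^\gamma \, dA_s = \int_{\Sigma_s} u^\gamma(H_s + \gamma w_{\nu_s}) \phi_s \, dA_s.
\]
Combining this with $\int_{\tilde \Omega_t} h u^\gamma$, I expect, after careful tracking of the orientations so that the sign works uniformly for $t > 0$ and $t < 0$, to obtain
\[
  E(\Omega_t) - E(\Omega) = \int_0^t \int_{\Sigma_s} u^\gamma \tilde H(s) \phi_s \, dA_s \, ds.
\]
Since by construction (Lemma \ref{Lem-foliation-construct}) $\tilde H(s)$ is a constant on each leaf $\Sigma_s$, this simplifies to
\[
  E(\Omega_t) - E(\Omega) = \int_0^t \tilde H(s) \Bigl( \int_{\Sigma_s} u^\gamma \phi_s \, dA_s \Bigr) ds.
\]

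Now I invoke Proposition \ref{Prop-monotonic-formula}. Because $\Sigma_0 = \partial \Omega$ is a critical point of $E$, one has $\tilde H(0) = 0$, and the monotonicity \eqref{eq:H tilde inequality} forces $\tilde H(s) \leq 0$ for $s \geq 0$ and $\tilde H(s) \geq 0$ for $s \leq 0$. In either case the integrand above has a definite sign and I conclude $E(\Omega_t) \leq E(\Omega)$. But $\Omega$ is a minimiser, so $E(\Omega_t) \geq E(\Omega)$ as well, giving equality throughout. In particular, since $\int_{\Sigma_s} u^\gamma \phi_s \, dA_s > 0$, this forces $\tilde H(s) \equiv 0$ along the entire foliation, a bonus that is useful for the subsequent rigidity. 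For any competitor $\Omega' \in \mathcal{C}$, we then have $E(\Omega') \geq E(\Omega) = E(\Omega_t)$, which is the claim.

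I do not anticipate any essential obstacle; the proof is structurally short. The only delicate points are the sign bookkeeping as $t$ crosses zero (since $\tilde \Omega_t$ is added to $\Omega$ for $t > 0$ and subtracted for $t < 0$, with opposite parametrisations by $s$), and checking that $\Omega_t$ and $\Omega$ share the same class $\mathcal{C}$ of admissible competitors; the latter is immediate because $\Omega_t \triangle \Omega \Subset \overset{\circ}{M}$ and $\mathcal{C}$ is defined through the fixed reference set $\bar \Omega$.
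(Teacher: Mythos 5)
Your proof is correct and takes essentially the same route as the paper's: both apply the first variation formula to write $\tfrac{d}{dt}E(\Omega_t)=\int_{\Sigma_t}\tilde H(t)\,u^\gamma\phi_t$, invoke the sign constraints on $\tilde H$ from Proposition \ref{Prop-monotonic-formula} to deduce $E(\Omega_t)\le E(\Omega_0)$, and conclude equality from the minimality of $\Omega_0$. Your layer-cake rewriting is merely an unpacking of that same first-variation identity, and the observation that equality forces $\tilde H\equiv 0$ is a useful bonus that the paper leaves implicit.
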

\begin{proof}
  Let \( \partial_t \) be the variation vector field of the foliation \( \{\Sigma_t \}_{t\in (-\varepsilon,\varepsilon)} \), and \( \phi_t = \langle \partial_t , \nu_t \rangle \).
    Recall that the first variation of $E$ is given in Lemma \ref{lem-first-variation}, and
\[ \tfrac{\mathrm{d}}{\mathrm{d} t} E (\Omega_t) = \int_{\Sigma_t} \tilde{H}
   (t) u^{\gamma} \phi_{t}. \]
 It follows from $\tilde{H} (0) = 0$ and
\eqref{eq:H tilde inequality} that $\tilde{H} (t) \leq 0$ for $t
\geq 0$ and $\tilde{H} (t) \geq 0$ for $t \leq 0$. So $E
(\Omega_t) \leq E (\Omega_0)$ for all $t \in (- \varepsilon,
\varepsilon)$ and hence
\[ E (\Omega_t) = E (\Omega_0)  \]
for all \( t \in (-\varepsilon, \varepsilon) \).
Hence, all the foliation analysis on $\Omega_0$ can be applied to $\Omega_t$.
Since $M$ is connected, we can conclude that $M$ is foliated by the boundaries
$\partial \Omega_t$ of the minimisers of $E
$.
\end{proof}


\subsection{Rigidity}

The foliation indicates that the band is locally isometric to $(-\epsilon,\epsilon)\times \Sigma $ with the metric $g=\phi_{t}(x)dt^2+g_{t}$, where $g_{t}$ is a flat metric on $\Sigma$, $x\in \Sigma$, and $\phi_{t}$ is the function constructed in the proof of Proposition \ref{Prop-monotonic-formula}. Since $\Omega_{t}$ is stable, the computation in the proof of Lemma \ref{Lem-foliation-construct} shows that
\[\Delta_{g_{t}}\phi_{t}(x)=0.\]
It then follows that $\phi_{t}$ is constant on each $\Sigma_{t}$.  Since 
\begin{align*}
  2\operatorname{Ric}=&\Lambda+{ \gamma u^{- 1}}  \Delta u,\\
  =&\Lambda+{ \gamma u^{- 1}}  \left(\Delta_{\Sigma}+H \partial_{t}+\partial^2_{t}-\nabla_{\partial_{t}}\partial_{t}\right) u,
\end{align*}
we see that $H$ and $u$ depend only on $t$. Therefore, $\operatorname{Ric}$ depends only on $t$.

Recall that the local orthonormal basis $\{\nu=\partial_{t},e_{1},e_{2}\}$ on $M$ is chosen such that 
\[\operatorname{Ric}(\nu,e_{\alpha})=0, \quad \operatorname{Ric}(e_{1},e_{2})=0\]
for $\alpha=1,2$. By Schoen-Yau's rewrite \eqref{eq sy rewrite pre} of the Gauss equation, using $\operatorname{Ric}(e_{1},e_{1})=\operatorname{Ric}(e_{2},e_{2})=\operatorname{Ric}$, and by the definition of scalar curvature
\( \operatorname{Sc}_g = \operatorname{Ric}(\nu,\nu) + 2\operatorname{Ric} \),
we have
\begin{align*}
0=&\operatorname{Sc_{g}}-2\operatorname{Ric}(\nu,\nu)+H^2-|A|^2    \\
=&-\operatorname{Sc_{g}}+4 \operatorname{Ric}+H^2-|A|^2.
\end{align*}
By the contracted Bianchi identity, 
\begin{align*}
    \frac{1}{2}e_{1}(\operatorname{Ric}(\nu,\nu))=&\frac{1}{2} e_{1} (\operatorname{Sc})\\
    =&\nabla_{\nu}\operatorname{Ric}(\nu,e_{1})+\sum_{\alpha=1}^{2}\nabla_{e_{\alpha}}\operatorname{Ric}(e_{1},e_{\alpha})\\
    =&-\operatorname{Ric}(\nabla_{\nu}\nu,e_{1})-\operatorname{Ric}(\nabla_{\nu}e_{1},\nu)-\sum_{\alpha=1}^{2}\operatorname{Ric}(\nabla_{e_{\alpha}}e_{1},e_{\alpha})\\
    &\quad-\sum_{\alpha=1}^{2}\operatorname{Ric}(e_{1},\nabla_{e_{\alpha}}e_{\alpha})\\
    =&0.
\end{align*}
Similarly, $e_{2}(\operatorname{Ric}(\nu,\nu))=0$. Thus, $\operatorname{Ric}(\nu,\nu)$ only depends on $t$, and so does $|A|^2$. By 
\begin{align*}
    R(\nu,e_{1},e_{1},\nu)+R(e_{2},e_{1},e_{1},e_{2})=&\operatorname{ Ric}(e_{1},e_{1})=\operatorname{Ric}\\
    R(\nu,e_{2},e_{2},\nu)+R(e_{2},e_{1},e_{1},e_{2})=&\operatorname{ Ric}(e_{2},e_{2})=\operatorname{Ric},
\end{align*}
we see $R(\nu,e_{1},e_{1},\nu)=R(\nu,e_{2},e_{2},\nu)$.

However $\operatorname{Ric}(\nu,\nu)=R(\nu,e_{1},e_{1},\nu)+R(\nu,e_{2},e_{2},\nu)$, then $R(\nu,e_{1},e_{1},\nu)$ and $R(\nu,e_{2},e_{2},\nu)$ also only depend on $t$.
Again because the frame \( \{e_1,e_2\} \) is arbitrary, we can conclude that \( R(e_1,\nu,\nu,e_2 )=0 \).
We also have the following equation 
\begin{align*}
    \begin{cases}
        \partial_{t}A-A^2=-R(\cdot,\nu,\nu,\cdot);\\
        \partial_{t}g_{t}=2A.
    \end{cases}
\end{align*}
Because of the above ODE, and that \( R(e_1,\nu,\nu,e_1) = R(e_2,\nu,\nu,e_2) \) depend only on \( t \) and that
\( R(e_1,\nu,\nu,e_2) =0 \), we can write the metric 
$g=dt^2+\phi^2_{1} ds_{1}^2+\phi_{2}^2ds_{2}^2$ for some positive functions \( \phi_1 \) and \( \phi_2 \)
which only depend 
on \( t \).
The calculation of explicit forms of \( \phi_1 \) and \( \phi_2 \) are given in Appendix \ref{subsec-determine-the-metric}.

\section{Band width estimate with the spectral scalar curvature condition}\label{sec spectral scalar}

In this section, we prove Theorems \ref{Thm-scalar} and \ref{thm rigid scalar}.
The proof of Theorem \ref{thm rigid scalar} is similar to that of Theorem \ref{ricci rigid reformulation},
in particular for three dimensions. However, we need to modify some of the arguments to adapt to higher dimensions.

\subsection{Proof of the band width estimate}
Before we proceed, we define an auxiliary function 
\begin{equation} \label{eq eta in spec scalar}
  \eta(t):=-
\frac{\sqrt{\Lambda}}{\sqrt{\frac{- n \gamma  + \gamma  + 2
n}{4(n-1)+2\gamma(2-n)}}}\tan\left( t \sqrt{\tfrac{- n \gamma  + \gamma  + 2
n}{4(n-1)+2\gamma(2-n)}\Lambda}\right),
\end{equation}
we can check that \( \eta \) satisfies the ODE
\begin{equation} \label{eq eta ode spec scalar}
  \frac{- n \gamma  + \gamma  + 2
n}{4(n-1)+2\gamma(2-n)} \eta^2 + \eta'
+  \Lambda =0 \text{ and } \eta'<0. \end{equation}

\begin{proof}[Proof of Theorem \ref{Thm-scalar}]
  We prove by contradiction. We assume that
  \[\operatorname{width}_{g}(M)> 2\ell_{1}:=\frac{\pi}{\sqrt{\frac{- n \gamma  + \gamma  + 2
          n}{4(n-1)+2\gamma(2-n)}\Lambda}},\]
  then there exists a small $\delta>0$ such that
  the band
  \[ M\setminus\{x\in M: \text{ }\operatorname{dist}_{g}(x,\partial M)<\delta\}\]
  is of length greater than \( 2 \ell_1 \).
  Then we can further perturb this band into a smooth band \( \tilde M \)
  so that the band width of \(  \tilde M \) is of length greater than \( 2\ell_{1} \). 
  By \cite[Lemma 7.2]{cecchini-scalar-2024}, there exists a smooth function $\chi:\tilde{M}\to [-\ell_{1} ,  \ell_{1}] $
on $\tilde{M}$ such that the Lipschitz constant $\operatorname{Lip}\chi<1$, $\chi^{-1}(- \ell_{1})=\partial_{-}\tilde{M}$ and $\chi^{-1}(  \ell_{1})=\partial_{+}\tilde{M}$. Then the function  
\[h:=\eta \circ \chi \]
on $\tilde{M}\setminus \partial \tilde{M}$ tends to $-\infty$ as $x\to \partial_{+}\tilde{M}$ and to $\infty$ as $x\to \partial_{-}\tilde{M}$, where \( \eta \) is the function given in \eqref{eq eta in spec scalar} and satisfies \eqref{eq eta ode spec scalar}.
We also have that $u$ is bounded below by a positive number on $\tilde{M}$. 
  Using Lemma \ref{lem-warped-bubble}, we have a minimising warped $\mu$-bubble $\Omega$ such that $H=-\gamma u^{- 1} u_{\nu} + h$ along \( \partial \Omega \) and the
  second variation
$\tfrac{\mathrm{d}^2}{\mathrm{d} t^2} E (\Omega_t) |_{t = 0}$ is non-negative for any smooth variations of $\Omega$ such that $\Omega_0 = \Omega$.

Given any function $\psi \in C^{\infty}(\partial \Omega)$, we consider the variations $ \Omega_t$ of $\Omega$ such that normal speed of the variations $\partial \Omega_{t}$ is $\phi=u^{\gamma/2}\psi$ and
we can firstly rewrite the second variation as in
Lemma \ref{lm second var rewrite} and we now rewrite the second variation further based on Lemma \ref{lm second var rewrite}.

Using the following Schoen-Yau's trick of the Gauss equation, we have 
\begin{align}
  |A|^2 +\ensuremath{\operatorname{Ric}} (\nu,\nu) &= \tfrac{1}{2} (\operatorname{Sc}_g - \operatorname{Sc}_{\partial \Omega} + |A|^2 + H^2) \label{eq sy rewrite pre} \\
                                                 & \geq \tfrac{1}{2} (\operatorname{Sc}_g - \operatorname{Sc}_{\partial \Omega} + \frac{n}{n-1} H^2) .\label{eq:sy rewrite}
\end{align}
Using the above, \eqref{eq spectral scalar} and that $H_{\partial \Omega} = - \gamma w_{\nu} + h$ (recall that $w=\log u$) in
Lemma \ref{lm second var rewrite}, we arrive
\begin{align}
0 \leq & \int_{\partial \Omega} | \nabla_{\partial \Omega} \psi |^2 +
\int_{\partial \Omega} [\gamma \psi \langle \nabla_{\partial \Omega} w,
\nabla_{\partial \Omega} \psi \rangle + (\tfrac{\gamma^2}{4} - \gamma)
\psi^2 | \nabla_{\partial \Omega} w|^2] \\ \label{eq similar back ref}
& \quad + \int_{\partial \Omega} [\tfrac{1}{2} \operatorname{Sc}_{\partial \Omega} -
\tfrac{n}{2 (n - 1)} (- \gamma w_{\nu} + h)^2 -  \Lambda] \psi^2
\\ 
& \quad - \int_{\partial \Omega} [\gamma (- \gamma w_{\nu} + h) w_{\nu} +
h_{\nu} + \gamma w_{\nu}^2] \psi^2 \\ 
= & \int_{\partial \Omega} | \nabla_{\partial \Omega} \psi |^2 +
\tfrac{1}{2} \operatorname{Sc}_{\partial \Omega} \psi^2 + \int_{\partial \Omega} [\gamma
\psi \langle \nabla_{\partial \Omega} w, \nabla_{\partial \Omega} \psi
\rangle + (\tfrac{\gamma^2}{4} - \gamma) \psi^2 | \nabla_{\partial \Omega}
w|^2] \\
& \quad - \int_{\partial \Omega} \left[ (\tfrac{n}{2 (n - 1)} \gamma^2 -
\gamma^2 + \gamma) w_{\nu}^2 - \tfrac{1}{n - 1} \gamma h w_{\nu} +
\tfrac{n}{2 (n - 1)} h^2 + h_{\nu}  + \Lambda \right] \psi^2 .
\end{align}
Since $\gamma^2 / 4 - \gamma < 0$, so by Cauchy-Schwarz inequality,
\[ \int_{\partial \Omega} [\gamma \psi \langle \nabla_{\partial \Omega} w,
   \nabla_{\partial \Omega} \psi \rangle + (\tfrac{\gamma^2}{4} - \gamma)
   \psi^2 | \nabla_{\partial \Omega} w|^2] \leq \tfrac{1}{4} \gamma (1 -
   \tfrac{\gamma}{4})^{- 1} \int_{\partial \Omega} | \nabla_{\partial \Omega}
   \psi |^2  \]
 as done in \eqref{eq cs 1}
and 
\begin{align}
& (\tfrac{n}{2 (n - 1)} \gamma^2 - \gamma^2 + \gamma) w_{\nu}^2 -
\tfrac{1}{n - 1} \gamma h w_{\nu} + \tfrac{n}{2 (n - 1)} h^2 \\
\geq & \left[ - \frac{\gamma^2}{4 (\tfrac{n}{2 (n - 1)} \gamma^2 -
\gamma^2 + \gamma) (n - 1)^2} + \tfrac{n}{2 (n - 1)} \right] h^2 \\
= & \frac{- n \gamma  + \gamma  + 2 n}{4 (\tfrac{n}{2 (n - 1)} \gamma  -
\gamma  + 1) (n - 1)} h^2 >0.
\end{align}
The positive sign in the last line is due to the assumption $0 < \gamma < \frac{2n}{n-1}$. Therefore,
\begin{align}
0 \leq & \tfrac{4}{4-\gamma}
\int_{\partial \Omega} | \nabla_{\partial \Omega} \psi |^2 + \tfrac{1}{2}
\int_{\partial \Omega} \operatorname{Sc}_{\partial \Omega} \psi^2 \\
& \quad - \int_{\partial \Omega} \left[ \frac{- n \gamma  + \gamma  + 2
n}{4(n-1)+2\gamma(2-n)} h^2 + h_{\nu}
+  \Lambda \right] \psi^2 \label{eq bilinear energy nonnegative}
\end{align}
by the estimate \( h_{\nu} > -|\nabla h| = \eta'\circ \chi \).
Then
\begin{align}
 \tfrac{4}{4-\gamma}
\int_{\partial \Omega} | \nabla_{\partial \Omega} \psi |^2 + \tfrac{1}{2}
\int_{\partial \Omega} \operatorname{Sc}_{\partial \Omega} \psi^2 
>& 
\int_{\partial \Omega} \left[ \frac{- n \gamma  + \gamma  + 2
n}{4(n-1)+2\gamma(2-n)} h^2 +h'
+  \Lambda \right] \psi^2 \\
=&0.
\end{align}
Because $\frac{4(n-2)}{n-3}>\frac{8}{4-\gamma}$ for $3<n\leq 7$, then the operator $-\frac{4(n-2)}{n-3}\Delta_{\partial\Omega}+\operatorname{Sc}_{\partial \Omega}$ is positive. Let \( u>0 \) be its first eigenfunction.
Since $\partial \Omega$ is homologous to a level set of \( \zeta \), we can choose a connected component of \( \partial \Omega \) such that the projection of \( \Sigma \) to
the \( T^{n-1} \)-factor is a map of non-zero degree.
By the conformal change, \( u^{\tfrac{4}{n-3}}g_{|_{\partial \Omega}} \) is a metric of
positive scalar curvature metric on $\partial \Omega$. 
This is a contradiction to the non-existence of a metric of positive scalar curvature on \( T^{n-1} \).
If $n=3$, we can invoke similar arguments as in Theorem \ref{Thm-Ricci}, which is also a contradiction.
\end{proof}

\subsection{Existence of a non-trivial minimiser}

In this section, we prove our scalar curvature rigidity result (Theorem \ref{thm rigid scalar}).
We construct a non-trivial minimiser to the functional
\eqref{eqn-omega} by modifying an argument of J. Zhu
\cite{zhu-rigidity-2020}.


\begin{lemma} \label{scalar stable existence}
  Let \( M \) be as in Theorem \ref{thm rigid scalar}, then there exists a stable critical point \( \Omega \)
  of the weighted functional \eqref{eqn-omega} which admits a map to \( T^{n-1} \) of non-zero degree.
  \end{lemma}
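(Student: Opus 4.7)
The plan is to mirror the proof of Lemma \ref{lm existence minimiser spectral ricci}, replacing the two-dimensional Gauss--Bonnet step with a higher-dimensional conformal-Laplacian / PSC-obstruction argument that is already implicit in the proof of Theorem \ref{Thm-scalar}. First I would rescale $\zeta$ so that $\zeta(\partial_\pm M)=\pm\ell_1$ and $|\nabla\zeta|\le 1$, and let $\eta$ solve
\[
\frac{-n\gamma+\gamma+2n}{4(n-1)+2\gamma(2-n)}\eta^{2}+\eta'+\Lambda=0,\qquad \eta'<0,
\]
on $(-\ell_1,\ell_1)$ with $\eta(t)\to\mp\infty$ as $t\to\pm\ell_1^{\mp}$. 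Following Zhu's perturbation trick, I pick an odd smooth $\alpha:[-\ell_1,\ell_1]\to\mathbb{R}$ with $\alpha>0$ on $(0,\ell_1]$, $\alpha'>0$ on $[0,\ell_1/2)$ and $\alpha'<0$ on $(\ell_1/2,\ell_1]$, set $\eta_\varepsilon(t)=\eta(t+\varepsilon\alpha(t))$ on a maximal interval $(-T_\varepsilon,T_\varepsilon)$, and $h_\varepsilon(x)=\eta_\varepsilon(\zeta(x))$. Using Sard's lemma to select $\varepsilon$ for which $\zeta^{-1}(\pm T_\varepsilon)$ is regular, Lemma \ref{lem-warped-bubble} will provide a stable minimiser $\Omega_\varepsilon\subset\zeta^{-1}((-T_\varepsilon,T_\varepsilon))$.

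The heart of the proof will be the non-degeneracy claim that $\partial\Omega_\varepsilon$ meets $K=\{|\zeta|\le\ell_1/2\}$. Suppose instead that $\partial\Omega_\varepsilon$ lies in the outer region. Running the computation from the proof of Theorem \ref{Thm-scalar} with $h_\varepsilon$ in place of $h$, the stability inequality \eqref{eq bilinear energy nonnegative} holds; and via $\nabla_{\nu}h_\varepsilon\ge\eta_\varepsilon'(\zeta)$, the integrand in its last term is bounded below by
\[
\frac{-n\gamma+\gamma+2n}{4(n-1)+2\gamma(2-n)}\eta_\varepsilon^{2}+\eta_\varepsilon'+\Lambda=\varepsilon\alpha'(\zeta)\eta'(\zeta+\varepsilon\alpha(\zeta)),
\]
which is strictly positive on the outer region by construction. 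Hence the operator $-\tfrac{8}{4-\gamma}\Delta_{\partial\Omega_\varepsilon}+\operatorname{Sc}_{\partial\Omega_\varepsilon}$ would be strictly positive definite. Since $\tfrac{8}{4-\gamma}<\tfrac{4(n-2)}{n-3}$ for $3<n\le 7$, the conformal Laplacian of $\partial\Omega_\varepsilon$ is then also positive, and a conformal change produces a metric of positive scalar curvature on $\partial\Omega_\varepsilon$. But $\partial\Omega_\varepsilon$ is homologous to $\partial_-M\cong T^{n-1}$ in $M$, so under the projection $M\to T^{n-1}$ some component projects with non-zero degree; such a manifold cannot carry a PSC metric for $n-1\le 6$ by the classical Schoen--Yau theorem, a contradiction. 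When $n=3$, Gauss--Bonnet on the torus gives the contradiction directly as in Lemma \ref{lm existence minimiser spectral ricci}.

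With non-degeneracy in hand, I would pass to a subsequence $\varepsilon_k\to 0$ along which the curvature estimates of \cite[Theorem 3.6]{zhou-existence-2020} and compactness deliver smooth $C^\infty_{\operatorname{loc}}$-convergence $\partial\Omega_{\varepsilon_k}\to\partial\Omega$, where $\Omega$ is the desired stable critical point of the unperturbed functional $E$ with $h=\eta\circ\zeta$. Non-zero degree to $T^{n-1}$ is a homological invariant of the approximants and is therefore inherited by $\partial\Omega$.

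The hard part will be the topological bookkeeping in the PSC contradiction: $\partial\Omega_\varepsilon$ may have several components, only some of which project with non-zero degree to $T^{n-1}$, so the conformal-change / Schoen--Yau argument should really be localized to that distinguished component. One must then verify that the distinguished component itself cannot lie entirely in the outer region, and that it persists as a non-trivial component of $\partial\Omega$ under the subsequential limit. Everything else should be a largely mechanical adaptation of the Ricci proof.
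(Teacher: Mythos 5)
Your perturbation set-up and the non-degeneracy claim ($\partial\Omega_\varepsilon$ must meet $K$) match the paper's strategy, and your argument for the claim is correct — in fact a little more direct than the paper's, which builds a conformal metric out of the first eigenfunction of the stability operator rather than just citing positive definiteness of $-\tfrac{8}{4-\gamma}\Delta + \operatorname{Sc}$. Your worry about several components is not a real issue: if every component of $\partial\Omega_\varepsilon$ lies in the outer region, the whole surface gets a PSC metric, and since the total degree is non-zero some component carries non-zero degree, yielding the contradiction component-wise.

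The genuine gap is in the limit step. The assertion that ``non-zero degree to $T^{n-1}$ is a homological invariant of the approximants and is therefore inherited by $\partial\Omega$'' is not automatic from $C^\infty_{\mathrm{loc}}$-convergence with multiplicity one: as $\varepsilon_k\to 0$ the subdomains $\zeta^{-1}((-T_{\varepsilon_k},T_{\varepsilon_k}))$ exhaust the open band, portions of $\partial\Omega_{\varepsilon_k}$ may drift towards $\partial M$, and the pointed limit can in principle be non-compact or fail to represent the right class. Relatedly, the limit functional with $h=\eta\circ\zeta$ is degenerate near $\partial M$ (the weight $u^\gamma$ vanishes there), so Lemma \ref{lem-warped-bubble} does not apply directly and one cannot just declare the limit ``the desired stable critical point.'' The paper plugs this gap with a specific argument you did not attempt: normalise the first eigenfunction $v_{\varepsilon_k}$ by $\sup_{K\cap\Sigma_{\varepsilon_k}} v_{\varepsilon_k}=1$, use the Harnack inequality to pass to a smooth positive limit $u$, build $\hat g_{\varepsilon_k}=(v_{\varepsilon_k}^{\alpha})^{4/(n-3)}g|_{\Sigma_{\varepsilon_k}}$ with $\alpha<1$ fixed by \eqref{eq def alpha} so that $\operatorname{Sc}(\hat g_{\varepsilon_k})\geq 2Q_{\varepsilon_k}v_{\varepsilon_k}\geq -C\varepsilon_k$ on $K$ and $\geq 0$ outside, invoke \cite[Proposition 3.2]{zhu-rigidity-2020} to conclude $(\Sigma,\hat g)$ is Ricci flat, then read off $u\equiv 1$, $Q_0=0$ and $\nabla\zeta=\nu_0$, so $\Sigma$ is a level set $\zeta^{-1}(t_0)$ with $t_0\in[-\ell_1/2,\ell_1/2]$ — a compact interior torus of degree one. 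Note that the analogous step in Lemma \ref{lm existence minimiser spectral ricci} is also nontrivial (the integral estimate using Gauss--Bonnet), so this is not simply ``a largely mechanical adaptation''; the eigenfunction must be carried through the limit precisely because your strict-positivity argument for $Q_\varepsilon>0$ breaks down when $\varepsilon\to 0$ and $Q_0$ is only $\geq 0$.
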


\begin{proof}
For convenience, we multiply \( \zeta \) by \( \ell_1 \) and still denote the resulting function by \( \zeta \), then
\[ \zeta (\partial_{\pm} M) = \pm   \frac{\pi}{2\sqrt{\frac{- n \gamma  + \gamma  + 2 n}{4(n-1)+2\gamma(2-n)}\Lambda}}=\pm\ell_1, \text{ }
   |\nabla \zeta| \leq 1. \]
We can
choose an odd, smooth function $\alpha (t) : [- \ell_1, \ell_1] \to \mathbb{R}$ such
that $\alpha (t) > 0$ on $(0, \ell_{1}]$, $\alpha' (t) > 0$ on $[0,
\tfrac{\ell_{1}}{2})$, $\alpha' (t) < 0$ on $(\tfrac{\ell_{1}}{2}, \ell_{1}]$.

We define a perturbed version $\eta_{\varepsilon} (t) = \eta (t + \varepsilon \alpha (t))$ of the function \( \eta \) in \eqref{eq eta in spec scalar}
on a
sub-interval $(- T_{\varepsilon}, T_{\varepsilon})$ of $[- \ell_1, \ell_1]$ such
that $\eta_{\varepsilon} (t) \to \pm \infty$ as $t \to \mp T_{\varepsilon}$,
and we easily find that
\begin{equation}
  \frac{- n \gamma  + \gamma  + 2
n}{4(n-1)+2\gamma(2-n)}   \eta_{\varepsilon}^2 + \eta_{\varepsilon}' +
\Lambda = \varepsilon \alpha' (t) \eta' (t + \varepsilon \alpha (t)), \label{eq almost ode}
\end{equation}
and
\begin{align}
 \frac{- n \gamma  + \gamma  + 2
n}{4(n-1)+2\gamma(2-n)}  \eta_{\varepsilon}^2 + \eta_{\varepsilon}' +
  \Lambda & > 0 \text{ if } \tfrac{\ell_1}{2} < |t| < T_{\varepsilon},
  \label{crucial of perturbation 2} \\
 \frac{- n \gamma  + \gamma  + 2
n}{4(n-1)+2\gamma(2-n)}  \eta_{\varepsilon}^2 + \eta_{\varepsilon}' +
  \Lambda & < 0 \text{ if } |t| < \tfrac{\ell_1}{2} . 
\end{align}

Define $h_{\varepsilon} (x) = \eta_{\varepsilon} (\zeta (x))$, where \( x\in M \), and
fix a sufficiently small number \( \varepsilon_0 >0 \). By Sard's lemma, \( \zeta^{-1}( \pm T_{\varepsilon}) \) are both regular surfaces of \( M \) for almost all \( \varepsilon \in (0,\varepsilon_0) \).
We use such \( \varepsilon \). Due to the
condition $\eta_{\varepsilon} (t) \to \pm \infty$ as $t \to \mp
T_{\varepsilon}$, and the existence
result of Lemma \ref{lem-warped-bubble}, we can construct a stable warped \( \mu \)-bubble \( \Omega_{\varepsilon} \subset \zeta^{-1}((-T_{\varepsilon}, T_{\varepsilon})) \).
%
%
Similar to the proof of Theorem \ref{Thm-scalar}, we see
\begin{align}
0 \leq & \tfrac{4}{4-\gamma}
\int_{\partial \Omega_{\varepsilon}} | \nabla_{\partial \Omega_{\varepsilon}} \psi |^2 + \tfrac{1}{2} \int_{\partial
\Omega_{\varepsilon}} \operatorname{Sc}_{\partial \Omega_\varepsilon} \psi^2 \\
& \quad - \int_{\partial \Omega_\varepsilon} \left[  \frac{- n \gamma  + \gamma  + 2
n}{4(n-1)+2\gamma(2-n)}  h_\varepsilon^2 + 
\nabla_{\nu_\varepsilon} h_\varepsilon +  \Lambda \right] \psi^2 .
\label{stability for h epssilon 2}
\end{align}
For \( n=3 \), we can argue similarly as Lemma \ref{lm existence minimiser spectral ricci} and finish the proof. Hence, hereafter, we focus only on the dimensions \( 3 < n \leq 7 \).

We claim that
$\partial \Omega_{\varepsilon}$ cannot lie entirely in the region $\{x \in M :
\text{ } \tfrac{\ell_1}{2} < | \zeta (x) | < T_{\varepsilon} \}$. That is,
$\partial \Omega_{\varepsilon}$ has a non-empty intersection with the compact set
\[K :=
  \{x \in M : | \zeta (x) | \leq \tfrac{\ell_1}{2} \}.
\]
Let $\alpha$ be the positive constant given by
\begin{equation}
  \frac{4 (n - 2)}{n - 3} \alpha =  \frac{8}{4 - \gamma} . \label{eq def
  alpha}
\end{equation}
By the range $0 < \gamma < \tfrac{2 n}{n - 1}$, we have $\alpha < 1$. Let
\[ Q_{\varepsilon} = \frac{- n \gamma + \gamma + 2 n}{4 (n - 1) + 2 \gamma (2
   - n)} h_{\varepsilon}^2 + h_{\nu_{\varepsilon}} + \Lambda, \]
and
\[ L_{\varepsilon} = - \tfrac{4}{4 - \gamma} \Delta_{\partial
   \Omega_{\varepsilon}} + \tfrac{1}{2} \mathrm{Sc}_{\partial
   \Omega_{\varepsilon}} - Q_{\varepsilon} . \]
By \eqref{eq bilinear energy nonnegative}, we see that the first eigenvalue
$\lambda_{1, \varepsilon}$ of the operator $L_{\varepsilon}$ is non-negative.
Assume that $v = v_{\varepsilon}$ is the first eigenfunction, that is,
\begin{equation}
  L_{\varepsilon} v = \lambda_{1, \varepsilon} v. \label{eq eigen}
\end{equation}
We consider the metric $\hat{g}_{\varepsilon} = (v^{\alpha})^{\tfrac{4}{n -
3}} g|_{\partial \Omega_{\varepsilon}}$, by the well known conformal change of
the scalar curvature, it follows that the scalar curvature of the metric
$\hat{g}_{\epsilon}$ is given by
\begin{align}
  (v^{\alpha})^{\frac{n + 1}{n - 3}} \ensuremath{\operatorname{Sc}}_{\partial
  \Omega_{\varepsilon}} (\hat{g}_{\varepsilon}) & = \mathrm{Sc}_{\partial \Omega_{\varepsilon}} v^{\alpha}
  - \tfrac{4 (n - 2)}{n - 3} \Delta_{\partial \Omega_{\varepsilon}} v^{\alpha} \\
  & = v^{\alpha - 1} \left( \mathrm{Sc}_{\partial \Omega_{\varepsilon}} v - \tfrac{4 (n -
  2)}{n - 3} \alpha \Delta_{\partial \Omega_{\varepsilon}} v - \tfrac{4 (n - 2)}{n - 3} \alpha (\alpha - 1)
  v^{- 1} | \nabla^{\partial \Omega_{\varepsilon}} v|^2 \right) . \label{eq full conformal change}
\end{align}
By the definition of $\alpha$ in \eqref{eq def alpha} and that $\alpha < 1$,
we see
\begin{align}
  & v^{\tfrac{4 \alpha}{n - 3} + 1} \ensuremath{\operatorname{Sc}}_{\partial
  \Omega_{\varepsilon}} (\hat{g}_{\varepsilon}) \\
  \geq & \mathrm{Sc}_{\partial \Omega_{\varepsilon}} v - \tfrac{4 (n - 2)}{n - 3}
  \alpha \Delta_{\partial\Omega_{\varepsilon}} v \\
  = & \ensuremath{\operatorname{Sc}}_{\partial \Omega_{\varepsilon}} v - \tfrac{8}{4 -
  \gamma} \Delta_{\partial \Omega_{\varepsilon}} v \\
  = & 2 (\lambda_{1, \varepsilon} v + Q_{\varepsilon} v) \\
  \geq & 2 Q_{\varepsilon} v.  \label{eq lower bound on scalar curvature after conformal change}
\end{align}
If $\partial\Omega_{\varepsilon}$ lies entirely in $\{x \in M : \tfrac{\ell_1}{2} < \zeta
(x) < T_{\varepsilon} \}$, then 
$Q_{\varepsilon} > 0$ by \eqref{crucial of perturbation 2}
and hence $\ensuremath{\operatorname{Sc}}_{\partial
  \Omega_{\varepsilon}} (\hat{g}_{\varepsilon}) > 0$.
By construction of \( \partial \Omega_{ \varepsilon }\), there is a connected component \( \Sigma_{\varepsilon} \)
of \(\partial \Omega_{\varepsilon}  \) such that the projection of \( \Sigma_{\varepsilon} \) to the \( T^{n-1} \)-factor of \( M \) is a map of non-zero degree, and so \( \Sigma_{\varepsilon} \) cannot have a metric of positive scalar curvature which is a contradiction.
Hence, the claim is proved.

Now we consider \( (\Sigma_{\varepsilon},  \hat{g}_{\varepsilon} ) \).
We need to ensure that \( \hat{g}_{\varepsilon} \) has a limit as $\varepsilon \to 0$, and by rescaling 
we can always assume that \( \sup_{K\cap \Sigma_{\varepsilon}} v_{\varepsilon} =1 \).
Then there exists some point \( p_{\varepsilon} \in K\cap \Sigma_{\varepsilon} \) such that \( v_{\varepsilon}(p_{\varepsilon})=1 \) by compactness of \( K \).

By the curvature estimates of \cite[Theorem 3.6]{zhou-existence-2020}, there exists a sequence \( \{ \varepsilon_{k} \}_{k\in \mathbb{N}} \) such that \( \varepsilon_{k} \to 0 \),
\( \Omega_{\varepsilon_k} \) converges to some smooth \( \Omega \) local graphically and with multiplicity one as \( k\to \infty \).
Up to a subsequence, the locally graphical convergence with multiplicity one implies that the pointed \( (\Sigma_{\varepsilon_{k}}, p_{\varepsilon_{k}}) \) converges to \( (\Sigma:=\partial \Omega, p) \) in the pointed smooth topology. 
From the Harnack inequality for \( v_{\varepsilon_{k}} \), we conclude that \( v_{\varepsilon_{k}} \) converges smoothly to some positive \( u\) with \( u(p)=1\).
Assume that the limit metric of \( \hat{g}_{\varepsilon_{k}} \) is \( \hat{g} \), then \( \hat{g} = u^{\frac{4\alpha}{n-3}}g|_{\Sigma} \).

From \eqref{eq almost ode} and \eqref{eq lower bound on scalar curvature after conformal change}, we see that \( \operatorname{Sc}_{\partial \Omega_{\varepsilon_{k}}} (\hat {g}_{\varepsilon_{k}}) \geq  -C \varepsilon_{k} \) on the compact set \( K \) for some positive constant \( C \). 
And outside \( K \), we have already shown that \(  \operatorname{Sc}_{\partial \Omega_{\varepsilon_{k}}} (\hat {g}_{\varepsilon_{k}})   \) is non-negative.
Now we can apply \cite[Proposition 3.2]{zhu-rigidity-2020} to obtain that the limit \( (\Sigma, \hat{g}) \) is Ricci flat.

Considering the Ricci flatness of the limit \( (\Sigma, \hat{g}) \), \eqref{eq full conformal change} and \eqref{eq lower bound on scalar curvature after conformal change},
we see that \( u \) is a constant. Hence \( u=1 \) on \( \Sigma \) by the fact that \( u(p)=1 \).

Denote by \( \nu_0 \) the limit of \( \nu_{\varepsilon_{k}} \).
Considering again the limit of \eqref{eq lower bound on scalar curvature after conformal change}, we see that
\( Q_0 =0 \)
(setting \( \varepsilon =0\)). Therefore, we must have \( \langle \nabla \zeta, \nu_0 \rangle =1 \).
Recall that \(  |\nabla \zeta | \leq 1   \), we must have \(  \nabla \zeta= \nu_0  \), which implies that \( \Sigma \) is a level set of \( \zeta \). And we conclude the proof.
\end{proof}

\subsection{Rigidity}

We can construct a foliation near \( \Sigma \) as well.
The proof is similar to Lemma \ref{Lem-foliation-construct}. Then we use the foliation to extend the rigidity of \( \Sigma \) to every leaf. It remains to calculate the metric.

\begin{lemma}\label{lm scalar foliation}
  Let \( \Sigma \) be constructed as in Lemma \ref{scalar stable existence}, then there exists a foliation \( \{ \Sigma_t\}_{t\in (-\varepsilon,\varepsilon)} \) near \( \Sigma \) such that \( H + \gamma u^{-1}u_{\nu} -h \) is constant along \( \Sigma_t \).
  \end{lemma}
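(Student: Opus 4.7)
The plan is to adapt the inverse function theorem argument of Lemma \ref{Lem-foliation-construct} to the higher-dimensional scalar setting. I would set up the same map
\[
  \tilde{\Phi}:C^{2,\alpha}(\Sigma)\to \hat{C}^{\alpha}(\Sigma)\times \mathbb{R},\qquad \rho\mapsto \left(H_\rho+\gamma u^{-1}u_{\nu_\rho}-h - \tfrac{1}{\operatorname{vol}(\Sigma)}\!\int_\Sigma (H_\rho+\gamma u^{-1}u_{\nu_\rho}-h)\,dv,\; \tfrac{1}{\operatorname{vol}(\Sigma)}\!\int_\Sigma \rho\,dv\right),
\]
where $H_\rho$ and $\nu_\rho$ are associated with the normal graph over $\Sigma$ with height function $\rho$. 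The foliation falls out of the inverse function theorem once $D\tilde{\Phi}|_{\rho=0}$ is shown to be an isomorphism of Banach spaces.

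The critical step is to compute the linearization using the first variation
\[
  \delta_{\phi\nu}(H+\gamma u^{-1}u_\nu-h) = -\Delta_\Sigma \phi - (\operatorname{Ric}(\nu,\nu)+|A|^2)\phi - \gamma u^{-2}u_\nu^2 \phi - \gamma u^{-1}\langle\nabla_\Sigma u, \nabla_\Sigma \phi\rangle + \gamma u^{-1}(\Delta u - \Delta_\Sigma u - H u_\nu)\phi - h_\nu \phi
\]
together with the infinitesimal rigidity of $\Sigma$ that follows from the saturation of the stability inequality in the proof of Lemma \ref{scalar stable existence}. Concretely, $\nu = \nabla\zeta$ and $\Sigma$ is a level set of $\zeta$; $\operatorname{Sc}_\Sigma = 0$; the Schoen-Yau rewrite \eqref{eq:sy rewrite} is an equality, hence $|A|^2 = H^2/(n-1)$; the spectral scalar inequality \eqref{eq spectral scalar} is saturated, giving $\tfrac{1}{2}\operatorname{Sc}_g = \Lambda + \gamma u^{-1}\Delta u$; the Cauchy-Schwarz step \eqref{eq cs 1} with the constant test function $\psi=1$ forces $\nabla_\Sigma u = 0$; the quadratic Cauchy-Schwarz in the proof of Theorem \ref{Thm-scalar} pins $w_\nu = h/(2(n-1)+(2-n)\gamma)$; and $Q_0 := \tfrac{-n\gamma+\gamma+2n}{4(n-1)+2(2-n)\gamma}h^2 + h_\nu + \Lambda = 0$ holds. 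Plugging these into the first variation, the gradient-coupling and $\Delta_\Sigma u$ terms drop, the $\gamma u^{-1}\Delta u$ contributions cancel, and expanding $\tfrac{n}{2(n-1)}H^2 + \gamma w_\nu^2 + \gamma H w_\nu$ via $H = -\gamma w_\nu + h$ together with the pinned value of $w_\nu$ collapses this polynomial in $(h,w_\nu)$ to $\tfrac{-n\gamma+\gamma+2n}{4(n-1)+2(2-n)\gamma}h^2$. The entire zeroth-order coefficient thus reduces to $-Q_0 = 0$, and we obtain $\delta_{\phi\nu}(H+\gamma u^{-1}u_\nu - h) = -\Delta_\Sigma \phi$. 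Consequently $D\tilde{\Phi}|_{\rho=0}(\psi) = (\Delta_\Sigma \psi,\; \tfrac{1}{\operatorname{vol}(\Sigma)}\int_\Sigma \psi\,dv)$ is an isomorphism by standard elliptic theory on the closed torus $\Sigma$.

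The inverse function theorem then produces a smooth family $\rho_t$ with $\rho_0 = 0$, $\partial_t\rho_t|_{t=0}\equiv 1$, and mean $t$, such that each graph $\Sigma_t$ has $H_t + \gamma u^{-1}u_{\nu_t} - h$ constant along $\Sigma_t$. Shrinking $\varepsilon$ to guarantee $\partial_t\rho_t > 0$ makes $\{\Sigma_t\}$ a genuine foliation. The main obstacle is the algebraic bookkeeping in the linearization: unlike the three-dimensional Ricci case where $\operatorname{Ric}(\nu,\nu)+|A|^2$ rewrites cleanly via a Gauss-Codazzi identity, here I must invoke the Schoen-Yau inequality (whose equality forces total umbilicity $|A|^2 = H^2/(n-1)$) together with the two Cauchy-Schwarz inequalities from the band width proof (one forcing $u$ constant along $\Sigma$, the other pinning $w_\nu$ in terms of $h$). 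Verifying that all these equality cases are indeed forced by the rigidity established in Lemma \ref{scalar stable existence}, and that their combination exactly cancels against $Q_0 = 0$, is the essential technical check.
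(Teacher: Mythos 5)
Your proposal is correct and follows essentially the same route as the paper: derive the infinitesimal rigidity (umbilicity, $\operatorname{Sc}_\Sigma=0$, $\nabla_\Sigma u=0$, the pinned $w_\nu$, $Q=0$, $\nabla\zeta=\nu$) from saturation of all the inequalities, plug these into the first-variation formula to collapse the zeroth-order coefficient to $-Q=0$ and hence reduce the linearization to $-\Delta_\Sigma$, and then apply the inverse function theorem exactly as in Lemma~\ref{Lem-foliation-construct}. The only organizational difference is that the paper re-derives the full rigidity list within the proof of this lemma via the conformal-change/torus-obstruction argument (rather than importing it wholesale from Lemma~\ref{scalar stable existence}, which only records part of the list); since you explicitly flag that tracing back the equalities is the essential check, your proposal already accounts for this.
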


\begin{proof} Let
\[ Q = \frac{- n \gamma + \gamma + 2 n}{4 (n - 1) + 2 \gamma (2
   - n)} h^2 + h_{\nu} + \Lambda, \]
and
\[ L = - \tfrac{4}{4 - \gamma} \Delta_{\Sigma} + \tfrac{1}{2}
   \mathrm{Sc}_{\Sigma} - Q  . \]
We have $Q \geq 0$ along $\Sigma$.

Since $\Omega$ is a stable critical point of $E (\Omega)$, \eqref{eq bilinear
energy nonnegative} holds for $\partial \Omega = \Sigma$. It follows then that
the first eigenvalue $\lambda_1$ of $L$ is non-negative. We set $u$ to be the
first eigenfunction, that is $L u = \lambda_1 u$. We consider the metric
$\hat{g} = (u^{\alpha})^{\tfrac{4}{n - 3}} g_{| \Sigma}$ on $\Sigma$, where
$\alpha$ is defined in \eqref{eq def alpha}. By the conformal change formula
for the scalar curvature,
\[ (u^{\alpha})^{\frac{n + 1}{n - 3}} \mathrm{Sc}_{\Sigma} (\hat{g}) =
   u^{\alpha - 1}  \left( \mathrm{Sc}_{\Sigma} u - \tfrac{4 (n - 2)}{n - 3}
   \alpha \Delta u - \tfrac{4 (n - 2)}{n - 3} \alpha (\alpha - 1) u^{- 1} |
   \nabla u|^2 \right) . \]
We see by $0 < \alpha < 1$ that
\[ (u^{\alpha})^{\frac{n + 1}{n - 3}} \mathrm{Sc}_{\Sigma} (\hat{g}) \geq
   u^{\alpha - 1} (L u + Q u) \geq 0. \label{eq g hat non-negativity} \]
However, $\Sigma$ admits a map of non-zero degree to $T^{n - 1}$, hence
$\ensuremath{\operatorname{Sc}}_{\Sigma} (\hat{g}) = 0$. Then $\lambda_1 = 0$,
and $u$ is a constant. Hence
\begin{equation}
  \ensuremath{\operatorname{Sc}}_{\Sigma} (g_{|\Sigma}) = 0. \label{eq inf rig ricci 0}
\end{equation}
Denote by $A^0$ the traceless part of the second fundamental form. Now tracing back the equalities leads to that
\begin{align}
  h_{\nu} & = - | \nabla h|,  \label{eq inf rig ricci 1}\\
  \nabla \zeta & = \nu,  \label{eq inf rig ricci 2} \\
  A^0 & = 0,  \label{eq inf rig ricci 3} \\
Q & =\frac{- n \gamma + \gamma + 2 n}{4 (n - 1) + 2 \gamma (2 - n)} h^2 + h_{\nu}
+ \Lambda =0,  \label{eq inf rig ricci 4} \\
  w_{\nu} = (\log u)_{\nu} & = \tfrac{1}{2 (n - 1) \left( \tfrac{n}{2 (n - 1)}
                         \gamma - \gamma + 1 \right)} h,  \label{eq inf rig ricci 5} \\
  u & \text{ is a constant along } \Sigma. \label{eq inf rig ricci 6} 
\end{align}
The above leads the linearization of $H+\gamma u^{-1} u_{\nu}-h$ to just $-\Delta_{\Sigma}$, and this is sufficient to show the existence of a foliation $\{\Sigma_t \}_{t
  \in (- \varepsilon, \varepsilon)}$ near $\Sigma$ such that each leaf is of constant \( H + \gamma u^{-1} u_{\nu} -h \).
The rest of the proof is similar to the proof of Lemma
\ref{Lem-foliation-construct} and we omit it.
\end{proof}

For convenience, we give the following definition.
\begin{definition} \label{def inf rig ricci}
  We call a hypersurface $\Sigma$ in $M$ infinitesimally rigid if it satisfies
  \[ H = -\gamma u^{-1} u_{\nu} +h ,\]
  \eqref{eq inf rig ricci 0}, \eqref{eq inf rig ricci 1}, \eqref{eq inf rig
  ricci 2}, \eqref{eq inf rig ricci 3}, \eqref{eq inf rig ricci 4}, \eqref{eq
  inf rig ricci 5} and \eqref{eq inf rig ricci 6}.
\end{definition}

  We can determine the sign of \( H + \gamma u^{-1} u_{\nu} -h \) for every \( \Sigma_t \). 
\begin{lemma}\label{lm scalar foliation sign}
  Let  \( \{ \Sigma_t\}_{t\in (-\varepsilon,\varepsilon)} \) be constructed as in Lemma \ref{lm scalar foliation},
  then
  \begin{equation}\label{eq H tilde t def}
    \tilde{H}_t := H+\gamma u^{-1} u_{\nu} -h \leq  0 
  \end{equation}
  for \( t\in [0,\varepsilon) \) and \( \tilde{H}_t \geq 0 \) for \( t \in (-\varepsilon,0) \).
  \end{lemma}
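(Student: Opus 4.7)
The plan is to mimic the Ricci-case argument of Proposition \ref{Prop-monotonic-formula}, but to replace the Gauss--Bonnet shortcut (which is available only when $n=3$) by a conformal change / first eigenfunction trick suited to the $(n-1)$-dimensional torus. The goal is to derive a differential inequality of the form $\tilde H_t' + \Psi(t)\tilde H_t \le 0$ for some continuous function $\Psi(t)$. Combined with $\tilde H_0=0$, which follows from the relation $H=-\gamma u^{-1}u_\nu+h$ in the infinitesimal rigidity Definition \ref{def inf rig ricci}, standard ODE comparison then forces $\tilde H_t \le 0$ for $t\in[0,\varepsilon)$ and $\tilde H_t \ge 0$ for $t\in(-\varepsilon,0]$.

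First I differentiate $\tilde H_t$ along the foliation, with speed $\phi_t=\langle\partial_t,\nu_t\rangle>0$ (which is positive for $\varepsilon$ small since $\phi_0$ is a positive constant by \eqref{eq inf rig ricci 6} and the construction in Lemma \ref{lm scalar foliation}), using the first variation of mean curvature together with the variation of the weighted term, obtaining an expression in $\Delta_{\Sigma_t}\phi_t$, $\operatorname{Ric}(\nu_t,\nu_t)+|A_t|^2$, $\gamma u^{-1}\Delta u$, and $h_{\nu_t}$. Then I apply the Schoen--Yau rewrite \eqref{eq:sy rewrite}, the spectral scalar curvature bound \eqref{eq spectral scalar}, and substitute $H_t=\tilde H_t-\gamma w_\nu+h$. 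Grouping the $w_\nu^2$, $h w_\nu$, $h^2$ terms and completing the square by the same Cauchy--Schwarz estimate that produced \eqref{eq bilinear energy nonnegative} yields a pointwise inequality on $\Sigma_t$ of the shape
\begin{align}
-\tilde H_t'(t)\phi_t^{-1} \ge{}& \tilde H_t\cdot F_t + \operatorname{div}_{\Sigma_t}\!\Big(\tfrac{4}{4-\gamma}\tfrac{\nabla_{\Sigma_t}\phi_t}{\phi_t} + \gamma \tfrac{\nabla_{\Sigma_t}u}{u}\Big) - \tfrac{1}{2}\operatorname{Sc}_{\Sigma_t} + Q_t,
\end{align}
where $F_t$ is a bounded function built from $w_\nu$ and $h$, and $Q_t := \tfrac{-n\gamma+\gamma+2n}{4(n-1)+2\gamma(2-n)}h^2 + h_{\nu_t} + \Lambda \ge 0$.

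To absorb $\operatorname{Sc}_{\Sigma_t}$ when $n>3$, I let $v_t$ be the first positive eigenfunction of $L_t = -\tfrac{4}{4-\gamma}\Delta_{\Sigma_t} + \tfrac{1}{2}\operatorname{Sc}_{\Sigma_t} - Q_t$, and form the conformal metric $\hat g_t = v_t^{4\alpha/(n-3)} g|_{\Sigma_t}$ with $\alpha$ given by \eqref{eq def alpha}, exactly as in the proof of Lemma \ref{scalar stable existence}. The minimizing property of the region enclosed by $\Sigma_t$ (inherited from $\Omega_0$ by an argument parallel to Proposition \ref{Prop-minimiser}) forces $\lambda_{1,t}\ge 0$, and the conformal change computation gives $\operatorname{Sc}_{\Sigma_t}(\hat g_t)\ge 0$. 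Since $\Sigma_t$ admits a map of non-zero degree to $T^{n-1}$, torus rigidity yields $\operatorname{Sc}_{\Sigma_t}(\hat g_t)=0$, $\lambda_{1,t}=0$, and $v_t$ is constant along $\Sigma_t$. Multiplying the pointwise inequality by $v_t^{2\alpha}$, integrating over $\Sigma_t$, using the divergence theorem to kill the divergence term, and using the conformal formula together with $\lambda_{1,t}=0$ to cancel $\tfrac{1}{2}\operatorname{Sc}_{\Sigma_t}$, while dropping the non-negative $Q_t$ and completed squares, produces
\begin{equation}
-\tilde H_t'(t)\int_{\Sigma_t}\tfrac{v_t^{2\alpha}}{\phi_t}\ \ge\ \tilde H_t(t)\int_{\Sigma_t} F_t\, v_t^{2\alpha}.
\end{equation}
Setting $\Psi(t) = \bigl(\int_{\Sigma_t} v_t^{2\alpha}/\phi_t\bigr)^{-1}\int_{\Sigma_t} F_t\, v_t^{2\alpha}$, we obtain $\tilde H_t' + \Psi(t)\tilde H_t \le 0$, with $\Psi$ continuous by smooth dependence of the foliation and of $v_t$ (normalized, e.g., by $\max_{\Sigma_t}v_t=1$) on $t$.

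The main technical obstacle is the clean extraction of the monotonicity inequality in the presence of $\operatorname{Sc}_{\Sigma_t}$: the torus-rigidity argument on each leaf must be integrated into the pointwise differential inequality in such a way that the resulting scalar ODE is well-defined and $\Psi(t)$ is continuous at $t=0$. Once the foliation has been fixed and the first eigenfunctions normalized, the smooth dependence on $t$ yields the continuity of $\Psi$, and the remainder of the argument is a routine parallel of Propositions \ref{Prop-monotonic-formula} and \ref{Prop-minimiser}.
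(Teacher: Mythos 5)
Your proposal contains a circularity that undermines the main step. You write that ``the minimizing property of the region enclosed by $\Sigma_t$ (inherited from $\Omega_0$ by an argument parallel to Proposition \ref{Prop-minimiser}) forces $\lambda_{1,t}\geq 0$,'' and then you lean on $\lambda_{1,t}\geq 0$ to conclude that $\operatorname{Sc}_{\Sigma_t}(\hat g_t)\geq 0$, that $v_t$ is constant, and ultimately to cancel the $\operatorname{Sc}_{\Sigma_t}$ term from the differential inequality. But in the paper the minimizing property of $\Omega_t$ for $t\neq 0$ is established \emph{after} and \emph{using} the present lemma: Proposition \ref{Prop-minimiser} (in the Ricci case) and the analogous step in the proof of Theorem \ref{thm rigid scalar} deduce $E(\Omega_t)\leq E(\Omega_0)$ precisely from $\tilde H_t\leq 0$ for $t\geq 0$. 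A priori, $\Sigma_t$ for $t\neq 0$ is only a leaf of the constant-$\tilde H$ foliation produced by the inverse function theorem; there is no reason it is stable or minimizing, so you cannot extract $\lambda_{1,t}\geq 0$ (nor the constancy of the first eigenfunction $v_t$) at this stage without assuming what you are trying to prove.

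The paper sidesteps this entirely. After arriving at the pointwise inequality $\tilde H'+q(t)\tilde H_t\leq \phi_t L(t)$, it makes no stability assumption on $\Sigma_t$. Instead, for each $t$ it integrates against an arbitrary positive weight $\varphi$, using that $\tilde H_t$ is constant on $\Sigma_t$, and shows by \emph{contradiction} that some choice of $\varphi>0$ makes $\int_{\Sigma_t}\varphi\phi_t L(t)\leq 0$: if not, then $\int_{\Sigma_t}L(t)\psi^2>0$ for every positive $\psi$ (taking $\psi^2=\varphi\phi_t$), and integrating by parts reproduces the strict stability inequality from the proof of Theorem \ref{Thm-scalar}, which via the conformal change argument would give a positive scalar curvature metric on $\Sigma_t$ --- impossible since $\Sigma_t$ maps to $T^{n-1}$ with non-zero degree. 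That contradiction argument uses only the topology of the leaf, not its stability, and is the key ingredient your proposal is missing. If you wish to pursue your eigenfunction/conformal-metric route, you would need to reorganize the argument as a genuine open-closed/continuation scheme in $t$ (propagating stability forward from $t=0$ alongside the sign of $\tilde H_t$), which is a substantially different and more delicate proof than what you sketch.
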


  \begin{proof}
    It suffices to prove for $t\in (0,\varepsilon)$, and it is similar for \( t\in (-\varepsilon,0) \). We do this by establishing an ordinary differential inequality for \( \tilde H \).

Let $\nu_t$ be the unit normal of $\Sigma_t$ pointing to the direction of the
foliation and $\phi_t = \langle \partial_t, \nu_t \rangle$ is then a positive
function on $\Sigma_t$. We set
\begin{equation}
  \phi_t = u^{- \gamma / 2} e^{\xi_t}, w = \log u \label{eq xi def}
\end{equation}
and
\begin{equation}
  P_t = - |A_{\Sigma_t} |^2 -\ensuremath{\operatorname{Ric}} (\nu_t, \nu_t) -
  \gamma w_{\nu}^2 + \gamma u^{- 1} \Delta u - \gamma H w_{\nu} - h_{\nu} .
  \label{eq P def}
\end{equation}
The first variation formula for $\tilde{H}_t$ (see Lemma \ref{weighted
stability}) gives
\[ \phi_t^{- 1} \tilde{H}' = - \phi_t^{- 1} \Delta_{\Sigma_t} \phi_t - \gamma
   u^{- 1} \Delta_{\Sigma_t} u - \gamma \phi^{- 1}_t \langle \nabla_{\Sigma_t}
   w, \nabla_{\Sigma_t} \phi_t \rangle + P_{t}. \]
Using \eqref{eq xi def} in the above and after a tedious calculation, we see
\begin{equation}
  \phi_t^{- 1} \tilde{H}' = - | \nabla_{\Sigma_t} \xi_t |^2 -
  \Delta_{\Sigma_t} \xi_t + (\tfrac{\gamma^2}{4} - \gamma) | \nabla_{\Sigma_t}
  w|^2 - \tfrac{\gamma}{2} \Delta_{\Sigma_t} w + P_t . \label{eq use xi in
  tilde H}
\end{equation}
The estimate below 
\[ 2 (|A_{\Sigma_t} |^2 +\ensuremath{\operatorname{Ric}}(\nu_t, \nu_t))
   \geq \ensuremath{\operatorname{Sc}}_g -\ensuremath{\operatorname{Sc}}_{\Sigma_t} + \tfrac{n}{n - 1}
   (\tilde{H}_t - \gamma w_{\nu_t} + h)^2  \]
follows from \eqref{eq:sy rewrite} and
\eqref{eq H tilde t def}.
Using \eqref{eq P def}, the above, \eqref{eq spectral scalar} in \eqref{eq H
tilde t def}, and with suitable regrouping of terms, we obtain
\begin{align}
  \phi_t^{- 1} \tilde{H}' \leq & - | \nabla_{\Sigma_t} \xi_t |^2 -
  \Delta_{\Sigma_t} \xi_t + (\tfrac{\gamma^2}{4} - \gamma) | \nabla_{\Sigma_t}
  w|^2 - \tfrac{\gamma}{2} \Delta_{\Sigma_t} w \\
  & \quad - \Lambda + \tfrac{1}{2} \ensuremath{\operatorname{Sc}}_{\Sigma_t}
  - \tfrac{n}{2 (n - 1)} (- \gamma w_{\nu} + h)^2 - [\gamma (- \gamma w_{\nu}
  + h)w_{\nu_t} + h_{\nu_t} + \gamma w_{\nu_t}^2] \\
  & \quad - \tfrac{n}{2 (n - 1)} \tilde{H}_t^2 - \tilde{H}_t (\tfrac{n}{n -
  1} (- \gamma w_{\nu_t} + h) + \gamma w_{\nu_t}) \\
  =: & L (t) - \tfrac{n}{2 (n - 1)} \tilde{H}_t^2 - \tilde{H}_t (\tfrac{n}{n -
  1} (- \gamma w_{\nu_t} + h) + \gamma w_{\nu_t}) . \label{eq L t def}
\end{align}
We set $q (t) = \phi_t (\tfrac{n}{n - 1} (- \gamma w_{\nu_t} + h) + \gamma
w_{\nu_t})$. Then
\[\tilde{H}'+q(t)\tilde{H}_{t}\leq \phi_{t}L(t).\]
For any positive smooth function $\varphi$ on $\Sigma_{t}$, we have
\[\tilde{H}'+\tfrac{\int_{\Sigma_{t}}q(t)\varphi}{\int_{\Sigma_{t}}\varphi}\tilde{H}_{t}\leq \tfrac{\int_{\Sigma_{t}}\varphi\phi_{t}L(t)}{\int_{\Sigma_{t}}\varphi},\]
since $\tilde{H}_{t}$ depends only on $t$. Let $\Phi(t)=\frac{\int_{\Sigma_{t}}q(t)\varphi}{\int_{\Sigma_{t}}\varphi}$. Then
\[\left( \tilde{H}_t
   e^{\int^t_0 \Phi(s) \mathrm{d} s} \right)'=(\tilde{H}'+\Phi(t)\tilde{H}_{t})e^{\int^t_0 \Phi(s) \mathrm{d} s}\leq e^{\int^t_0 \Phi(s) \mathrm{d} s}\tfrac{\int_{\Sigma_{t}}\varphi\phi_{t}L(t)}{\int_{\Sigma_{t}}\varphi}.\]

   The proof is concluded if we can show that there exists a positive function $\varphi$ such that $\int_{\Sigma_{t}}\varphi\phi_{t}L(t) \leq 0$ for each $t \in (0,
\varepsilon)$. Indeed, $\tilde{H}_t e^{\int^t_0 \Phi (s) \mathrm{d} s}$ is a
non-increasing function on $[0, \varepsilon)$, so
\[ \tilde{H}_t e^{\int^t_0 \Phi (s) \mathrm{d} s} \leq \tilde{H}_0 = 0
   \text{ for } t \in [0, \varepsilon) . \]
Now  show that there exists a positive function $\varphi$ such that $\int_{\Sigma_{t}}\varphi\phi_{t}L(t) \leq 0$ for each $t \in [0, \varepsilon)$. We
assume otherwise: $\int_{\Sigma_{t}}\varphi\phi_{t}L(t) > 0$ for some $t \in [0, \varepsilon)$ and any $\varphi>0$. Let $\psi^2=\varphi\phi_{t}$, then
\begin{equation}
  \int_{\Sigma_t} L (t) \psi^2 > 0 \label{eq Lt inegration}
\end{equation}
for any non-zero positive smooth function $\psi \in C^{\infty} (\Sigma_t)$.

We analyze only terms involving $\tfrac{\gamma}{2} \Delta_{\Sigma_t} w$ and
$| \nabla_{\Sigma_t} \xi |^2 + \Delta_{\Sigma_t} \xi_t$.
First, we see that
\begin{align}
  & (| \nabla_{\Sigma_t} \xi |^2 + \Delta_{\Sigma_t} \xi_t) \psi^2
  \\
  = & | \nabla_{\Sigma_t} \xi |^2 \psi^2 - 2 \langle \nabla_{\Sigma_t} \psi,
  \psi \nabla_{\Sigma_t} \xi \rangle
  +\ensuremath{\operatorname{div}}_{\Sigma_t} (\psi^2 \nabla_{\Sigma_t} \xi_t)
  \\
  \geq & - | \nabla_{\Sigma_t} \psi |^2
  +\ensuremath{\operatorname{div}}_{\Sigma_t} (\psi^2 \nabla_{\Sigma_t} \xi_t)
  . 
\end{align}
So it follows from integration by parts that
\begin{equation}
  - \int_{\Sigma_t} (| \nabla_{\Sigma_t} \xi |^2 + \Delta_{\Sigma_t} \xi_t)
  \psi^2 \leq \int_{\Sigma_t} | \nabla_{\Sigma_t} \psi |^2 . \label{eq
  ibp 1}
\end{equation}
And also
\begin{equation}
  \tfrac{\gamma}{2} \int_{\Sigma_t} \psi^2 \Delta_{\Sigma_t} w =-
  \int_{\Sigma_t} \gamma \psi \langle \nabla_{\Sigma_t} w , \nabla_{\Sigma_t}
  \psi \rangle . \label{eq ibp 2}
\end{equation}
Putting the above two formulas into \eqref{eq Lt inegration}, we see
\begin{align}
  0 < & \int_{\Sigma_t} | \nabla_{\Sigma_t} \psi |^2 + \int_{\Sigma_t} [\gamma
  \psi \langle \nabla_{\Sigma_t} w, \nabla_{\Sigma_t} \psi \rangle +
  (\tfrac{\gamma^2}{4} - \gamma) \psi^2 | \nabla_{\Sigma_t} w|^2] \\
  & \quad + \int_{\Sigma_t} [\tfrac{1}{2} \mathrm{Sc}_{\Sigma_t} -
  \tfrac{n}{2 (n - 1)} (- \gamma w_{\nu} + h)^2 - \Lambda] \psi^2 \\
  & \quad - \int_{\Sigma_t} [\gamma (- \gamma w_{\nu} + h) w_{\nu} + h_{\nu}
  + \gamma w_{\nu}^2] \psi^2 . 
\end{align}
This is the same form as the inequality that appeared in the Theorem
\ref{Thm-scalar} (see the lines near \eqref{eq similar back ref}). Note
also that the inequality is strict.

Now we argue similarly as in Theorem \ref{Thm-scalar}.
The inequality would imply that $\Sigma_t$ admits a metric of positive scalar
curvature. However, this is impossible because $\Sigma_t$ admits a map of
non-zero degree to $T^{n - 1}$. Here, the proof is complete.
\end{proof}


Now we give the proof of Theorem \ref{thm rigid scalar}.

\begin{proof}[Proof of Theorem \ref{thm rigid scalar}]
  
  First, we define \( \Omega_t \) by setting
 \begin{eqnarray}
\Omega_{t}=\begin{cases}
    \Omega \cup \tilde{\Omega}_{t},\mbox{ if } 0<t< \epsilon,\\
    \Omega \setminus \tilde{\Omega}_{t},\mbox{ if } -\epsilon <t<0.
\end{cases}    
\end{eqnarray} 
from the foliation \( \{ \Sigma_t\}_{t\in (\epsilon,\epsilon)} \) constructed in Lemma \ref{lm scalar foliation}.
Here $\tilde{\Omega}_{t}$ is the region bounded by $\Sigma_{t}$ and $\Sigma$. 
We show that every $\Omega_t$ is a stable critical point of the functional $E $. It is
enough to show for $t > 0$, and it is similar for $t < 0$.

Let \( \partial_t \) be the variational vector field of the foliation and 
$\phi_t = \langle \nu_t, \partial_t \rangle$, by the first
variation of $E $ (Lemma \ref{lem-first-variation}),
\[ \tfrac{\mathrm{d}}{\mathrm{d} t} E (\Omega_t) = \int_{\Sigma_t} \tilde{H}_t
   u^{\gamma} \phi_t \leq 0. \]
where we used $\tilde{H}_t \leq 0$ from Lemma \ref{lm scalar foliation
sign}. Therefore, $E (\Omega_t) \leq E (\Omega_0)$ and every $\Omega_t$
is a minimiser to the functional $E$.

Hence, every $\Sigma_t$ is infinitesimally rigid. We use the definition of
infinitesimal rigidity to finish the proof. By umbilicity, we find that the
metric $g$ is a warped product $\mathrm{d} t^2 + \xi (t)^2 g_{T^{n
    - 1}}$ for some flat metric on \( T^{n-1} \),
so the mean curvature of the $t$-level set is given by
\[ H = (n - 1) \tfrac{\xi'}{\xi} . \]
By the fact that $\nabla \phi = \nu$, we see that $\phi$ differs from $t$ by some
constant. We can solve the ordinary differential equation \eqref{eq inf rig
ricci 4} (note that $h_{\nu} = h'$) and obtain
\[ h (t) = - \tfrac{\sqrt{\Lambda}}{\sqrt{\frac{- n \gamma + \gamma + 2 n}{4 (n
   - 1) + 2 \gamma (2 - n)} }} \tan \left( \sqrt{\tfrac{\Lambda (- n \gamma +
   \gamma + 2 n)}{4 (n - 1) + 2 \gamma (2 - n)} } (t + c) \right) \]
in terms of $t$ for some constant $c$. Using \eqref{eq inf rig ricci 5}, we
find an expression for $(\log u)_{\nu}$ and finally an expression for the mean
curvature
\[ H = - \gamma u^{- 1} u_{\nu} + h = \tfrac{(2 - \gamma) (n - 1)}{2 (n - 1) +
   \gamma (2 - n)} h = (n - 1) \xi' / \xi, \]
so
\[ \xi' \xi^{- 1} = \tfrac{2 - \gamma}{\gamma (2 - n) + 2 (n - 1)} h. \]
We easily find that
\[ \xi (t) = \left(\cos (\sqrt{\tfrac{\Lambda (- n \gamma + \gamma + 2 n)}{4 (n -
   1) + 2 \gamma (2 - n)} } (t + c))\right)^{ 2 (2 - \gamma) (-n \gamma + \gamma + 2 n)^{- 1}}. \]
By connectedness, we can extend the rigidity to all of $M$, and by considering the boundary values of $\zeta$ (i.e. $t$), we see that $c = 0$.
Hence the theorem is proved.
\end{proof}




\appendix

\section{Determine the metric in the spectral Ricci case}\label{subsec-determine-the-metric}
 
Here, we give the details of how to determine the metric in Theorem \ref{ricci rigid reformulation}. As shown earlier in Section \ref{sec spectral ricci}, the metric \( g \)
is a doubly warped product
\begin{equation}
  \mathrm{d} t^2 + \phi (t)^2 \mathrm{d} s_1^2 + \varphi (t)^2 \mathrm{d}
  s_2^2 \label{eq doubly warped} .
\end{equation}
Here, to avoid using subscripts, we have set \( \phi = \phi_1 \) and \( \varphi = \phi_2 \).
The function \( h = h(t) \) satisfies the ODE
\begin{equation}
  (1 - \gamma / 4) h^2 + h' +  \Lambda = 0. \label{eq h ode in spectral ricci}
\end{equation}
The function \( u \) is constant on each \( t \)-level set, 
\begin{equation}
 w'= (\log u)' = \frac{1}{2}h, \label{eq w and h}
\end{equation}
and the mean curvature \( H(t) \)
of the level set is given by
\begin{equation}
H = -\gamma u^{-1} u' + h = (-\frac{1}{2} \gamma + 1)h. \label{eq H h}
\end{equation}
Let \( \{e_1, e_2\} \) be an arbitrary orthonormal frame of the \( t \)-level set, then
\begin{equation} \operatorname{Ric}(e_1,e_1) = \operatorname{Ric}(e_2,e_2) \leq \operatorname{Ric}(\partial_t, \partial_t). \label{eq ricci relation}    \end{equation}

We put our calculation in the form of a lemma. 

\begin{lemma} \label{lm calc spectral ricci}
  Let \( g \) be a metric of the form \eqref{eq doubly warped} which satisfies
  \eqref{eq h ode in spectral ricci}, \eqref{eq w and h}, \eqref{eq H h} and \eqref{eq ricci relation}, then
\begin{align}
    \phi(t)&=\phi(0)\left(\cos(\sqrt{\Lambda(1-\tfrac{\gamma}{4})}t)\right)^{\tfrac{2-\gamma}{4-\gamma}} \exp\left( { \tfrac {\phi'(0)}{\phi(0)}   \int_{0}^{t} \left(\cos(\sqrt{\Lambda(1-\tfrac{\gamma}{4})}s)\right)^{-\tfrac{1-\gamma/2}{1-\gamma/4}}ds} \right) \label{eq phi}\\
\varphi(t)&=\varphi(0)\left(\cos(\sqrt{\Lambda(1-\tfrac{\gamma}{4})}t)\right)^{\tfrac{2-\gamma}{4-\gamma}}\exp\left({  \tfrac {\varphi'(0)}{\varphi(0)}   \int_{0}^{t} \left(\cos(\sqrt{\Lambda(1-\tfrac{\gamma}{4})}s)\right)^{-\tfrac{1-\gamma/2}{1-\gamma/4}}ds}\right) \label{eq varphi},
\end{align}
where \( \tfrac{\phi'(0) } {\phi(0)} =- \tfrac{\varphi'(0) } {\varphi(0)} \) and
\begin{equation}
  \tfrac{1}{2} (1 - \tfrac{\gamma}{2}) \Lambda \geq 2
  \left(\tfrac{\phi'(0)}{\phi (0)} \right)^2 . \label{eq beta restriction}
  \end{equation}

\end{lemma}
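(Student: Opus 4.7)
The plan is to exploit the ODE structure imposed by the doubly warped product metric \eqref{eq doubly warped}. First, I compute the Ricci tensor of $g = dt^2 + \phi^2 ds_1^2 + \varphi^2 ds_2^2$, obtaining the standard formulas
\[
\operatorname{Ric}(\partial_t,\partial_t) = -\tfrac{\phi''}{\phi} - \tfrac{\varphi''}{\varphi}, \quad \operatorname{Ric}(e_1,e_1) = -\tfrac{\phi''}{\phi} - \tfrac{\phi'\varphi'}{\phi\varphi}, \quad \operatorname{Ric}(e_2,e_2) = -\tfrac{\varphi''}{\varphi} - \tfrac{\phi'\varphi'}{\phi\varphi}.
\]
Using the rigidity equality $\operatorname{Ric}(e_1,e_1) = \operatorname{Ric}(e_2,e_2)$ (Definition \ref{def inf rig spec ric}), I deduce $\phi''/\phi = \varphi''/\varphi$. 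Combined with the mean curvature equation \eqref{eq H h}, which reads $\phi'/\phi + \varphi'/\varphi = (1-\gamma/2)h$, this gives two scalar constraints, enough to determine $\phi$ and $\varphi$ up to integration constants.

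Setting $a = \phi'/\phi$ and $b = \varphi'/\varphi$, the two constraints become $a+b = (1-\gamma/2)h$ and $a' + a^2 = b' + b^2$. Subtracting yields the separable ODE
\[
(a-b)' + (1-\gamma/2)h(a-b) = 0,
\]
integrable via the integrating factor $\exp((1-\gamma/2)\int_0^t h\, ds)$. The ODE \eqref{eq h ode in spectral ricci} with initial condition $h(0) = 0$ (forced since $\Sigma_0$ arises as a zero-mean-curvature leaf in the limit of the construction of Lemma \ref{lm existence minimiser spectral ricci}) has the closed-form solution $h(t) = -\sqrt{\Lambda/(1-\gamma/4)}\,\tan(\sqrt{\Lambda(1-\gamma/4)}\,t)$, from which $\int_0^t h\, ds = (1-\gamma/4)^{-1}\log\cos(\sqrt{\Lambda(1-\gamma/4)}\,t)$. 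Reassembling $a = \tfrac12[(a+b)+(a-b)]$ and $b = \tfrac12[(a+b)-(a-b)]$ and integrating $a = (\log\phi)'$, $b = (\log\varphi)'$ yields the explicit formulas \eqref{eq phi} and \eqref{eq varphi}. The relation $\phi'(0)/\phi(0) = -\varphi'(0)/\varphi(0)$ is immediate from $a(0)+b(0) = (1-\gamma/2)h(0) = 0$.

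For the bound \eqref{eq beta restriction}, note that \eqref{eq ricci relation} is equivalent to $K \leq ab$, where $K := \phi''/\phi$. I compute $K(0)$ as follows: the spectral identity $2\operatorname{Ric} = \Lambda + \gamma u^{-1}\Delta u$ (holding as equality by infinitesimal rigidity), together with $u$ depending only on $t$ through $w' = h/2$, gives by direct substitution $\operatorname{Ric}(e_1,e_1)(0) = (1-\gamma/2)\Lambda/2$. Comparing with the warped product expression $\operatorname{Ric}(e_1,e_1)(0) = -K(0) - a(0)b(0) = -K(0)+a(0)^2$ yields $K(0) = a(0)^2 - (1-\gamma/2)\Lambda/2$. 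The condition $K(0) \leq a(0)b(0) = -a(0)^2$ then rearranges exactly to \eqref{eq beta restriction}.

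The main technical hurdle is tracking the various exponents and verifying that the $t = 0$ bound in fact suffices for the Ricci inequality at every $t \in (-\ell_0, \ell_0)$. For the latter, $K \leq ab$ at general $t$ reduces, after substituting the closed-form expressions for $h$ and $a-b$ and using $\Lambda + (1-\gamma/4)h^2 = \Lambda\sec^2(\sqrt{\Lambda(1-\gamma/4)}\,t)$, to a bound of the shape $a(0)^2 \leq \tfrac14 (1-\gamma/2)\Lambda\,\cos^{2\alpha-2}(\sqrt{\Lambda(1-\gamma/4)}\,t)$, with $\alpha = (1-\gamma/2)/(1-\gamma/4)$. Since $\alpha \leq 1$ in the admissible range $0 < \gamma < 4$, the factor $\cos^{2\alpha-2} \geq 1$ and the $t=0$ bound is the binding one, closing the argument.
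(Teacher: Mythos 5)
Your proof follows essentially the same route as the paper's: derive $\phi''/\phi = \varphi''/\varphi$ from the Ricci equality, combine with the mean-curvature identity to get a first-order linear ODE for $\phi'/\phi - \varphi'/\varphi$, integrate using the explicit solution of the $h$-ODE, and then plug the resulting expressions into the Ricci inequality $\phi''/\phi \le \phi'\varphi'/(\phi\varphi)$ to extract the constraint on $\phi'(0)/\phi(0)$. One genuine improvement: you explicitly verify that the Ricci inequality is binding at $t=0$, i.e.\ that $\cos^{2\alpha-2}\theta \ge 1$ since $\alpha = (1-\gamma/2)/(1-\gamma/4) \le 1$ on the full range $0<\gamma<4$; the paper states the $t$-dependent inequality and then just records the $t=0$ case without spelling out why it dominates. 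Two minor imprecisions are worth flagging. First, your justification that $h(0)=0$ is ``forced since $\Sigma_0$ arises as a zero-mean-curvature leaf'' is not right: the construction in Lemma \ref{lm existence minimiser spectral ricci} only places the limiting surface at some level $t_0 \in [-\ell_0/2, \ell_0/2]$, and its mean curvature $(1-\gamma/2)h(t_0)$ need not vanish. The condition $h(0)=0$ is simply a normalization of the $t$-coordinate, implicit in the way the lemma states its conclusion. Second, you invoke the spectral equality $2\operatorname{Ric} = \Lambda + \gamma u^{-1}\Delta u$ ``by infinitesimal rigidity,'' but that is not among the lemma's listed hypotheses; however, it does follow from \eqref{eq h ode in spectral ricci}, \eqref{eq w and h}, \eqref{eq H h} and the warped-product formulas, so your conclusion $\operatorname{Ric}(e_1,e_1)(0) = (1-\gamma/2)\Lambda/2$ is correct even if the cited reason is slightly off.
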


We find it convenient to record 
the non-zero components of the Ricci curvatures of the metric \( g \) given in \eqref{eq doubly warped} here:
\begin{align}
  \ensuremath{\operatorname{Ric}} (\partial_t, \partial_t) & = - (\phi
  \varphi)^{- 1} (\phi \varphi'' + \varphi'' \phi),  \label{eq ric t}\\
  \ensuremath{\operatorname{Ric}} ( \phi(t)^{-1} \frac{\partial}{\partial s_1} ,  \phi(t)^{-1} \frac{\partial}{\partial s_1} ) & = - (\phi \varphi)^{- 1} 
  (\varphi \phi'' + \varphi' \phi'),  \label{eq ric e1} \\
  \ensuremath{\operatorname{Ric}} ( \varphi(t)^{-1} \frac{\partial}{\partial s_2},  \varphi(t)^{-1} \frac{\partial}{\partial s_2} ) & = - (\phi \varphi)^{- 1}
  (\varphi'' \phi + \varphi' \phi') .  \label{eq ric e2}
\end{align}
Note that 
\[ \{  \phi(t)^{-1} \frac{\partial}{\partial s_1}, 
  \varphi(t)^{-1} \frac{\partial}{\partial s_2} ,  \partial_t  \} \] is an orthonormal frame with respect to the metric \( g \).

\begin{proof}[Proof of Lemma \ref{lm calc spectral ricci}]
The solution $h = h (t)$ to the ODE \eqref{eq h ode in spectral ricci} is given by
\[ h (t) = - \sqrt{\tfrac{ \Lambda}{1 - \tfrac{1}{4} \gamma}} \tan
   (\sqrt{ \Lambda (1 - \tfrac{\gamma}{4})} t) . \]
 It follows from \eqref{eq w and h} that
\[ u = \cos \left( \sqrt{ \Lambda (1 - \tfrac{\gamma}{4})} t
   \right)^{\tfrac{1}{2 (1 - \gamma / 4)}} . \]
 Choosing \( e_1 =   \phi(t)^{-1} \frac{\partial}{\partial s_1}\) and 
 \(e_2 =  \varphi(t)^{-1} \frac{\partial}{\partial s_2} \) in \eqref{eq ricci relation}, and using \eqref{eq ric e1} and \eqref{eq ric e2} leads to 
\begin{equation}
  \phi'' / \phi = \varphi'' / \varphi . \label{eq:equal ricci}
\end{equation}
The mean curvature of the $t$-slice by \eqref{eq doubly warped} and \eqref{eq H h} is
\begin{equation}
  H = \phi' / \phi + \varphi' / \varphi = (- \tfrac{1}{2} \gamma + 1) h.
  \label{eq:mean}
\end{equation}
By \eqref{eq:equal ricci} and \eqref{eq:mean}, 
\begin{align}\label{eq:difference}
    \left(\tfrac{\phi'}{\phi}-\tfrac{\varphi'}{\varphi}\right)'&=\tfrac{\phi''}{\phi}-\left(\tfrac{\phi'}{\phi}\right)^2-\tfrac{\varphi''}{\varphi}+\left(\tfrac{\varphi'}{\varphi}\right)^2\\
    &=\left(\tfrac{\varphi'}{\varphi}-\tfrac{\phi'}{\phi}\right)\left(\tfrac{\varphi'}{\varphi}+\tfrac{\phi'}{\phi}\right)\\
    &=-\left(\tfrac{\phi'}{\phi}-\tfrac{\varphi'}{\varphi}\right)(-\tfrac{1}{2}\gamma+1)h
\end{align}
Then,
\begin{align}
    \tfrac{\phi'}{\phi}-\tfrac{\varphi'}{\varphi}&=2 \beta\left(\cos(\sqrt{\Lambda(1-\tfrac{\gamma}{4})}t)\right)^{-\tfrac{1-\gamma/2}{1-\gamma/4}}
\end{align}
for some constant \( \beta \).
We have by the above and \eqref{eq:mean} that
\begin{align}
    \tfrac{\phi'}{\phi}&=\tfrac{\phi'(0)}{\phi (0)}\left(\cos(\sqrt{\Lambda(1-\tfrac{\gamma}{4})}t)\right)^{-\tfrac{1-\gamma/2}{1-\gamma/4}}+\tfrac{1}{2}(- \tfrac{1}{2} \gamma + 1) h,\\
    \tfrac{\varphi'}{\varphi}&=\tfrac{\varphi'(0)}{\varphi (0)}\left(\cos(\sqrt{\Lambda(1-\tfrac{\gamma}{4})}t)\right)^{-\tfrac{1-\gamma/2}{1-\gamma/4}}+\tfrac{1}{2}(- \tfrac{1}{2} \gamma + 1) h,\\
 \beta = \tfrac{\phi'(0)}{\phi (0)} &=  -\tfrac{\varphi'(0)}{\varphi(0)}, \\
    \left(\log(\tfrac{\phi}{\varphi})\right)'&=2\tfrac{\phi'(0)}{\phi (0)}\left(\cos(\sqrt{\Lambda(1-\tfrac{\gamma}{4})}t)\right)^{-\tfrac{1-\gamma/2}{1-\gamma/4}}.
\end{align}
Then
\[\tfrac{\phi}{\varphi}=\tfrac{\phi(0)}{\varphi(0)}e^{2 \frac{\phi'(0)}{\phi(0)}  \int_{0}^{t} \left(\cos(\sqrt{\Lambda(1-\tfrac{\gamma}{4})}s)\right)^{-\tfrac{1-\gamma/2}{1-\gamma/4}}ds}.\]
We also have,
\[\phi\varphi=\phi(0)\varphi(0)\left(\cos(\sqrt{\Lambda(1-\tfrac{\gamma}{4})}t)\right)^{\tfrac{1-\gamma/2}{1-\gamma/4}}\]
from \eqref{eq:mean}.
From here, we easily obtain \eqref{eq phi} and \eqref{eq varphi}.
It remains to derive a consequence of the inequality in \eqref{eq ricci relation}. And this would put a restriction on \( \beta = \phi'(0) / \phi(0) \).

By choosing \( e_1 =   \phi(t)^{-1} \frac{\partial}{\partial s_1}\) and 
 \(e_2 =  \varphi(t)^{-1} \frac{\partial}{\partial s_2} \) in \eqref{eq ricci relation} again, and using \eqref{eq ric t}, \eqref{eq ric e1} and \eqref{eq ric e2}, we have
\[ \tfrac{\phi''}{\phi} = \tfrac{\varphi''}{\varphi} \geq \tfrac{\varphi'
   \phi'}{\varphi \phi} . \]
We express $\phi'' / \phi$ in terms of $\phi' / \phi$ and $\varphi' / \varphi$
as follows:
\[ \phi'' / \phi = \varphi'' / \varphi = \tfrac{1}{2} [(\tfrac{\phi'}{\phi} +
   \tfrac{\varphi'}{\varphi})' + (\tfrac{\phi'}{\phi})^2 +
   (\tfrac{\varphi'}{\varphi})^2] . \]
Since $\phi' / \phi + \varphi' / \varphi = 0$ at $t=0$, we can set
\begin{align}
  a  &= \beta \cos \left(\sqrt{\Lambda (1 - \gamma / 4)} t\right)^{- \tfrac{1 - \gamma / 2}{1
  - \gamma / 4}}, \\
  d & = \tfrac{1}{2} (- \tfrac{\gamma}{2} + 1) h, \\
 \phi' / \phi &= a + d, \text{ } \varphi' / \varphi = - a + d.
\end{align}
Then
\begin{align}
  0 \geq & \tfrac{\phi''}{\phi} - \tfrac{\varphi' \phi'}{\varphi \phi} \\
  = & \tfrac{1}{2} [(\tfrac{\phi'}{\phi} + \tfrac{\varphi'}{\varphi})' + (\tfrac{\phi'}{\phi})^2 + (\tfrac{\varphi'}{\varphi})^2] - \tfrac{\varphi' \phi'}{\varphi \phi} \\
  = & \tfrac{1}{2} (\tfrac{\phi'}{\phi} + \tfrac{\varphi'}{\varphi})' + \tfrac{1}{2} (a + d)^2 + \tfrac{1}{2} (- a + d)^2 - (a + d) (- a + d) \\
   = & \tfrac{1}{2} (- \tfrac{\gamma}{2} + 1) h' + 2a^2 \\
  = & - \tfrac{1}{2} (- \tfrac{\gamma}{2} + 1) \Lambda \cos (\sqrt{\Lambda (1
  - \tfrac{\gamma}{4})} t)^{- 2} \\
  & \quad + 2 \beta^2 \cos (\sqrt{\Lambda (1 - \tfrac{\gamma}{4})} t)^{- 2
  \tfrac{1 - \gamma / 2}{1 - \gamma / 4}}  
\end{align}
which gives the restriction \eqref{eq beta restriction}.
  \end{proof}

\section{Band width estimate with Ricci curvature bound} \label{sec ricci}

We are able to generalize the band width estimate \cite[Theorem 5.1]{zhu2021} to zero and negative Ricci curvature lower bound. See also \cite[Main Theorem C]{hirsch-rigid-2025} for a method using spacetime harmonic functions.

\begin{theorem}\label{Thm-Ricci-case}
  For each $\kappa\in \{-1,0,1\}$, we define \ $\eta = \eta_{\kappa}$ to be the function
  which satisfies
  \[ \eta' + \eta^2 + 4 \kappa = 0, \text{ } \eta' < 0. \]
  If $(M =[- 1, 1]\times T^2 , g)$ is a torical band with Ricci
  curvature bound $\ensuremath{\operatorname{Ric}} \geq 2 \kappa$, 
   and there exist constants $0 < t_- < t_+$ for $\kappa\in \{0,-1\}$ and $0 < t_- < t_+<\tfrac{\pi}{2}$ for $\kappa=1$ such that
  the $H_{\partial_{+}M}  \geq \eta (t_+)$ and $H_{\partial_{-} M} \leq 
  \eta (t_-)$, then
  \[ \ensuremath{\operatorname{width}}_{g}
     (M) \leq t_+ - t_- . \]
Here the mean curvature of $H_{\partial_{-}M}$ is computed about the unit inner normal vector and $H_{\partial_{+}M}$ is computed about the unit outer normal vector. 
  Equality is achieved if and only if $(M, g)$ is isometric to the models
  \( ([t_-,t_+]\times T^2,g) \) where \( g \) is given by
  \begin{align}
    g=
      \begin{cases}
          \mathrm{d} t^2 + \sin^{1 + c} t \cos^{1 - c} t \mathrm{d} s^2_1 + \sin^{1 -
   c} t \cos^{1 + c} t \mathrm{d} s^2_2& \kappa=1; \label{eq metric kappa 1}\\
          \mathrm{d} t^2 +  t^{1+c} \mathrm{d} s^2_1 +  t^{1-c} \mathrm{d} s^2_2& \kappa=0;\\
          \mathrm{d} t^2 + \sinh^{1 + c} t \cosh^{1 - c} t \mathrm{d} s^2_1 +
   \sinh^{1 - c} t \cosh^{1 + c} t \mathrm{d} s^2_2& \kappa=-1,
      \end{cases}
  \end{align}
  where $0\leq c\leq 1$.
\end{theorem}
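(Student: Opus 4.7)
The plan is to run the $\mu$-bubble method with $u \equiv 1$ (i.e.\ $\gamma = 0$), following the structure already in place for the spectral Ricci case but with a prescribing function $h$ tailored to $\eta_\kappa$. Assume $\operatorname{width}_g(M) > t_+ - t_-$. By \cite[Lemma 7.2]{cecchini-scalar-2024}, there is a smooth $\chi : M \to [t_-, t_+]$ with $\chi^{-1}(t_\mp) = \partial_\mp M$ and Lipschitz constant $L < 1$. Setting $h := \eta_\kappa \circ \chi$, the hypotheses $H_{\partial_-M} \leq \eta(t_-)$ and $H_{\partial_+M} \geq \eta(t_+)$ give $h|_{\partial_-M} \geq H_{\partial_-M}$ and $h|_{\partial_+M} \leq H_{\partial_+M}$; after a small perturbation of $h$ in the spirit of Lemma \ref{lm existence minimiser spectral ricci}, the strict inequalities required by Lemma \ref{lm existence mu bubble} hold, yielding a stable $\mu$-bubble $\Omega$ with smooth boundary $\Sigma$ of mean curvature $h$ and homologous to $T^2$. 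The standing conditions $t_+ < \pi/2$ for $\kappa = 1$ and $t_- > 0$ for $\kappa = 0$ are exactly what keep $\eta_\kappa(\chi)$ smooth and bounded on $M$.

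Next, I insert $\Sigma$ into the second variation (Lemma \ref{lem-second-variation} with $\gamma = 0$), apply the Schoen--Yau--Zhu rewrite \eqref{eq:zhu 5.2}, and use $\operatorname{Ric}(e_1,e_1)+\operatorname{Ric}(e_2,e_2)\geq 4\kappa$. Testing $\psi \equiv 1$ and invoking Gauss--Bonnet on $\Sigma$ gives
\[ 0 \leq 4\pi \chi(\Sigma) - \int_\Sigma (h^2 + h_\nu + 4\kappa). \]
Since $\eta' < 0$, $|\langle \nabla\chi,\nu\rangle| \leq L < 1$, and $\eta^2 + \eta' + 4\kappa = 0$, one has pointwise on $\Sigma$
\[ h^2 + h_\nu + 4\kappa \geq \eta(\chi)^2 + L\,\eta'(\chi) + 4\kappa = (1-L)\,|\eta'(\chi)| > 0, \]
which, combined with $\chi(\Sigma) \leq 0$ (as $\Sigma$ has genus at least one), contradicts stability. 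This proves the width bound.

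For the rigidity, I would take $\operatorname{width}_g(M) = t_+ - t_-$ and argue in the limit $L \nearrow 1$, where $\chi$ essentially becomes the signed distance to $\partial_- M$ shifted by $t_-$. All the preceding inequalities must saturate on the limit $\mu$-bubble $\Sigma$, producing infinitesimal rigidity analogous to Definition \ref{def inf rig spec ric}: $\operatorname{Sc}_\Sigma = 0$, the traceless part $A^0$ of the second fundamental form vanishes, $\operatorname{Ric}(e_1,e_1) = \operatorname{Ric}(e_2,e_2) = 2\kappa$, $\nabla\chi = \nu$, and $H = \eta_\kappa(\chi)$. Exactly as in Lemma \ref{Lem-foliation-construct} the inverse-function theorem produces a CMC foliation $\{\Sigma_t\}$ near $\Sigma$, rigidity propagates to every leaf via a monotonicity formula modeled on Proposition \ref{Prop-monotonic-formula}, and by connectedness the metric on all of $M$ splits as $dt^2 + \phi(t)^2 ds_1^2 + \varphi(t)^2 ds_2^2$ with $\phi'/\phi + \varphi'/\varphi = \eta_\kappa$ and $\phi''/\phi = \varphi''/\varphi$. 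As in Appendix \ref{subsec-determine-the-metric}, solving the decoupled equations for $\log(\phi/\varphi)$ and for the product $\phi\varphi$ in closed form and enforcing positivity on $[t_-, t_+]$ picks out the stated one-parameter family parametrized by $c \in [0,1]$. The principal obstacle is this rigidity step: propagating infinitesimal rigidity from a single slice to the whole band requires a case-by-case monotonicity calculation in $\kappa$, and one must verify that the explicit warps remain positive and smooth on all of $[t_-,t_+]$ so that the boundary mean-curvature hypotheses force $c \in [0,1]$ and identify the rigid band with $[t_-, t_+] \times T^2$ carrying the stated model metric.
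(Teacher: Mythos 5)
Your width bound argument is in the right spirit and the stability computation with $\psi \equiv 1$, the Schoen--Yau--Zhu rewrite, and the estimate $h^2 + h_\nu + 4\kappa \geq (1-L)\lvert\eta'\rvert > 0$ are all correct. But there are two gaps. First, to invoke the existence Lemma~\ref{lm existence mu bubble} you need \emph{strict} barrier inequalities $h|_{\partial_-M} > H_{\partial_-M}$ and $h|_{\partial_+M} < H_{\partial_+M}$; the hypotheses only give non-strict ones, and the perturbation ``in the spirit of Lemma~\ref{lm existence minimiser spectral ricci}'' is not really analogous (there $\eta$ is made to blow up at interior hypersurfaces, whereas here $\eta$ stays bounded and the issue is at $\partial M$). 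The paper handles this with a case analysis on $\partial_\pm M$: either the mean curvature inequality is strict, or one can flow $\partial_\pm M$ inward by $-(H-h)\nu$ to obtain strict barriers $\Sigma_\pm$, or --- the genuinely delicate case --- $\partial_-M$ is already stable with $H - h \equiv 0$, and then one grows a foliation by leaves with $H - h = 0$ (via the monotonicity \eqref{eq:H tilde inequality}) until one either finds a leaf with $H - h < 0$ or the foliation covers all of $M$. Your proposal does not address this third case at all, and it is not removable: when $H_{\partial_-M}$ equals $\eta(t_-)$ pointwise, no small perturbation of $h$ produces a barrier, so the existence lemma has no input.

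Second, and more seriously, the rigidity step via ``$L \nearrow 1$'' is not an argument. When $\operatorname{width}_g(M) = t_+ - t_-$, no $\chi$ with $\operatorname{Lip}\chi < 1$ and $\chi(\partial_\pm M) = t_\pm$ exists, so there is nothing to take a limit over; you would need a priori curvature and area estimates on the family $\Sigma_L$ and a convergence result, none of which you establish. The paper instead proves rigidity \emph{first}, as Proposition~\ref{pp ricci case}: any $C^{2,\alpha}$ $h$-minimising boundary $\Sigma$ homologous to $\partial_-M$, with $h = \eta\circ d$ and $d$ the (Lipschitz, $|\nabla d|\le1$) shifted distance, already forces $(M,g)$ to be the doubly-warped model and in particular forces $H_{\partial_\pm M} = \eta\circ d$. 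The width bound is then deduced \emph{from} this rigidity by contradiction with the barriers $\Sigma_\pm$ (on which $H - h$ is strictly signed), and the equality case is exactly the third boundary case above, in which the $H-h=0$ foliation exhausts $M$. Your outline reverses this logical order, and the reversal is where the proof falls apart.
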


\begin{remark}
  The metric \eqref{eq metric kappa 1} for \( \kappa=1 \)
  is easily checked to be consistent with \cite[(2.8)]{hirsch-rigid-2025},
  while Zhu \cite{zhu2021} only gave the metric \eqref{eq metric kappa 1} with \( c=0,1 \) for \cite[Theorem 5.1]{zhu2021}.
  We can also obtain a band width estimate if we replace the Ricci curvature by the sectional curvature. 
  \end{remark}
  Now we set up the variational problem tailored to Theorem \ref{Thm-Ricci-case}.
We assume that $M$ lies in a slightly larger manifold and we move $\partial_-
M$ slightly outward and obtain a resulting band $M_1$. Let $\mathcal{C}_1$ to
be the collection of all Cacciopolli sets $\Omega$ such that $\Omega$ contains
a neighborhood of $\partial_- M_1$ and define the functional
\[ E (\Omega) =\mathcal{H}^{n - 1} (\partial^{\ast} \Omega \cap
   \ensuremath{\operatorname{int}}M_1) - \int_{\Omega} h \]
where $\partial^{\ast} \Omega$ denotes the reduced boundary of $\Omega$. Let
$\mathcal{C}$ to be the sub-collection of Cacciopolli sets $\Omega$ such that
$\partial^{\ast} \Omega \backslash \partial_- M$ is a subset in the closure of
$M$. We consider the minimisers of $E$ in the class $\mathcal{C}$. In Gromov's
convention {\cite[Section 1.6.4]{Gromov2023}}, a minimiser \( \Omega \)
of $E$ is called a minimising
$h$-bubble and we call \( \Sigma := \partial^{*} \Omega \backslash \partial_-M_1\) an \( h \)-minimising boundary.
\begin{proposition}
  \label{pp ricci case}Let \( \eta \) the function and
  $(M, g)$ be a band given in Theorem
  \ref{Thm-Ricci-case} and $d$ be a Lipschitz function such that
  \[ | \nabla d| \leq 1 \text{ in } M, \text{ and } d (\partial_{\pm} M)
     = t_{\pm} \text{ along } \partial_{\pm} M. \]
   Assume that $\Sigma$ is an  \( h \)-minimising \( C^{2,\alpha} \)-boundary (\( 0<\alpha<1 \)) 
  and homologous to $\partial_- M$ (see Remark \ref{rk stable cmp minimiser}). Here, $h=\eta \circ d$. Then $(M, g)$ is isometric to the models
  given in Theorem \ref{Thm-Ricci-case}. In particular, \( H_{\partial_{\pm }M} = \eta \circ d \) along \( \partial_{\pm} M \)
and the band width is equal
  to $t_+ - t_-$.
\end{proposition}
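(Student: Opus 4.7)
The plan follows the familiar three-part scheme—infinitesimal rigidity of $\Sigma$, local foliation, and global identification of the metric—now specialized to the unwarped setting ($u\equiv 1$, $\gamma=0$).

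\textbf{Step 1 (Infinitesimal rigidity of $\Sigma$).} As a stable minimising $h$-boundary, $\Sigma$ has mean curvature $H=h$ and satisfies the second variation inequality of Lemma \ref{lem-second-variation}. Taking $\psi\equiv 1$, applying Zhu's rewrite \eqref{eq:zhu 5.2}, and invoking Gauss--Bonnet yield
\begin{equation}
\int_\Sigma \left[\operatorname{Ric}(e_1,e_1)+\operatorname{Ric}(e_2,e_2)+h^2+h_\nu\right]\leq 4\pi\chi(\Sigma).
\end{equation}
The integrand is pointwise non-negative: $\operatorname{Ric}(e_i,e_i)\geq 2\kappa$ by hypothesis, and the ODE $\eta^2+\eta'+4\kappa=0$ together with $\eta'<0$ and $|\nabla d|\leq 1$ forces $h^2+h_\nu=\eta(d)^2+\eta'(d)\langle\nabla d,\nu\rangle \geq \eta(d)^2+\eta'(d)=-4\kappa$. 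Since $\Sigma$ is homologous to $T^2$, $\chi(\Sigma)\leq 0$, so every inequality collapses to equality on $\Sigma$: $\chi(\Sigma)=0$, $\operatorname{Sc}_\Sigma\equiv 0$, $\operatorname{Ric}(e_1,e_1)=\operatorname{Ric}(e_2,e_2)=2\kappa$, $\nabla d=\nu$, and $h=\eta(d)$ with $h^2+h_\nu+4\kappa=0$.

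\textbf{Step 2 (Foliation and global extension).} Using the rigidity identities above, the linearization of $H-h$ at $\Sigma$ reduces to $-\Delta_\Sigma$ (the same computation as in Lemma \ref{Lem-foliation-construct}, with the warp-dependent terms absent), so the inverse function theorem produces a local foliation $\{\Sigma_t\}_{|t|<\varepsilon}$ on which $H_t-h$ is constant. Mimicking Proposition \ref{Prop-monotonic-formula}, the quantity $\tilde H_t:=H_t-h$ satisfies a differential inequality $\tilde H_t'+\Psi(t)\tilde H_t\leq 0$; since $\tilde H_0=0$, this forces $\tilde H_t\leq 0$ for $t>0$ and $\tilde H_t\geq 0$ for $t<0$. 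Hence each $\Omega_t$ remains a minimiser of $E$, and each $\Sigma_t$ is infinitesimally rigid. The relation $\nabla d=\nu_t$ on every leaf identifies the foliation parameter $t$ with $d$, and connectedness of $M$ extends the foliation throughout $M$ with $t\in[t_-,t_+]$.

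\textbf{Step 3 (Metric identification and main obstacle).} The foliation gives $g=dt^2+g_t$ with $g_t$ a family of flat metrics on $T^2$. From $\operatorname{Ric}(e_1,e_1)=\operatorname{Ric}(e_2,e_2)$ and the contracted Bianchi identity, exactly as in Section \ref{sec spectral ricci}, every relevant curvature component depends only on $t$ and $R(e_1,\nu,\nu,e_2)=0$, so the Codazzi-type evolution $\partial_t A-A^2=-R(\cdot,\nu,\nu,\cdot)$ together with $\partial_t g_t=2A$ produces a diagonal form $g_t=\phi_1(t)^2 ds_1^2+\phi_2(t)^2 ds_2^2$. The warping factors satisfy $\phi_1''/\phi_1=\phi_2''/\phi_2$ and $\phi_1'/\phi_1+\phi_2'/\phi_2=H=\eta(t)$; integrating along the lines of Lemma \ref{lm calc spectral ricci} yields the three explicit doubly warped product models according to $\kappa\in\{-1,0,1\}$, with the free parameter $c\in[0,1]$ arising from the value of $\phi_1'/\phi_1-\phi_2'/\phi_2$ at a chosen reference leaf. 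Finally, $d(\partial_\pm M)=t_\pm$ forces $H_{\partial_\pm M}=\eta(t_\pm)$ and $\operatorname{width}_g(M)=t_+-t_-$. The principal obstacle is this last step: promoting the pointwise rigidity (equal tangential Ricci components, ODE-controlled $h$) to the explicit warped formulas requires using Bianchi and the Codazzi-type ODE to ensure the off-diagonal second fundamental form vanishes and the two warping factors decouple, after which the system integrates as in the appendix.
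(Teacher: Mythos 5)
Your proposal is correct and follows essentially the same route as the paper: the paper's proof of Proposition \ref{pp ricci case} simply invokes the machinery already established for Theorem \ref{ricci rigid reformulation} (infinitesimal rigidity of $\Sigma$ via Gauss--Bonnet and the stability inequality, the constant-$(H-h)$ foliation via the inverse function theorem, the monotonicity of $\tilde H_t$, and the doubly-warped metric calculation analogous to Lemma \ref{lm calc spectral ricci}), which is exactly the three-step scheme you spell out. One small remark: where the paper writes ``$\gamma=0$ and $\Lambda=2\kappa$'' you implicitly and correctly work with $\Lambda=4\kappa$ (the value compatible with both $2\operatorname{Ric}\geq\Lambda$ under $\operatorname{Ric}\geq 2\kappa$ and the ODE $\eta^2+\eta'+4\kappa=0$), and your step ``every inequality collapses to equality'' tacitly uses the first-eigenfunction argument (as in Lemma \ref{lm infi rig spec ric}) to upgrade $\int_\Sigma\operatorname{Sc}_\Sigma=0$ to $\operatorname{Sc}_\Sigma\equiv 0$ pointwise.
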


\begin{proof}
Some parts of the proof already appeared in \cite{zhu2021}.
Here, we make use of the arguments
already written down for Theorem \ref{ricci rigid reformulation}.
We set $h = \eta \circ d$, since $\Sigma$ is a stable surface of mean
curvature $\eta \circ d$, then $\Sigma$ satisfies the inequality
\eqref{stability for h} with
$\gamma = 0$ and $\Lambda = 2 \kappa$.
By following the proof of Theorem \ref{ricci rigid reformulation},
(note that we do not need Lemma \ref{lm existence minimiser spectral ricci} because we assumed apriori the existence of a stable surface) 
we see that $(M,g)$ is isometric to a doubly warped product in the form of \eqref{eq doubly warped} and \( H_{\partial_{\pm}M} = \eta(t_{\pm}) \). The rest is a calculation similar to Lemma \ref{lm calc spectral ricci}.
The mean curvature of the \( t \)-level set is
\begin{equation}
  H = h(t)= \phi'/\phi + \varphi'/\varphi ,
\end{equation}
which satisfies the ODE
\begin{equation}
   h^2 + h' +  4\kappa = 0. \label{eq h ode in ricci}
  \end{equation}
And let \( \{e_1, e_2\} \) be an arbitrary orthonormal frame of the \( t \)-level set, then
\begin{equation} 2\kappa= \operatorname{Ric}(e_1,e_1) = \operatorname{Ric}(e_2,e_2) \leq \operatorname{Ric}(\partial_t, \partial_t). \label{eq ricci relation non-spec}    \end{equation}
We calculate for \( \kappa =0 \) as an example and leave the calculations for the case \( \kappa=\pm 1 \).
When \( \kappa=0 \), and so
\[  h^2  + h' = 0, \text{ }h' < 0 \]
by \eqref{eq h ode in ricci}. This leads to
\begin{equation}
  h = \tfrac{1}{t+t_0} = \phi' / \phi + \varphi' / \varphi  \label{mean curvature}
\end{equation}
for some constant \( t_{0} \). We set \( t_0 =0 \) and adjust the range of \( t \) later.
Setting  \( e_1 =   \phi(t)^{-1} \frac{\partial}{\partial s_1}\) and 
 \(e_2 =  \varphi(t)^{-1} \frac{\partial}{\partial s_2} \) in $\ensuremath{\operatorname{Ric}} (e_1,e_{1}) =\ensuremath{\operatorname{Ric}}
(e_2,e_{2})=0$ and using \eqref{eq ric e1}, \eqref{eq ric e2} gives
\( \varphi''/\varphi=  \phi''/\phi  \). So
\begin{align}
    \left(\tfrac{\phi'}{\phi}-\tfrac{\varphi'}{\varphi}\right)'&=\tfrac{\phi''}{\phi}-\left(\tfrac{\phi'}{\phi}\right)^2-\tfrac{\varphi''}{\varphi}+\left(\tfrac{\varphi'}{\varphi}\right)^2\\
    &=\left(\tfrac{\varphi'}{\varphi}-\tfrac{\phi'}{\phi}\right)\left(\tfrac{\varphi'}{\varphi}+\tfrac{\phi'}{\phi}\right)\\
    &=-\left(\tfrac{\phi'}{\phi}-\tfrac{\varphi'}{\varphi}\right)h,
\end{align}
then
\begin{equation}
  \tfrac{\phi'}{\phi} - \tfrac{\varphi'}{\varphi} = c \tfrac{1}{t} .
\end{equation}
By symmetry of $\phi$ and $\varphi$, we set $c \geq 0$. So \( \phi' / \phi = \tfrac{1 + c}{2 t}, \varphi' / \varphi = \tfrac{1 - c}{2 t}
\), which gives
\[ \phi = c_1 t^{\tfrac{1 + c}{2}}, \varphi = c_2 t^{\tfrac{1 - c}{2}} \]
for some positive constants \( c_1 \) and \( c_2 \).
We can set \( c_1 =c_2=1 \) by
adjusting the length of the circles represented by  \( s_1 \) and \( s_2 \).
By \eqref{eq ric t}, $\ensuremath{\operatorname{Ric}} (\partial_t,\partial_{t}) = \tfrac{1 -
c^2}{2 t^2}$. Another condition that $\ensuremath{\operatorname{Ric}}
(\partial_t,\partial_{t}) \geq 0$ gives $c \leq 1$. So the metric takes
\[ \mathrm{d} t^2 + t^{1 + c} \mathrm{d} s^2_1 + t^{1 - c} \mathrm{d} s^2_2 .
\]
We see from the mean curvatures \( H_{\partial_{\pm}M} \) that \( t \in [t_-,t_+] \). In particular, the band width is \( t_+ - t_{-} \).
  \end{proof}
Now we use an argument inspired by \cite{EGM2021,andersson-area-2009} to show that there exists indeed an $h$-minimising boundary which would conclude the proof of Theorem \ref{Thm-Ricci-case}.
\begin{proof}[Proof of Theorem \ref{Thm-Ricci-case}]
 
We define $d = \min \{\ensuremath{\operatorname{dist}}_g (\partial_- M, x) +
t_-, t_+ \}$ and $h = \eta \circ d$.
We prove the band width estimate by contradiction: we assume that
\begin{equation}
  \ensuremath{\operatorname{width}}_{g} (M) > t_+ - t_- . \label{eq bw geq }
\end{equation}
Then
\begin{align}
  H_{\partial_+ M} \geq & \eta (t_+) = h (\partial_+ M), \\
  H_{\partial_- M} \leq & \eta (t_-) = h (\partial_- M) . 
\end{align}
There are several cases to consider:
(1) $H_{\partial_- M} \lneqq
h$ (2) or  \( \partial_-M \) satisfies
$H_{\partial_- M} - h \equiv 0$ but is not stable (3) or \( \partial_- M \) is stable of mean curvature \(h \). 

\text{{\bfseries{Claim}}}: $\partial_- M$ can be perturbed graphically to a
surface $\Sigma_-$ which satisfies
\[ H_{\Sigma_-} < h \text{ along } \Sigma_-  \]
if we are in cases (1) and (2).

In case (1), we can
run the following mean curvature type flow $F (t, \cdot) : S \to M$ with
\[ \partial_t F = - (H - h) \nu \]
starting from $\partial_- M$ for a very short time. Here, $S$ is a manifold
diffeomorphic to $\partial_- M$ serving as a coordinate space. By the strong
maximum principle, for all sufficiently small enough $t$, $H - h < 0$ (see \cite[Lemma 5.2]{andersson-area-2009}
and $h$ is Lipschitz, which is enough for their
argument). It suffices to put $\Sigma_- = F (t, \cdot)$ with a small $t$.

In case (2), when $H_{\partial_- M} - h = 0$ along
$\partial_- M$ but $\partial_-M$ is not stable. Let $u$ be the first eigenfunction of
\[ L = - \Delta_{\partial_{-}M} - (\ensuremath{\operatorname{Ric}}(\nu,\nu) + |A|^2) -
   \nabla_{\nu} h. \]
 We can choose \( u>0 \).
Then $L u < 0$. Let $X$ be a vector field such that $X = u \nu$ along
$\partial_- M$. The first variation of $H - h$ is $\delta_X (H - h) = L u<
0$. Hence, there exists a $\Sigma_-$ such that $H_{\Sigma_-} - h < 0$.

In case (3), when \( \partial_-M \) is stable with mean curvature \( h \).
First note that stability is sufficent for a construction of a local foliation \( \{ \Sigma_t^-\}_{t\in [0,\varepsilon]} \)
of constant \( H-h \)
near \( \partial_-M \) such that \( \Sigma_0 = \partial_- M \).
Let \( \tilde{H}(t) = H_{\Sigma_t} -h \), by the ODE \eqref{eq:H tilde inequality} established in Proposition \ref{Prop-monotonic-formula}, we see that \( \tilde{H}(t) \leq 0 \) for all \( t \in [0,\varepsilon] \). There are two cases as well: there exists some \( t\in [0,\varepsilon] \) such that \( \tilde{H} (t) <0 \); otherwise \( \tilde{H}(t) \equiv 0 \) on \( [0,\varepsilon] \) which implies that \( \Sigma_{\varepsilon} \) is stable (see Remark \ref{rk eigen implies stability}) and enables the foliation to extend beyond \( \Sigma_{\varepsilon} \).
Now we can assume that the foliation \( \{\Sigma_t^{-}\}_{t\in [0,\varepsilon]} \) is maximal in the sense that every leaf is of vanishing \( H-h \), and
if we extend the foliation
beyond \( \Sigma_{\varepsilon} \) (i) either there is a leaf with \( H-h <0 \) or (ii) \( \Sigma_{\varepsilon} \) touches $\partial_{+}M$, from which it follows by the strong maximum principle (see \cite[Proposition 2.4]{andersson-area-2009}) that \( \cup_{t\in [0,\varepsilon]}\Sigma_t^-=M\).
It is worth noting that $\varepsilon$ can be 0.
If 
\( \cup_{t\in [0,\varepsilon]}\Sigma_t^-=M\), we can show that the band width is \( t_+ - t_- \), which is a contradiction;
otherwise, there is a leaf with \( H-h <0 \) in the foliation extended beyond $\Sigma_{\varepsilon}$ and we take this leaf to be \( \Sigma_- \).
To sum up, there exists a \( \Sigma_- \) such that \( H_{\Sigma_-} -h <0 \).

We can argue similarly that there exists a perturbation \( \Sigma_+ \)
of $\partial_+ M$ such
that $H_{\Sigma_+} - h > 0$. (Observe that it is convenient to reverse the choice of normal, the sign of the mean curvature. And the roles of $\partial_{\pm}M$ are switched.)

Note that any two leaves from the maximal foliations $\{\Sigma_{t}^+\}$ and $\{\Sigma_{t}^-\}$ starting respectively from $\partial_+ M$ and $\partial_- M$ (we have omitted subscripts) cannot touch by the strong maximum principle.

Because of $\Sigma_{\pm}$, we can apply Theorem \ref{lm existence mu bubble}, and there exists an $h$-minimising
boundary $\Sigma$. (We remark that $\Sigma$ is $C^{2,\alpha}$ by \cite[Theorem 4.4]{duzaar-existence-1993} and sufficient for our purpose.) By Proposition \ref{pp ricci case}, \( H_{\Sigma_{\pm}} - h =0 \) along \( \Sigma_{\pm } \) which contradicts the construction of \( \Sigma_{\pm } \).
So we conclude the proof of \( \operatorname{width}_{g}(M) \le t_+ -t_- \).

Now we turn to the rigidity case. If $\ensuremath{\operatorname{width}}_{g} (M)
= t_+ - t_-$, then by the above proof,
there is only one possibility such that \( \partial_-M  \) is stable with \( H_{\partial_-M} -h =0 \),
each leaf of the foliation starting from
\( \partial_-M \) has vanishing \( H-h \) and the foliation covers the whole of \( M \).
And this finishes the proof.
 \end{proof}
 \begin{remark}\label{rk eigen implies stability}
   In case (3), we have used the fact: if \( \{\Sigma_{t}\}_{t \in [0,\varepsilon]} \) is a foliation such that every leaf is of vanishing \( H - h  \), then every leaf \( \Sigma_t \) is stable. This can be see from the first variation of the \( H-h \) is vanishing. So
   \[ L_{\Sigma_t} u_t := -\Delta_{\Sigma_{t}} u_t-[ (\operatorname{Ric} (\nu_{t},\nu_{t}) +|A_{\Sigma_t}|^2) +\nabla_{\nu_t} h]u_t =0\]
   for \( u_t = \langle \partial_t, \nu_t \rangle >0 \) meaning that the first eigenvalue of \( L_{\Sigma_t} \)
   is zero, hence every \( \Sigma_t \) satisfies the stability.
\end{remark}

\bibliographystyle{alpha}
\bibliography{spectral-ricci}
\end{document}